\documentclass[10pt,reqno]{amsart}  
\usepackage{amsmath,amsfonts,amssymb,graphicx,amscd,amsthm,amsbsy,epsf}
\usepackage{comment}
\usepackage[ps2pdf=true]{hyperref}
         

\newtheorem{Lemma}{Lemma}[section]
\newtheorem{Prop}[Lemma]{Proposition}
\newtheorem{Thm}[Lemma]{Theorem}

\newcommand{\Real}{\mbox{${\mathbb R}$}}

\newcommand{\Sfrac}[2]{\mbox{\small$\frac{#1}{#2}$\normalsize}}            
\newcommand{\Half}{\Sfrac{1}{2}}
\newcommand{\sech}{\mbox{$\mathrm{sech}$}}

\newcommand{\BigOh}[1]{\mathcal{O}(#1)}
\newcommand{\LittleOh}[1]{\it{o}(#1)}

\begin{document}
\title[Water waves  over random topography]
{Long wave expansions for water waves  over random topography}
\author[A. de Bouard, W. Craig, O. D\'{\i}az--Espinosa, P. Guyenne,
C. Sulem]{Anne de Bouard$^1$,
 Walter Craig$^2$,
 Oliver  D\'{\i}az-Espinosa$^2$,
 Philippe Guyenne$^3$
 and  Catherine Sulem$^4$}

\address{$^1$Centre de Math\'ematiques Appliqu\'ees, Ecole Polytechnique,
 Route de Saclay, 91128 Palaiseau Cedex France}
\address{$^2$ Department of Mathematics and Statistics,  McMaster University,
 1280 Main St. West, Hamilton, ON L8S4K1, Canada}
\address{$^3$ Department of Mathematics, University of Delaware, 501 Ewing Hall
 Newark, DE 19716-2553,  USA}
\address{$^4$ Department of Mathematics,  University of Toronto, 
40 St George St, Toronto, ON M5S2E4, Canada}
\begin{abstract}
In this paper, we study the motion of the 
free surface of a body of fluid  over a variable bottom,
in a long wave asymptotic regime. We assume that the
 bottom of the fluid region can be  described by a stationary random 
process $\beta(x, \omega)$ whose variations take place on  short 
length scales and which are decorrelated on the length scale of 
the long waves. This is a  question of homogenization theory in 
the scaling regime for the Boussinesq and KdV equations.
 
The analysis is performed from the point of view of perturbation 
theory for Hamiltonian PDEs with a small parameter,  in the context
of which we perform a careful analysis of the distributional 
convergence of stationary mixing random processes.
We show in particular that the problem does not fully homogenize,
and that the random effects are as important as dispersive and
nonlinear phenomena in the scaling regime that is studied.
Our principal result is the derivation of effective equations for
surface water waves in the long wave small amplitude regime, and  
a consistency analysis of  these equations, which are not 
necessarily Hamiltonian PDEs. In this analysis we compute the 
effects of random modulation of solutions, and give an explicit 
expression for the scattered component of the solution
due to waves interacting with the random bottom. We  show
that the resulting influence of the random  topography is 
expressed in terms of a canonical process, which is equivalent to a
white noise through Donsker's invariance principle, with one
free parameter being the variance of the random process $\beta$.
This work is a reappraisal of the paper by Rosales \& Papanicolaou~\cite{RP83}
and its extension to general stationary mixing processes.
\end{abstract}
\bigskip\bigskip\bigskip
\noindent
\subjclass[2000]{ 76B15, 35Q53, 76M50, 60F17}
\keywords{Water waves,  random  topography, long wave asymptotics}

\maketitle


\section{Introduction}
\label{Section:Introduction}

The problem of surface water waves over an uneven bottom
 is a classical problem of fluid mechanics, and it is relevant
 to coastal engineering and ocean wave dynamics.   
In this paper, we investigate how the presence of bottom topography 
affects the  equations describing the limit of solutions in the
 long wave regime. We assume that the bottom is modeled by a stationary 
random process which is mixing, whose variations and  whose 
correlation length manifest themselves on  length scales that are 
short  compared to
the scale of the surface waves. 
 In a previous work \cite{CGNS05}, we addressed the long wave limit
 of surface waves over a bottom which has periodic variations over 
short scales, in which we proved that the problem fully homogenizes.
That is to say, 
 the free surface motion can be described by
a partial differential equation with constant {\it effective}    
coefficients, where the dependency over short scales
is manifested by coefficients which are ensemble averages.
 Here in contrast, we show that {\it random,
realization-dependent} effects are retained in the  description of the 
solution. The latter paper and
the present one are reappraisals and extensions of an earlier 
work by Rosales \& Papanicolaou \cite{RP83} who address the problem through 
different methods.

Our approach uses  a formulation in terms of  perturbation
theory for Hamiltonian partial differential equations, coupled
with a detailed analysis of stationary ergodic processes which
have mixing properties and which are considered as
tempered distributions.  As a first result we give an
appropriate form of the Boussinesq equations. Secondly,
following a series of  changes of variables, we derive a
system of  coupled KdV-like equations for the
two components of the solution;  these describe a
wave propagating predominantly to the right,
and a `small' scattered  wave propagating to the left.
We then extract a limiting system of two effective
equations through a consistency analysis. Specifically,
we solve the effective system, which is composed of an
equation similar to the KdV for the wave propagating to the right
with a random component to its velocity, and a scattered wave
propagating to the left. We give explicit formulas for the dominant
contributions and the first corrections to this solution, quantifying
the effects of the random modulation of position and amplitude.
 From these expressions, we compute {\it a posteriori} all the terms
that have been neglected in the effective system, and prove that
they are indeed of higher order. This evaluation relies on scale
separation lemmas, which in turn follow from Donsker's invariance
principle. Our analysis improves upon \cite{RP83} in several ways.
In particular we identify the canonical limiting distributions which
contribute to the random asymptotic behavior of solutions, we
quantify both random phase and random amplitude variations
of solutions, and in addition, we extend the long-wave analysis
over random topography to general stationary mixing processes.

The asymptotic system of equations that results from this analysis 
consists of  a KdV equation with an additional linear term, and a transport
equation for the scattered component driven by an  inhomogeneous
forcing term. The additional nonzero linear term,
which  either stabilizes or destabilizes solutions
depending upon the sign of its coefficient, in turn depends on the 
statistics of the bottom variations. The presence of this term 
 is the consequence of a subtle
calculation, and to our knowledge, it has not been previously observed.
In case these statistics are spatially reversible, the relevant 
 coefficient vanishes and the 
equation reduces to the usual KdV. 

There has been a lot of interest in wave motion in basins with 
non constant bathymetry, due to its hydrodynamic importance. Recent
references to the theory of linear waves include the papers of 
Nachbin (1995) \cite{N95}, S\o lna \& Papanicolaou (2000)~\cite{SP00}, 
Nachbin \& S\o lna (2003)~ \cite{NS03} which discuss the theory 
of linear transport in a random medium.
The earlier work of Howe (1971) \cite{H71} and the paper of 
Rosales \& Papanicolaou (1983) \cite{RP83} give an asymptotic analysis of 
nonlinear equations of water waves.
Nonlinear problems over variable topography are addressed in 
 Nachbin (2003) \cite{N03} and Artiles \& Nachbin (2004) \cite{AN04}.
More recent 
contributions which take into account the combined effect of randomness
and nonlinearity  include the series of papers by Mei \& Hancock (2003)
 \cite{MH03}
and Grataloup and Mei (2003) \cite{GM03} on the modulational scaling regime, 
and its extensions to the three dimensional case in 
Pihl,  Mei \& Hancock \ (2002)~\cite{PMH02}. 
This work focuses on the temporal behavior of  ensemble averages 
of solutions, giving the result  that they satisfy a nonlinear
Schr\"odinger equation with an additional dissipative term. 
The analog of this picture in the 
long wave scaling regime appears in Mei \& Li (2004) 
 \cite{ML04}, where the bottom
is assumed random but varies on the same spatial scale as the surface waves.

There is a history of rigorous analysis of the initial value problem
and limiting equations in the long wave asymptotic regime of the
water wave problem. Most of  this work concerns the case of
fluid domains with a flat bottom. The papers that address the
KdV limit include Kano \& Nishida (1986) \cite{KN86},
Craig (1985) \cite{C85}, Schneider \& Wayne (2000)
\cite{SW00}, Wright (2005) \cite{W05}
and Bona, Colin \& Lannes (2005) \cite{BCL05}.  A recent
paper which addresses specifically the Boussinesq scaling limit
of the problem on a rigorous basis, and categorizes the
well-posed possible limits is Bona, Chen \& Saut  (2002) \cite{BCS02}.
There has been several papers giving a rigorous analysis of the
initial value problem of water waves over a variable bottom,
including Yosihara (1983) \cite{Y83} on the two-dimensional
problem and Alvarez-Samaniego \& Lannes (2006) \cite{AL07}
on the two and three-dimensional problems, and a recent paper by
Chazel (2007) \cite{C07}. The  paper  \cite{AL07}
considers the issue of convergence in various scaling regimes
governed by long wave models. These results are in the context
of a deterministic problem, with a small amplitude bottom
perturbation, varying spatially on the same scale as the
waves in the surface. As far as we know, there are no current
rigorous analytic results for the KdV or Boussinesq scaling
regimes in which the  bottom variations occur on a short
length scale, and are averaged under the nonlinear evolution
of water waves.

The paper is organized as follows. 
Section~2 describes the problem of water waves in its 
Hamiltonian form, the Dirichlet-Neumann operator in the presence
of a variable bottom, and the spatial scaling
regime appropriate for the long wave problem.
Section~3 presents
the setting of stationary ergodic and mixing processes in which we
work,  and gives the relevant scale separation lemmas.
This is the key  of the paper.
It furthermore gives an analysis of the natural 
regularization of characteristic coordinates that are applied 
to the KdV scaling limit. The Boussinesq regime is presented
in Section~4, while the  more detailed KdV regime is taken upon in Section~5.
The main issue of this analysis  is that the  scattering of waves by
the bottom variations is strong and it must be shown that the 
standard KdV Ansatz of unidirectional
propagation remains valid despite this. The consistency analysis of the 
resulting asymptotic system of equation is the most detailed part of this
paper. Finally, Section 6 presents some remarks  
on the process of  ensemble averaging.


\section{ Hamiltonian formulation}
\setcounter{equation}{0}
\subsection{Hamilton equations}
The time-dependent fluid domain consists of the region 
$S(\beta, \eta) = \{ (x,y)\in \Real^{n-1}\times\Real: 
-h + \beta(x) < y < \eta(x,t) \}$, in which
the fluid velocity is represented by the gradient of a velocity potential,
\begin{equation}\label{Eq:Laplace'sEquation}
   u = \nabla \varphi \, , \qquad \Delta \varphi = 0 .
\end{equation} 
The dependent variable  $\eta(x,t)$ denotes the surface elevation, 
and $\beta(x)$ 
denotes the variation of the bottom of the fluid domain from its 
mean value. The bottom  variations are chosen from a statistical ensemble
$(\Omega, \mathcal{M}, \sf{P})$, which is indicated by the notation 
$\beta = \beta(x, \omega)$. The details of the ensemble and the
associated probabilistic properties are described in 
Section~\ref{Subsection:StationaryProcesses}.

On the bottom boundary $\{ y = -h + \beta(x) \}$, the velocity potential
obeys Neumann boundary conditions
\begin{equation}\label{Eq:NeumannBottom}
   \nabla \varphi \cdot N(\beta) = 0
\end{equation}
where $N(\beta) = (1 + |\partial_x\beta|^2)^{-1/2} (\partial_x \beta, -1)$
is the exterior unit normal. 

The top boundary conditions are the usual
kinematic and Bernoulli conditions imposed on $\{ (x,y) : y = \eta(x,t) \}$, 
namely 
\begin{equation}\label{Eq:Kinematic+Bernoulli}
\partial_t \eta  =  
        \partial_y \varphi - \partial_x \eta \cdot \partial_x\varphi , 
     \; \;\;\;
  \partial_t \varphi  =  -g\eta -\Half |\nabla\varphi|^2 ~. 
\end{equation}

The asymptotic analysis in this paper is initiated  from the point of view of 
the perturbation theory of a Hamiltonian system with respect to a 
small parameter. For this purpose we describe the
water wave problem  as a Hamiltonian system with infinitely many degrees 
of freedom.
In \cite{Z68}, Zakharov  poses the equations 
of evolution
 \eqref{Eq:Laplace'sEquation}\eqref{Eq:NeumannBottom}\eqref{Eq:Kinematic+Bernoulli} 
in the form of a Hamiltonian system in the canonical variables 
$(\eta(x), \xi(x))$ where one defines
$\xi(x) = \varphi(x, \eta(x))$, the boundary values of the velocity potential 
on the free surface. 
 The evolution equations take the classical form
\begin{equation}\label{Eq:HamiltonsEquations}
   \partial_t  \begin{pmatrix} \eta \\ \xi 
               \end{pmatrix} 
   = \begin{pmatrix} 0 & I \\ -I & 0
     \end{pmatrix}
     \begin{pmatrix} \delta_\eta H  \\ \delta_\xi H 
     \end{pmatrix}
   = J \, \delta H
\end{equation}
with the Hamiltonian functional given by the expression
of the total energy
\begin{eqnarray}
   H & = & \int \int_{-h+\beta(x)}^{\eta(x)} 
         \frac{1}{2} |\nabla \varphi(x,y)|^2 \, dy dx 
       + \int \frac{g}{2} \eta^2(x) \, dx \nonumber  \\
     & = & \int \frac{1}{2} \xi(x) G(\beta, \eta) \xi(x) \, dx 
       + \int \frac{g}{2} \eta^2(x) \, dx ~.
\end{eqnarray}

The Dirichlet-Neumann operator $G(\beta, \eta)$ is the singular integral
operator with  which one  expresses the normal derivative of the velocity potential
on the free surface. It is a function of the boundary values $\xi(x)$ and of
the domain itself, as parameterized by  $\beta(x)$ and
$\eta(x)$ ,which define respectively the lower and the upper boundaries of the fluid
domain $S(\beta, \eta)$. That is, let $\varphi(x,y)$ satisfy the boundary value problem
\begin{eqnarray}\label{Eq:LaplacesEquationInS}
   \Delta \varphi & = & 0 \ \ \hbox{\rm in} \quad S(\beta, \eta) ~, \\
   \nabla \varphi \cdot N(\beta) & = & 0 \ \ \hbox{\rm on the bottom boundary} 
     \quad \{ y = -h + \beta(x) \} ~, \nonumber   \\ 
   \varphi(x, \eta(x)) & = & \xi(x) \ \ \hbox{\rm on the free surface} \quad
     \{ y = \eta(x) \} ~.  \nonumber 
\end{eqnarray}
The Dirichlet-Neumann operator is expressed as follows
\begin{equation}\label{Eq:DirNeuOp}
   G(\beta, \eta)\xi(x) 
     = \nabla \varphi(x,\eta(x)) \cdot N(\eta) 
       (1 + |\partial_x\eta|^2)^{1/2},
\end{equation}
where $N(\eta)$ is the exterior unit normal on the free surface.
It is clearly a linear operator in $\xi$ and it is self-adjoint with this
normalization.  However it is nonlinear with 
explicitly nonlocal behavior in  $\beta(x)$ and 
$\eta(x)$.
 The form of this operator, and its description in terms
of  $\beta$ and $\eta$ are given in the next section.


\subsection{Description of $G(\beta, \eta)$}

We now restrict consideration to the dimension $n=2$.
In the undisturbed case in which the bottom is flat, 
 the solution is formally given by a 
Fourier multiplier operator in the $x$-variable. Using the notation 
that $\partial_x = iD$; 

\begin{equation}\label{Eq:FourMult}
   \varphi(x,y) =  \int\int e^{i k  (x-x')}
      \frac{\cosh(k(y+h))}{\cosh(kh)} \xi(x') \, dx' dk 
   =  \frac{\cosh((y+h)D)}{\cosh(hD)} \xi(x) ~. 
\end{equation}
When the bottom topography is nontrivial, as represented by
$\{ y = -h + \beta(x) \}$, the expression \eqref{Eq:FourMult} 
is modified by adding a second term in order that the solution 
satisfies the bottom boundary conditions
\begin{equation}\label{Eq:HarmonicFunctionsInS}
   \varphi(x,y) = \frac{\cosh((y+h)D)}{\cosh(hD)} \xi(x) 
    + \sinh(yD) (L(\beta) \xi )(x) ~.
\end{equation}
The first term in \eqref{Eq:HarmonicFunctionsInS} satisfies the homogeneous
Neumann condition at $y = -h$ while the second term satisfies the homogeneous
Dirichlet condition at $y = 0$.
The operator $L(\beta)$ in the second term acts on the 
boundary data $\xi(x)$ given on the free surface. In \cite{CGNS05} 
we analyzed  $L(\beta)$ in a nonperturbative 
case, where $|\beta|_{C^1} \sim \BigOh{1}$. Here we are restricted
to a perturbative regime, where we describe the expansion of the 
operator $G(\beta, \eta)$ for small but arbitrary perturbations 
$\eta(x)$ of the surface, and small bottom variations $\beta(x)$.

At order $\BigOh{1}$ and $\BigOh{\eta}$, one gets 
$G^{(0)}  = D\tanh (hD) +D L(\beta)$ and 
$G^{(1)} =  D \eta D - G^{(0)} \eta G^{(0)}$. 
At higher order, one finds  the same 
recursion formula for $G^{(l)}$ as for the case of a flat bottom \cite{CS93} 
except that the role of the operator $G_0 = D \tanh(hD)$ is now 
replaced by $G^{(0)}$. 

Since we allow bottom perturbations to be of order $\BigOh{\varepsilon}$,
we will use  a recursion formula given in \cite{CGNS05}
for $L(\beta)$ in powers of $\beta$.
\begin{equation}
L(\beta)=L_1(\beta)+ L_2(\beta)+....
\end{equation}
with the first terms being
\begin{eqnarray}
L_1(\beta)&=& -\sech(hD) \beta \sech(hD) D 
\label{L1}\\
L_2(\beta) &=& \sech(hD) \beta D \sinh(hD) L_1 \cr
&=& -\sech(hD) \beta D \tanh(hD)\beta D \sech(hD).
\label{L2}
\end{eqnarray}
General formulas are presented in \cite{CGNS05} together with 
 a Taylor expansion of the Dirichlet-Neumann operator $G(\beta,\eta)$
in powers of both $\beta$ and $\eta$. In the analysis of the present 
paper, we will need  only the
terms up to second order in $\beta$.

The Hamiltonian is thus expanded in powers of $\eta$ and $\beta$ in
the form
\begin{eqnarray}
   H(\eta,\xi;\beta) = \frac{1}{2} \int (\xi D \tanh(hD) \xi + g \eta^2) dx \cr 
 \quad
  - \frac{1}{2} \int \xi D \sech(hD) \beta D \sech(hD) \xi dx \cr
  \quad
   + \frac{1}{2} \int  \xi (D\eta D -D \tanh(hD) \eta D \tanh(hD) ) \xi dx \cr 
  \quad
  -\frac{1}{2} \int \xi( D\sech(hD) \beta D \tanh(hD)\beta D \sech(hD) ) \xi dx
    \cr
\quad
    + \BigOh{\beta^3 \xi^2 } + \BigOh{\eta \beta \xi^2} +
    \BigOh{\eta^2 \xi^2} ~.
\end{eqnarray}
By integration by parts, 
\begin{eqnarray}
  && H(\eta,\xi;\beta) 
    =  \frac{1}{2} \int (\xi D \tanh(hD) \xi + g \eta^2) dx  
    - \frac{1}{2} \int  \beta |D \sech(hD)\xi|^2   dx \cr
  && \quad
    + \frac{1}{2} \int  \xi (D\eta D -D \tanh(hD) \eta D \tanh(hD) ) \xi dx \cr 
  && \quad
    -\frac{1}{2} \int \overline{(D\sech(hD) \xi )} \beta D \tanh(hD)\beta D \sech(hD) \xi dx
    \cr
  && \quad
    + \BigOh{\beta^3 \xi^2} + \BigOh{\eta \beta \xi^2} + \BigOh{\eta^2 \xi^2}~,
\end{eqnarray}
which is the starting point for our asymptotic expansion.

\subsection{Spatial scaling and the scaled Hamiltonian}

We consider the case in which the bottom varies on a short 
length scale, that is $\beta = \beta(x,\omega)$ is a random 
process, of zero mean value that satisfies  ergodicity and 
mixing properties which will be detailed below. 

The fundamental long wave scaling for the problem of surface water 
waves retains a balance between linear dispersive and nonlinear 
effects in the dynamics of the surface evolution. The  
scaling that anticipates this balance is through the transformation 
\begin{equation}\label{BouScal}
   X = \varepsilon x, \quad  \xi(x)= \varepsilon \tilde\xi(X), \quad 
 \eta(x)=  \varepsilon^2 \tilde\eta(X).
\end{equation}
As for the bottom, we assume its variations 
are of order $\BigOh{\varepsilon}$, which are much larger that the variations
of the surface elevation, namely
\begin{equation}
  \beta(x,\omega) = \varepsilon \tilde\beta(x,\omega).
\end{equation}
We assume that $\tilde \beta$ is bounded in $C^1$ for almost every realization
$\omega \in \Omega$.

In order to get the scaled Hamiltonian, we need to examine
the asymptotic expansion  of the Dirichlet-Neumann operator $G(\beta, \eta)$
in a multiple scale regime.
We  recall how formally a pseudo-differential operator  acts
on a multiple scale function $f(x,X)$ where $X= \varepsilon x$
(see \cite{CSS92} for details). In particular 
let $m(D)$ be a Fourier multiplier operator acting on a function $f$,
defined as 
\begin{equation}
(m(D) f)(x) =  \frac{1}{2\pi} \int e^{ik(x-y)} m(k) f(y) dy dk.
\end{equation}
When $m(D)$ acts on a multiple scale function $f(x,X)$ with $X=\varepsilon x$,
$D$ is replaced by $D_x +\varepsilon D_X$ and 
\begin{eqnarray}
m(D) f(x,X) &=&  \frac{1}{2\pi}  \int e^{ik(x-y)}
\Big(\sum_{j=0}^\infty \frac{m^{(j)}(k)}{j!}\varepsilon^j D^j_X \Big)
 f(y,X) dy dk\cr
&=& m(D_x) f + \varepsilon m'(D_x) D_X f +\cdots 
\end{eqnarray}
Applying this to the scaled Hamiltonian, we get 
\begin{eqnarray}
H(\tilde\eta,\tilde\xi; \tilde\beta, \varepsilon) 
 = 
\frac{\varepsilon^3}{2} \int ( h \tilde\xi D_X^2 \tilde\xi +g \tilde\eta^2)
  dX  \label{schamil} \\
- \frac{\varepsilon^4}{2} \int \tilde\beta(x) 
| D_X \sech(\varepsilon h D_X)\tilde\xi|^2  dX  
+\frac{\varepsilon^5}{2}\int 
 \tilde\xi (D_X \tilde\eta D_X \tilde\xi-
 \frac{h^3}{3} D_X^4 \tilde\xi ) 
dX  \cr
- \frac{\varepsilon^5}{2}\int \overline{
(D_X\sech(\varepsilon h D_X) \tilde\xi)} 
\cr
 \quad 
\left[\tilde\beta(x)
 (D_x+\varepsilon D_X)\tanh(h (D_x+\varepsilon D_X))\tilde\beta(x)
 D_X \sech(\varepsilon h D_X) \tilde\xi\right] dX \cr
 +o(\varepsilon^5)  \nonumber 
\end{eqnarray}
For simplicity of notation, we now drop the tildes over $\beta, \eta, \xi$.
Expanding the operator $\sech(\varepsilon hD_X)$ in the
 second term in (\ref{schamil}) gives
\begin{equation}
 \int \beta(x) 
~| D_X \sech(\varepsilon h D_X)\xi|^2  dX 
= \int \beta(\frac{X}{\varepsilon})  
 ~\left| D_X(1 - \frac{1}{2}\varepsilon^2 h^2 D_X^2)\xi\right|^2 dX ~.
\end{equation}
The last term of (\ref{schamil}) is a little more complicated but 
is calculated in the same manner.
Expanding $(D_x+\varepsilon D_X) \tanh(h(D_x+\varepsilon D_X))$
we get
\begin{equation}
  (D_x+\varepsilon D_X) \tanh(h(D_x+\varepsilon D_X)) 
    = D_x\tanh(h D_x)  + {\mathcal O}(\varepsilon)~.
\end{equation}
Finally, 
\begin{equation}
\begin{array}{l}
\int \overline{D_X \sech(\varepsilon h D_X) \xi} 
\left[\beta(x)
 (D_x+\varepsilon D_X)\tanh(h (D_x+\varepsilon D_X))\beta(x)
 D_X \sech(\varepsilon h D_X) \xi\right] dX   \\
\noalign{\vskip7pt}
\quad= \int \overline{D_X\sech(\varepsilon h D_X) \xi} 
[\beta(x) D_x \tanh(hD_x)  \beta(x) ] D_X\sech(\varepsilon h D_X) \xi
dX +{\mathcal O}(\varepsilon)  \\
\noalign{\vskip7pt}
\quad = \int [\beta(x) D_x \tanh(hD_x)  \beta(x)] ~|D_X \xi|^2 dX 
+{\mathcal O}(\varepsilon) ~.
\end{array}
\end{equation} 
Putting all these terms together:
\begin{equation}
\begin{array}{l}
H(\eta,\xi;\beta,\varepsilon)  = 
 \displaystyle{   \frac{\varepsilon^3}{2} \int} \Big[\Big(
 h -\varepsilon \beta(x)-\varepsilon^2 
\beta(x) D_x\tanh(hD_x)\beta(x) \Big) |D_X\xi|^2  +g \eta^2 
\\
\noalign{\vskip7pt}
\quad  +\frac{\varepsilon^2}{2} (\xi D_X \eta D_X \xi- 
    \frac{h^3}{3}\xi D_X^4 \xi) \Big] dX  +  o(\varepsilon^5) ~.
\label{Eq:scaledhamil}
\end{array}
\end{equation} 
\section{Homogenization and scale separation}\label{homog}
\setcounter{equation}{0}
The purpose of this section is to understand the asymptotic
behavior of integrals of the form
\begin{equation}\label{Eq:BasicMultiScaleIntegral}
   \int_{-\infty}^{+\infty}  \gamma(\frac{X}{\varepsilon}) f(X) \, dX  
   := Z_\varepsilon(\gamma, f) ~,
\end{equation}
where $f(X)$ comes from expressions which involve the physical
variables which depend only upon large spatial scales, and where
$\gamma(x) = \gamma(x; \omega)$ is a stationary ergodic process taken from 
 the statistical ensemble $\Omega$ from which our
realizations of the bottom are sampled. Principle examples of 
such integral expressions in the Hamiltonian for water waves are 
\begin{equation}\label{Eq:ExampleIntegral1}
   \int_{-\infty}^{+\infty}
  \beta(\frac{X}{\varepsilon}; \omega) |D_X \xi(X)|^2 \, dX ~
\end{equation}
as well as 
\begin{equation}\label{Eq:ExampleIntegral2}
   \int_{-\infty}^{+\infty}  \bigl( \beta D_x \tanh(hD_x)\beta \bigr)
 (\frac{X}{\varepsilon}) 
    |D_X \xi(X)|^2 \, dX ~. 
\end{equation}
In our previous work \cite{CGNS05}, expressions of this form are analyzed
under the hypothesis that $\beta$ was a periodic function of $x$.
In the present paper, we are concerned with the case in which the
bottom variations $\beta(x, \omega)$ are decorrelated over large
spatial scales, which is quantified with a mixing condition on 
$\Omega$.

\subsection{Stationary ergodic processes and mixing}
\label{Subsection:StationaryProcesses}

We take our statistical ensemble of random bottom variations
of the fluid domain to be modeled by a stationary ergodic process
which will possess some properties of mixing. Mathematically, 
given a probability space 
$(\Omega, {\mathcal M}, {\sf P})$ equipped with a group of 
${\sf P}$--measure preserving translations
$\{\tau_y: y\in{\mathbb R}\}$, and a function
$G : \Omega \to {\mathbb R}$, then a stationary process
$\gamma$ is given by  $\gamma(x;\omega) := G(\tau_x\omega)$.
The notation for the probability of a set 
$A \in {\mathcal M}$ is ${\sf P}(A)$, and integrals 
of functions $F$ over this probability space are denoted by
\begin{equation}
   \int_\Omega F \, dP = {\sf E}(F) ~.
\end{equation}
We further require that the measure be ergodic with respect 
to $\{ \tau_y \}_{y \in {\mathbb R}}$, meaning that for
any bounded measurable  function $F : \Omega \to {\mathbb R}$,
then for ${\sf P}$-almost every realization $\omega$,
\begin{equation}
   \lim_{L \to \infty} \frac{1}{L} \int_0^L F(\tau_y \omega) \, dy 
   = {\sf E}(F) ~.
\end{equation} 

 For our purposes,  we would like to take $\Omega := C({\mathbb R})$ 
the space of bounded continuous functions, for which the 
one-parameter group of translations is just that,
$(\tau_y \gamma)(\cdot) = \gamma(\cdot+y)$, for $y \in {\mathbb R}$. 
However it turns out that our sample space $C({\mathbb R})$ must be 
enlarged to a subset of the space of tempered distributions ${\mathcal S}'$,
as the process of taking limits invokes Donsker's invariance
principle, and the support of our limiting measures is on
distributions corresponding to one (or several) derivatives 
of Brownian motion. 
The modeling of a random bottom will require properties of
asymptotic independence of typical realizations with respect to
the probability measure $({\mathcal M}, {\sf P})$, specifically that
the translations $\{ \tau_y \}_{y \in {\mathbb R}}$ exhibit a
mixing property with respect to it. There are several notions of
mixing in the literature \cite{DK94}. For simplicity, we 
adopt the notion of uniform strong mixing (called $\alpha$--mixing),
although weaker conditions would also work in our setting. 
The stationary process defines a natural filtration on the probability
space given by the $\sigma$--algebras $\mathcal{M}^u_v=
\sigma(\gamma(y,\omega): v\leq y\leq u)$.
The notion of $\alpha$--mixing is that 
there is a bounded function $\alpha(y)$ 
for which $\alpha(y) \to 0$ as $y \to \infty$ 
such that for any two sets $A \in {\mathcal M}^\infty_0$ and  
$B \in {\mathcal M}^0_{-\infty}$ then
\begin{equation} \label{Eq:MixingCondition}
   |{\sf P}(A \cap \tau_y(B)) - {\sf P}(A){\sf P}(B) | 
     < \alpha(y) ~.
\end{equation}
Note that mixing implies the process is ergodic.
So that Donsker's invariance principle will extend to this 
mixing process \cite{OY72}, we require that 
$\alpha(y)=\mathcal{O}(1/y\log(y))$ for $y \mapsto +\infty$ 
as well as  
\begin{equation}\label{Eq:MixingRate}
   \int_0^\infty \alpha(y) \, dy < +\infty ~.
\end{equation}
The integral \eqref{Eq:ExampleIntegral2} 
involves a nonlocal expression in the bottom variations 
$\beta(x)$, implying that the random processes we are led 
to analyse will never be perfectly decorrelated under any
finite translation. Indeed, the spatial decay of the 
kernel of the operator $D\tanh(hD)$ implies a lower 
bound on $\alpha(y)$ of the form
$$
   \alpha(y) > e^{-2hy} ~,
$$
even for statistics of the actual realizations of the bottom
variations $\beta(x, \omega)$ which are fully decorrelated under 
sufficiently large finite translations $|y| > R$.

For the zero mean process $\gamma$, 
define the covariance function $\rho_\gamma$ to be
\begin{equation}
   \rho_\gamma(y) := {\sf E}(\gamma(0;\omega)\gamma(y;\omega))
     =  {\sf E}(\gamma(0;\omega) \tau_y\gamma(0;\omega)) ~,
\end{equation}
which is an even function of $y$ 
(\cite{Cr67} page 123,
 or \cite{B68}, page 178).
The variance $\sigma_\gamma^2 $ is given by the expression 
$$
   \sigma_\gamma^2 :=  2\int_0^\infty \rho_\gamma(y) \, dy ~.
$$
The integral exists because of the hypothesis of mixing
of the underlying process. The variance can take on any
value in $[0, +\infty)$, and we are principally concerned 
with the situation in which $\sigma_\gamma > 0$. To this
end we note the following fact.

\begin{Lemma}
When the process $\beta(x, \omega) = \partial_x \gamma(x, \omega)$,
for $\gamma(x) \in C^1$, a zero-mean, stationary process with the
above mixing properties,  then 
$$
   \sigma_\beta = 0 ~.
$$
\end{Lemma}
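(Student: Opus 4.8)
The plan is to express the covariance of the derivative process $\beta=\partial_x\gamma$ in terms of $\rho_\gamma$ and then to use that $\rho_\gamma$ is bounded and even. Writing $\gamma'=\partial_x\gamma=\beta$, stationarity gives ${\sf E}(\gamma(s)\gamma(t))=\rho_\gamma(t-s)$, and differentiating in $s$ and $t$ (an interchange justified below) yields $\rho_\beta(y)={\sf E}(\gamma'(0)\gamma'(y))=-\rho_\gamma''(y)$. Moreover, since ${\sf E}(\gamma(y)^2)\equiv\rho_\gamma(0)$ is constant, differentiation gives ${\sf E}(\gamma'(0)\gamma(0))=0$; equivalently $\rho_\gamma'(0)=0$, which also follows directly from the evenness of $\rho_\gamma$.

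Rather than integrate $-\rho_\gamma''$ directly, the cleanest route avoids the second derivative. By Fubini and the fundamental theorem of calculus,
\[
  \int_0^L \rho_\beta(y)\,dy
  = {\sf E}\Big(\gamma'(0)\int_0^L \gamma'(y)\,dy\Big)
  = {\sf E}\big(\gamma'(0)(\gamma(L)-\gamma(0))\big).
\]
Using ${\sf E}(\gamma'(0)\gamma(0))=0$ together with ${\sf E}(\gamma'(0)\gamma(L))=-\rho_\gamma'(L)$ (obtained by differentiating ${\sf E}(\gamma(s)\gamma(L))=\rho_\gamma(L-s)$ at $s=0$), this reduces to $\int_0^L \rho_\beta(y)\,dy=-\rho_\gamma'(L)$.

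It then remains to let $L\to\infty$. Being obtained from $\gamma$ by differentiation, $\beta$ is adapted to the same filtration and inherits the $\alpha$-mixing property, so $\int_0^\infty\rho_\beta$ converges and $\sigma_\beta$ is well defined; consequently $\rho_\gamma'(L)$ tends to a finite limit $c$ as $L\to\infty$. Since $|\rho_\gamma(y)|\le\rho_\gamma(0)$ by Cauchy--Schwarz, $\rho_\gamma$ is bounded, which forces $c=0$: otherwise $\rho_\gamma(L)=\rho_\gamma(0)+\int_0^L\rho_\gamma'(t)\,dt$ would grow linearly in $L$. Hence $\sigma_\beta^2=2\int_0^\infty\rho_\beta(y)\,dy=-2c=0$, so that $\sigma_\beta=0$ as claimed.

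I expect the main obstacle to be the justification of the interchanges of expectation with differentiation and with the $y$-integration. This requires mean-square $C^1$ regularity of $\gamma$, i.e. $\gamma'(0)=\beta(0)\in L^2(\Omega)$, after which Cauchy--Schwarz supplies the uniform domination $|{\sf E}(\gamma'(0)\gamma'(y))|\le{\sf E}(\gamma'(0)^2)$ that legitimizes Fubini and the differentiation of $\rho_\gamma$ under the expectation. The remaining delicate point is the vanishing of $\lim_{L\to\infty}\rho_\gamma'(L)$, which I deduce not from the mixing rate itself but from the a priori boundedness of $\rho_\gamma$ above.
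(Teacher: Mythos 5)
Your proof is correct, and its central computation coincides with the paper's: both reduce $\int_0^L \rho_\beta(y)\,dy$ to a difference of two boundary terms via the fundamental theorem of calculus (the paper writes ${\sf E}(\beta(0)\beta(y)) = \partial_y {\sf E}(\gamma'(x)\gamma(x+y))$ and integrates; you write ${\sf E}\bigl(\gamma'(0)\int_0^L\gamma'(y)\,dy\bigr) = {\sf E}\bigl(\gamma'(0)(\gamma(L)-\gamma(0))\bigr)$, which is the same identity). Where you genuinely diverge is in how the two endpoint contributions are killed. For the term at the origin, the paper identifies $2{\sf E}(\gamma'(x)\gamma(x)) = {\sf E}(\partial_x\gamma^2)$ and invokes the Birkhoff ergodic theorem to show this spatial average of an exact derivative vanishes; you instead differentiate the constant function $y \mapsto {\sf E}(\gamma(y)^2) = \rho_\gamma(0)$, i.e.\ you use only stationarity. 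Your route is the lighter one: it needs no ergodicity, only the $L^2$ regularity you flag at the end. For the term at infinity, the paper appeals directly to mixing to conclude ${\sf E}(\gamma'(x)\gamma(x+y)) \to {\sf E}(\gamma'){\sf E}(\gamma) = 0$, which is immediate; you argue indirectly that $\rho_\gamma'(L) = -{\sf E}(\gamma'(0)\gamma(L))$ converges (because $\int_0^\infty\rho_\beta$ converges, which itself rests on the covariance inequality for $\alpha$-mixing bounded variables, the same fact the paper uses to assert $\sigma_\gamma^2$ is finite) and that a bounded $\rho_\gamma$ cannot have a nonzero asymptotic slope. Both are valid; the paper's is more direct at infinity, yours is cleaner at the origin, and your closing remarks correctly identify the interchange of expectation with differentiation and integration as the only points requiring the mean-square $C^1$ hypothesis, which holds here since $\gamma$ is assumed bounded in $C^1$ almost surely.
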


\smallskip\noindent
\begin{proof}
By definition,
\begin{eqnarray}
  \sigma_\beta^2 & = & 2 \int_0^{+\infty} {\sf E}(\beta(0)\beta(y))  dy  
   = 2  \int_0^{+\infty} {\sf E}(\beta(x)\beta(x+y))dy \\
   & = & 2 \int_0^{+\infty} {\sf E}(\partial_x \gamma(x)
 \partial_x\gamma(x+y)) dy \nonumber = 2 \int_0^{+\infty} {\sf E}(\partial_x \gamma(x)
   \partial_y\gamma(x+y)) dy  \nonumber \\
   & = &  2 \int_0^{+\infty}  \partial_y {\sf E}(\partial_x \gamma(x)
    \gamma(x+y)) dy ~. \nonumber 
\end{eqnarray}
Therefore by integrating, 
$$
   \sigma_\beta^2 = - 2{\sf E} (\partial_x \gamma(x) \gamma(x)) 
   +  2{\sf E} (\partial_x \gamma(x) \gamma(x + y))|_{y=+\infty} 
   = - {\sf E} (\partial_x \gamma^2(x)) ~,
$$
because the process is mixing. Using the hypothesis of ergodicity,
\begin{equation}
  {\sf E} (\partial_x \gamma^2(x)) = \lim_{T\to \infty} \frac{1}{T}
   \int_0^T \partial_x \gamma^2(x) \, dx 
   = \lim_{T\to \infty} \frac{1}{T} ( \gamma^2(x))\Big|_{x=0}^T = 0 ~.
\end{equation}
Thus the most interesting processes are those which are not derived
from derivatives of another stationary process; this fact will 
be reflected in our analysis of the asymptotics of the integrals 
\eqref{Eq:BasicMultiScaleIntegral} in the next section.  
\end{proof}


\subsection{Scale separation}
\label{Section:ScaleSeparation}

The asymptotic analysis of Hamiltonians or partial differential
equations which involve random coefficients needs to establish a 
clear criterion with which to characterize terms by their order parameter.
In our present analysis, we view each term as a tempered distribution 
in space and time, namely in ${\mathcal S}'({\mathbb R}^2)$. We consider 
a term  $a(X,t; \varepsilon)$ 
to be of {\em order} $\BigOh{\varepsilon^r}$ if for any Schwartz class 
test function $\varphi(X,t)$ the limit 
$\lim_{\varepsilon \to 0} \varepsilon^{-r} 
\int a(X,t; \varepsilon) \varphi(X,t) \, dXdt$ exists. 
In this context, the terms of a partial differential
equation with random coefficients represent random ensembles 
of tempered distributions, say $\{ a(X,t; \omega, \varepsilon): 
\omega \in \Omega \} \subseteq {\mathcal S}'({\mathbb R}^2) $, which 
we state to be of order $\BigOh{\varepsilon^r}$ if for any test 
function $\varphi(X,t) \in {\mathcal S}({\mathbb R}^2)$ the 
probability measures $d{\sf P_\varepsilon}$ of 
$\varepsilon^{-r} \int a(X,t; \varepsilon, \omega) \varphi(X,t) \, dXdt$
converges weakly to some $d{\sf P_0}$. In this section we
discuss the behavior of such terms in the form 
\begin{equation}
\begin{array}{c}
   \int \gamma(\frac{X}{ \varepsilon},t; \omega) v(X, t) \varphi(X,t) \, dXdt
\\
\noalign{\vskip6pt}
  \int \gamma_1(\frac{X}{\varepsilon},t; \omega)
 \gamma_2(\frac{X+c t}{\varepsilon},t; \omega)
 v(X, t) \varphi(X,t) \, dXdt
\end{array}
\label{special_int}
\end{equation}
where $\gamma$ is a stationary mixing process, $v$ is a
solution to one of the several differential equations under
discussion, and $\varphi$ plays the r\^ole of a test function. 

\begin{Lemma}\label{Lemma3.2}
For $\gamma(x;\omega)$ a stationary ergodic process 
and for
$f(X) \in L^1({\mathbb R})$,
 then for ${\sf P}$-a.e. 
realization $\omega$, 
\begin{equation}
   \int_{-\infty}^{+\infty} f(X) \gamma(\frac{X}{\varepsilon};\omega) \, dX 
   = {\sf E}(\gamma) \int_{-\infty}^{+\infty} f(X) \, dX + o(1) ~.
\label{ergodictheor}
\end{equation}
\end{Lemma}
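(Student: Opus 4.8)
The plan is to deduce the statement from Birkhoff's pointwise ergodic theorem, first treating the simplest test functions $f$ and then extending by density. First I would fix a realization $\omega$ in the full-measure set on which the ergodic averages of both $\gamma$ and $|\gamma|$ converge; that is, $\frac{1}{L}\int_0^L \gamma(s;\omega)\,ds \to {\sf E}(\gamma)$ and $\frac{1}{L}\int_0^L |\gamma(s;\omega)|\,ds \to {\sf E}(|\gamma|)$ as $L\to\pm\infty$ (ergodicity is exactly the hypothesis, and ${\sf E}|\gamma|<\infty$). The convenient object is the rescaled antiderivative
$$F_\varepsilon(X) := \int_0^X \gamma(s/\varepsilon;\omega)\,ds = \varepsilon\int_0^{X/\varepsilon}\gamma(s;\omega)\,ds ,$$
for which the ergodic theorem yields, for each fixed $X$, the pointwise limit $F_\varepsilon(X)\to {\sf E}(\gamma)\,X$. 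The same computation with $|\gamma|$ produces the a priori linear bound $\int_0^{|X|}|\gamma(s/\varepsilon;\omega)|\,ds \le C(\omega)(|X|+\varepsilon)$, uniform in $\varepsilon$, which is the only quantitative input needed later.

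Next I would establish \eqref{ergodictheor} for indicators of intervals, where it is immediate: for $f=\mathbf{1}_{[a,b]}$ one has $\int f(X)\gamma(X/\varepsilon)\,dX = F_\varepsilon(b)-F_\varepsilon(a)\to {\sf E}(\gamma)(b-a)={\sf E}(\gamma)\int f$. By linearity this gives the conclusion for every step function $g$, and the same argument applied to $|\gamma|$ gives $\int g(X)\,|\gamma(X/\varepsilon)|\,dX\to {\sf E}(|\gamma|)\int g$ for nonnegative step $g$.

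Finally I would pass to general $f\in L^1$ by approximation. Given $\delta>0$, I would choose a compactly supported step function $g$ with $\|f-g\|_{L^1}<\delta$ and split
$$\Big|\int f\gamma(\tfrac{\cdot}{\varepsilon}) - {\sf E}(\gamma)\!\int f\Big| \le \Big|\int g\gamma(\tfrac{\cdot}{\varepsilon}) - {\sf E}(\gamma)\!\int g\Big| + \int|f-g|\,|\gamma(\tfrac{\cdot}{\varepsilon})| + |{\sf E}(\gamma)|\!\int|f-g| .$$
The first term tends to $0$ by the step-function case and the third is at most $|{\sf E}(\gamma)|\delta$, so everything reduces to controlling the middle term uniformly in $\varepsilon$.

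This middle term is where the real work lies, and I expect it to be the main obstacle: because $\gamma$ need not be bounded, smallness of $f-g$ in $L^1$ does not by itself control its pairing with the oscillating factor $|\gamma(\cdot/\varepsilon)|$. In the setting of this paper the bottom is assumed bounded ($\tilde\beta\in C^1$ a.s.), so $|\gamma(\cdot/\varepsilon)|\le M(\omega)$ and the middle term is simply $\le M(\omega)\delta$, which closes the argument at once. For a genuinely unbounded stationary $\gamma$ one would instead truncate, writing $|f-g| = (|f-g|\wedge N)+(|f-g|-N)_+$ and handling the bounded part through the step-function convergence for $|\gamma|$ and the unbounded tail through the linear bound on $\int_0^X|\gamma(s/\varepsilon)|\,ds$; it is this truncation estimate, rather than the reduction to intervals, that I anticipate as the delicate step.
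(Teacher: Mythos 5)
Your proof is correct, and it rests on the same engine as the paper's --- Birkhoff's ergodic theorem applied to the rescaled antiderivative $\varepsilon\int_0^{X/\varepsilon}\gamma(s;\omega)\,ds$ --- but the passage to general $f$ is organized differently. The paper proves the statement for Schwartz-class $f$ by writing $\gamma(X/\varepsilon)=\varepsilon\,\tfrac{d}{dX}\int_0^{X/\varepsilon}\gamma\,ds$, integrating by parts onto $f'$, and applying dominated convergence to $Xf'(X)\cdot\tfrac{\varepsilon}{X}\int_0^{X/\varepsilon}\gamma\,ds$; it then simply asserts that $f\in L^1$ suffices. You instead verify the limit directly on indicators of intervals (where it is the fundamental theorem of calculus plus Birkhoff, with no integration by parts), extend by linearity to step functions, and close with an explicit $L^1$-density argument. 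What your route buys is that it actually substantiates the paper's unproved closing remark about $L^1$, and it correctly isolates where boundedness of $\gamma$ enters: the error term $\int|f-g|\,|\gamma(\cdot/\varepsilon)|$ is controlled uniformly in $\varepsilon$ only because $\gamma$ is almost surely bounded (as it is here, since $\tilde\beta\in C^1$ with bounded realizations); for genuinely unbounded $\gamma$ the pairing with an arbitrary $f\in L^1$ need not even be well defined, so the lemma implicitly uses this. The one step I would not fully trust as written is your final truncation sketch for the unbounded case --- splitting $|f-g|=(|f-g|\wedge N)+(|f-g|-N)_+$ does not by itself tame the pairing, since $(|f-g|\wedge N)$ is not a step function and the tail has the same defect as the original difference --- but since you correctly note that the bounded case is all that is needed in this paper, this does not affect the validity of your proof of the lemma.
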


\noindent
\begin{proof}
For a Schwartz class function $f$ we have
\begin{eqnarray}
 &&  \int _{-\infty}^{+\infty}f(X)\gamma(\frac{X}{\varepsilon};\omega) dX
   =  \varepsilon \int _{-\infty}^{+\infty}f(X) \frac{d}{dX} (\int_0^{\frac{X}{
  \varepsilon}} \gamma(s;\omega) ds) dX \nonumber\\
  &&\qquad =  - \int _{-\infty}^{+\infty} X f'(X) \frac{\varepsilon}{X} 
     \int_0^{\frac{X}{\varepsilon}} \gamma(s;\omega) ds dX. 
\end{eqnarray}
As $\varepsilon \to 0$, combining Birkhoff ergodic theorem  
\begin{equation} 
   \frac{\varepsilon}{X} \int_0^{\frac{X}{\varepsilon}} \gamma(s;\omega) \, ds 
   \to {\sf E}(\gamma)
\end{equation} 
 with the  dominated convergence theorem
leads to
\begin{equation}
   \int _{-\infty}^{+\infty}f(X)\gamma(\frac{X}{\varepsilon};\omega) \, dX
   \to - {\sf E}(\gamma)\int _{-\infty}^{+\infty} Xf'(X) dX
\end{equation} 
and finally (\ref{ergodictheor}). 
In fact it suffices that $f \in L^1({\mathbb R})$ for the result
to hold. 
\end{proof}

The immediate application of the lemma is to the integrals 
\eqref{Eq:ExampleIntegral1}\eqref{Eq:ExampleIntegral2}, at
least to the order implied by Lemma~\ref{Lemma3.2} for their mean 
values. Under the assumption that $\xi(X) \in H^1({\mathbb R})$,
the first of these vanishes up to order $o(1)$ as
${\sf E}(\beta) = 0$, at least for ${\sf P}$-a.e. realization
$\omega$. What is clear is that the fluctuations of
\eqref{Eq:ExampleIntegral1} will play an important r\^ole 
in the derivation of the appropriate Hamiltonian equations 
of motion. The second integral \eqref{Eq:ExampleIntegral2}
   is less straightforward, as 
the mixing condition \eqref{Eq:MixingRate} is in competition 
with the integral operators represented by the Fourier multiplier
operators of the expression. We have that 
\begin{eqnarray}\label{Eq:MeanValueMultiplier}
&&   \int \bigl( \beta(x) D_x \tanh(hD_x) \beta(x) 
      \bigr)\big|_{x=\frac{X}{\varepsilon}} |D_X \xi(X)|^2 \, dX \nonumber \\
&&  \to  {\sf E}(\beta D_x \tanh(hD_x) \beta ) \int |D_X \xi(X)|^2 \, dX.
\end{eqnarray}
There are two things to discuss with this statement. The first 
is that whenever $\gamma(x, \omega) \in C^1$  
is stationary with regard to some probability space
 $(\Omega, {\mathcal M}, {\sf P})$,
then an order zero Fourier multiplier operator applied to $\gamma(x)$ 
is also stationary. Indeed, translation is respected
\begin{eqnarray}
   m(D_x)\gamma(x,\tau_y\omega) &=& \frac{1}{2\pi} \int e^{ik(x-x')} m(k) 
      \gamma(x', \tau_y\omega) \, dx'dk \\
  &=& \frac{1}{2\pi} \int e^{ik(x-x')} m(k) 
      \gamma(x'-y, \omega) \, dx'dk \nonumber \\
   & =& \frac{1}{2\pi} \int e^{ik((x-y)-x')} m(k) 
      \gamma(x', \omega) \, dx'dk \nonumber \\
  &=& m(D_x)\gamma(x-y,\omega) ~.
\end{eqnarray}
Furthermore, continuous functions of $\gamma \in C^1$, such as 
$g(\gamma) = (\gamma m(D_x)\gamma)(0)$ are measurable. By the 
ergodic theorem, for any bounded measurable $F$
\[
   \lim_{L\to \infty} \frac{1}{L}\int_0^L F(\tau_x g(\gamma)) \, dx 
    = {\sf E}(F(g)) ~,
\]
and therefore the process $\tau_x g(\gamma)$ is ergodic. 
Secondly, the expectation values of quadratic functions of $\gamma$
may be computed from the covariance function $\rho_\gamma$ 
of the stationary process. For example, 
\begin{eqnarray}\label{Eq:StationaryFourierMultipliers}
  & & {\sf E}(\gamma m(D_x)\gamma) = \lim_{y \to 0} 
     {\sf E}(\gamma(x) m(D_x)\gamma(x-y)) \nonumber \\
  & &  = \lim_{y \to 0} {\sf E}( m(-D_y)\gamma(x)\gamma(x-y))
    = \lim_{y \to 0} m(-D_y) \rho_\gamma(y)  \nonumber \\
  & & = m(-D_y) \rho_\gamma(0) ~.
\end{eqnarray}
Using these two facts, \eqref{Eq:MeanValueMultiplier} is verified
as the principal contribution from integral \eqref{Eq:ExampleIntegral2}.

\medskip

\begin{Lemma}\label{Lemma3.3} \cite{B68}
Suppose that $\beta(x; \omega)$ is a stationary ergodic process which
is mixing, with a rate $\alpha(y)$ which satisfies the condition 
\eqref{Eq:MixingRate}.
Assume that  ${\sf E}(\beta) = 0$ and that $\sigma_\beta \ne 0$.  Define
\begin{equation}\label{Eq:Y-beta}
   Y_\varepsilon(\beta)(X) 
   = \frac{\sqrt{\varepsilon}}{\sigma_\beta} \int_0^{\frac{X}{\varepsilon}} 
     \beta(y) \, dy ~.
\end{equation}
As $\varepsilon$ tends to zero, we have, in the sense 
convergence in law  that 
\begin{equation}
   Y_\varepsilon(\beta)(X) \rightharpoonup B(X) ~,
\end{equation}
where $B_\omega(X)=B(X)$ is a normalized Brownian motion.
\end{Lemma}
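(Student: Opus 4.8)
The plan is to establish this as a functional central limit theorem (an invariance principle) for the rescaled integral of the stationary $\alpha$-mixing process $\beta$, following the classical two-step program of Billingsley~\cite{B68} and its extension to mixing sequences by Oodaira and Yoshihara~\cite{OY72}: first I would prove convergence of the finite-dimensional distributions of $Y_\varepsilon(\beta)$ to those of Brownian motion, and then I would prove tightness of the family of laws in the path space $C([0,T])$. Since $Y_\varepsilon(\beta)(X)$ is an absolutely continuous function of $X$ for each realization, tightness in the uniform topology on $C([0,T])$ is the natural target, and weak convergence there together with the correct finite-dimensional limits identifies the limit as standard Brownian motion. Because the result is stated with attribution to~\cite{B68}, the substance of the proof is really the verification that the mixing hypothesis~\eqref{Eq:MixingRate}, together with the almost sure boundedness of $\beta$, places us inside the scope of these known invariance principles.

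The first technical ingredient I would record is the variance asymptotics. Writing $\rho_\beta(y) = {\sf E}(\beta(0)\beta(y))$, stationarity gives
\begin{equation}
   {\sf E}\Big( \Big( \int_0^{T/\varepsilon} \beta(y)\,dy \Big)^2 \Big)
   = \int_0^{T/\varepsilon}\int_0^{T/\varepsilon} \rho_\beta(y - y') \, dy\, dy' ~.
\end{equation}
Since $\beta$ is bounded almost surely, the covariance inequality for $\alpha$-mixing processes yields $|\rho_\beta(y)| \le 4 \|\beta\|_\infty^2 \, \alpha(|y|)$, so that $\rho_\beta \in L^1$ by~\eqref{Eq:MixingRate} and the double integral is asymptotic to $(T/\varepsilon)\, 2\int_0^\infty \rho_\beta(y)\,dy = (T/\varepsilon)\,\sigma_\beta^2$. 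Multiplying by $\varepsilon/\sigma_\beta^2$ shows that ${\sf E}(Y_\varepsilon(\beta)(T)^2) \to T$, which is the variance of Brownian motion at time $T$ and fixes the normalization. The same computation applied to increments over disjoint intervals, combined with the fact that the mixing rate forces the cross-covariance between well-separated increments to vanish in the limit, produces the Brownian covariance structure $\min(X_i, X_j)$ and the asymptotic independence of increments. For the finite-dimensional distributions I would then invoke the central limit theorem for stationary $\alpha$-mixing processes: each rescaled increment $Y_\varepsilon(\beta)(X_{j+1}) - Y_\varepsilon(\beta)(X_j)$ is asymptotically $N(0, X_{j+1}-X_j)$, and the mixing property makes the increments over disjoint intervals asymptotically jointly Gaussian and independent, so their joint law converges to that of the increments of $B$. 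It is convenient here to pass to the discrete block sums $\xi_j = \int_j^{j+1}\beta(y)\,dy$, which form a bounded, zero-mean, stationary $\alpha$-mixing sequence inheriting the rate~\eqref{Eq:MixingRate}, so that the classical statements for such sequences apply verbatim; the contribution of the fractional endpoint is $\BigOh{\sqrt{\varepsilon}}$ and therefore negligible.

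The main obstacle is the tightness estimate. I would establish it through a moment bound on increments of the form
\begin{equation}
   {\sf E}\big( |Y_\varepsilon(\beta)(X_2) - Y_\varepsilon(\beta)(X_1)|^4 \big)
     \le C\, |X_2 - X_1|^2 ~,
\end{equation}
uniformly in $\varepsilon$, whence Kolmogorov's criterion yields tightness in $C([0,T])$. The delicate point is that this requires controlling fourth-order mixed moments of $\beta$, and it is precisely here that the strength of the mixing rate enters: the summability~\eqref{Eq:MixingRate} together with the logarithmic rate $\alpha(y)=\BigOh{1/(y\log y)}$ imposed earlier is exactly the hypothesis under which Oodaira and Yoshihara~\cite{OY72} prove that the partial-sum process of a bounded stationary mixing sequence satisfies Donsker's invariance principle. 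I would therefore either carry out the fourth-moment estimate directly, splitting the fourfold integral of $\beta$ according to the maximal gap between its four arguments and bounding each piece by the mixing coefficient, or simply verify that the hypotheses of the Oodaira--Yoshihara theorem hold and quote it. Combining convergence of the finite-dimensional distributions with tightness gives $Y_\varepsilon(\beta) \rightharpoonup B$ in $C([0,T])$ for every $T$, and an identical argument for $X<0$ extends the convergence to all of $\mathbb{R}$.
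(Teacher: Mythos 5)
The paper offers no proof of Lemma~\ref{Lemma3.3} at all: it is stated as a quoted result, with the attribution to \cite{B68} (and, for the mixing extension, to \cite{OY72}) standing in for the argument. Your outline --- finite-dimensional convergence via the $\alpha$-mixing covariance inequality and the CLT for bounded stationary mixing block sums, plus tightness from a fourth-moment increment bound or directly from the Oodaira--Yoshihara invariance principle --- is precisely the standard argument behind those citations, so it is correct and consistent with the route the paper intends.
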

In particular,  
let $f(X) \in {\mathcal S}$ be a Schwartz class function, then 
\begin{eqnarray}
&&   \frac{1}{\sigma_\beta \sqrt{\varepsilon}} Z_\varepsilon(\beta, f)
   :=  \int_{-\infty}^{+\infty} \frac{1}{\sigma_\beta \sqrt{\varepsilon}}
    \beta(\frac{X}{\varepsilon}) f(X) \, dX  \label{Zepsilon}\\
 &&  = \int_{-\infty}^{+\infty} Y'_\varepsilon(\beta)(X) f(X) \, dX 
 = \int_{-\infty}^{+\infty} -\partial_X f(X) B(X) \, dX 
   + o(1) ~.  \nonumber
\end{eqnarray} 
This is to say that under the mild condition of mixing given in
\eqref{Eq:MixingRate}, the integrals in question converge to a
canonical stationary process, for which only two parameters are 
distinguished, the mean value ${\sf E}(\beta)$ and the 
variance $\sigma_\beta^2 $. This canonical process is given by white 
noise, 
\begin{equation}
   \int_{-\infty}^{+\infty} \beta(\frac{X}{\varepsilon}) f(X) \, dX
   =  \int_{-\infty}^{+\infty} \bigl( {\sf E}(\beta)  
   + \sqrt{\varepsilon}\sigma_\beta \partial_X B(X) \bigr)
     f(X) \, dX + o(\sqrt{\varepsilon}),
\end{equation}
where the equality is in the sense of convergence in law.
The function $f(X)$ in the integrand must be sufficiently smooth
for the latter quantities to have a mathematical sense. In fact 
we consider the operation of multiplication by $\beta(\frac{X}{\varepsilon})$
to be in the distributional sense, which has for a limit the 
distribution 
$\sqrt{\varepsilon}\sigma_\beta \partial_X B(X) \in {\mathcal S}'$.
This is given a precise statement in the following lemma. 

\begin{Lemma}\label{Lemma3.4}
As a distribution, multiplication by $\beta(X/\varepsilon)$
has a canonical limit in ${\mathcal S}'$. Indeed, for $f \in {\mathcal S}$, 
\begin{equation}
   \beta(\frac{X}{\varepsilon}) f(X) =  {\sf E}(\beta) f(X) 
   + \sqrt{\varepsilon} \sigma_\beta \partial_X B(X) f(X)
   + o(\sqrt{\varepsilon}) ~.
\end{equation}
\end{Lemma}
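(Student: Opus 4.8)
The plan is to verify the asserted distributional identity by pairing both sides against an arbitrary Schwartz test function $\psi \in \mathcal{S}$ and reducing to the scalar results of Lemmas~\ref{Lemma3.2} and~\ref{Lemma3.3}. The essential observation is that if $f,\psi \in \mathcal{S}$ then the product $f\psi$ is again a Schwartz function, so that testing the field $\beta(X/\varepsilon) f(X)$ against $\psi$ produces exactly the integral $Z_\varepsilon(\beta, f\psi)$ to which the earlier lemmas apply verbatim. On the right-hand side, the correction is the product of the tempered distribution $\partial_X B \in \mathcal{S}'$ with the function $f$; since $f \in \mathcal{S}$ belongs to the space $\mathcal{O}_M$ of smooth functions all of whose derivatives grow at most polynomially, the product $f\,\partial_X B$ is a well-defined element of $\mathcal{S}'$ and satisfies $\langle f\,\partial_X B, \psi\rangle = \langle \partial_X B, f\psi\rangle$ for every $\psi$. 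Thus, after pairing with $\psi$, the content of Lemma~\ref{Lemma3.4} is precisely the white-noise statement preceding it, read with $f$ replaced by $g := f\psi$.

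I would first separate mean from fluctuation, writing $\beta = {\sf E}(\beta) + \beta_c$ with $\beta_c$ centered and of variance $\sigma_{\beta_c} = \sigma_\beta$. Testing the left-hand side against $\psi$ gives $\int \beta(X/\varepsilon)\,g(X)\,dX = {\sf E}(\beta)\int g\,dX + \int \beta_c(X/\varepsilon)\,g(X)\,dX$, where the first term is exact because ${\sf E}(\beta)$ is constant, so it contributes the leading term ${\sf E}(\beta)\int f\psi\,dX$ with no error. For the second term I would apply Lemma~\ref{Lemma3.3} to the centered mixing process $\beta_c$ and the Schwartz function $g$, using \eqref{Zepsilon}: in the sense of convergence in law, $\int \beta_c(X/\varepsilon)\,g(X)\,dX = \sigma_\beta\sqrt{\varepsilon}\int Y'_\varepsilon(\beta_c)(X)\,g(X)\,dX + o(\sqrt{\varepsilon})$. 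An integration by parts, legitimate because $g = f\psi \in \mathcal{S}$ decays faster than any polynomial while $Y_\varepsilon(\beta_c)$ and its Brownian limit grow sublinearly, converts the middle integral into $-\int Y_\varepsilon(\beta_c)(X)\,\partial_X g(X)\,dX$, which by the weak convergence $Y_\varepsilon(\beta_c) \rightharpoonup B$ tends in law to $-\int B(X)\,\partial_X g(X)\,dX = \langle \partial_X B, g\rangle = \langle \partial_X B, f\psi\rangle$. Comparing with the pairing of the right-hand side established in the first paragraph completes the identification for each fixed $\psi$.

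The only delicate point, and where I expect the (mild) main obstacle to lie, is the precise meaning of ``convergence as a distribution.'' Under the notion of order adopted in Section~\ref{Section:ScaleSeparation}, convergence is tested one Schwartz function at a time, so the computation above, carried out for an arbitrary but fixed $\psi$, already establishes the lemma as stated. If one instead wished to assert convergence in law of $\varepsilon^{-1/2}\bigl(\beta(X/\varepsilon) f - {\sf E}(\beta) f\bigr)$ to $\sigma_\beta\,\partial_X B\, f$ as a random element of $\mathcal{S}'$, one would additionally need convergence of the finite-dimensional distributions obtained by pairing jointly against finitely many test functions, together with tightness of the laws in $\mathcal{S}'$. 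The former follows from the Cram\'er--Wold device applied to finite linear combinations $\sum_j c_j\, f\psi_j \in \mathcal{S}$ and the multidimensional form of Donsker's invariance principle for mixing processes, the same input already invoked for Lemma~\ref{Lemma3.3}; the latter follows from uniform-in-$\varepsilon$ second-moment bounds on the pairings guaranteed by the mixing rate \eqref{Eq:MixingRate}. A secondary technical point is the continuous-mapping step passing from $Y_\varepsilon(\beta_c) \rightharpoonup B$ to the convergence of the linear functional $\int Y_\varepsilon(\beta_c)\,\partial_X g\,dX$; this is justified by the rapid decay of $\partial_X g$, which dominates the almost sure sublinear growth of Brownian motion and renders the functional continuous on path space.
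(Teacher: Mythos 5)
Your proposal is correct and follows essentially the same route as the paper's proof: test against a Schwartz function $\varphi$, observe that $f\varphi\in\mathcal{S}$, apply the white-noise statement of Lemma~\ref{Lemma3.3} to $f\varphi$, and integrate by parts to transfer the derivative from $B$ onto the test function. Your additional remarks on the mean/fluctuation splitting, the continuous-mapping step, and the distinction between pairing-by-pairing convergence and convergence in law in $\mathcal{S}'$ are sensible elaborations of points the paper leaves implicit (the paper instead supplements its proof with a covariance computation confirming consistency with white noise of covariance $\sigma_\beta^2\delta(X-X')$).
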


\smallskip\noindent
\begin{proof}
 Test the quantity above with a Schwartz class 
function $\varphi(X)$;
\begin{eqnarray}
   & &\int  \beta(\frac{X}{\varepsilon}) f(X) \varphi(X) \, dX
   \\
   & &\quad = {\sf E}(\beta) \int \bigl( f(X) \varphi(X) \bigr) \, dX
     - \sqrt{\varepsilon}\sigma_\beta \int B(X) 
       \partial_X (f\varphi) \, dX + o(\sqrt{\varepsilon})   \nonumber \\
   & & \quad = \int \bigl( {\sf E}(\beta)  
     +  \sqrt{\varepsilon}\sigma_\beta \partial_X B(X) \bigr)
        f(X)\varphi(X) \, dX + o(\sqrt{\varepsilon}) ~.   \nonumber
\end{eqnarray}

\noindent
This is to say that for each $f$, the random variable
$Z_\varepsilon(\beta, f)$ given in (\ref{Zepsilon})  is asymptotically normally
distributed. Given two functions $f, g \in {\mathcal S}$, the covariance
function ${\sf E}(Z_\varepsilon(\beta, f)Z_\varepsilon(\beta, g))$
can be computed in the limit as $\varepsilon \to 0$. Indeed
\begin{equation}
\begin{array}{l}
 {\sf E}(Z_\varepsilon(\beta, f)Z_\varepsilon(\beta, g))  =  
   \frac{1}{\varepsilon} \int\int \rho_\beta(\frac{X-X'}{\varepsilon})
     f(X)g(X') \, dXdX' \\
\noalign{\vskip7pt}
 \quad  = \int\int \rho_\beta(x') f(X) g(X - \varepsilon x') \, dXdx' \\
\noalign{\vskip7pt}
\quad   = \int\int \rho_\beta(x') f(X) \bigl( g(X) - \varepsilon x'
     \partial_X g(X) + \frac{\varepsilon^2}{2} x'^2 \partial_X^2 g(X) 
   + \dots \bigr)dX dx'.
\end{array}
\nonumber
\end{equation}
Noting that the term at order 
$\varepsilon$ vanishes because $\rho_\beta$ is an even function,
we have 
\begin{equation}
\begin{array}{c}
 {\sf E}(Z_\varepsilon(\beta, f)Z_\varepsilon(\beta, g))
   =  \int \rho_\beta(x') \, dx' \int f(X)g(X) \, dX  \\
\noalign{\vskip6pt}
     - \frac{\varepsilon^2}{2} \int x'^2 \rho_\beta(x') \, dx' 
     \int \partial_X f(X) \partial_X g(X) \, dX + \dots 
\end{array}
\end{equation}
In the limit as $\varepsilon \to 0$, this quantity converges to
\begin{equation}
   {\sf E}(Z_0(f) Z_0(g)) = \sigma_\beta^2 \int f(X) g(X) \, dX ~,
\end{equation}
where $Z_0(f) =\sigma_\beta \sqrt{\varepsilon}
 \int f(X) \partial_X B(X) \, dX$. This
expression is consistent with the covariance of the white noise 
process being given by $\sigma_\beta^2 \delta(X-X')$. 
\end{proof}

In the case of a process $\beta(x)$ for which 
$\sigma_\beta = 0$, the limit process for $Y_\varepsilon(X)$
is of a different character. In particular, consider a stationary 
 mixing process which is the derivative of another stationary 
process. Indeed let 
$\gamma(x) \in C^{r+1}({\mathbb R})$, and set 
$\beta(x) = \partial_x^r \gamma(x)$. 
Automatically ${\sf E}(\beta) = 0$ and $\sigma_\beta=0$. 
In this situation we have a different asymptotic result for the
behavior of integrals such as in \eqref{Eq:BasicMultiScaleIntegral}.

\begin{Lemma}\label{Lemma3.5}
Suppose that $\gamma(x) \in C^{r+1}({\mathbb R})$ is a stationary
ergodic process which satisfies the mixing condition
\eqref{Eq:MixingRate}, and set $\beta(x) = \partial_x^r \gamma(x)$.
 Then the process $\beta(X/\varepsilon)$ is asymptotic in the 
sense of distributions to higher derivatives of Brownian motion. That is, for
$\varphi(X) \in {\mathcal S}$ we have 
\begin{equation}
   \int \beta(\frac{X}{\varepsilon}) \varphi(X) \, dX = 
   \varepsilon^{r+1/2} \sigma_\gamma \int \partial_X^{r+1} B(X)
  \varphi(X)  \, dX + o(\varepsilon^{r+1/2}) ~. 
\end{equation} 
\end{Lemma}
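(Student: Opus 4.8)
The plan is to reduce the claim to the order-$\sqrt{\varepsilon}$ scale separation already established in Lemma~\ref{Lemma3.4} by exploiting the elementary scaling identity relating $x$- and $X$-derivatives. Since $\beta = \partial_x^r\gamma$ and $X=\varepsilon x$, the chain rule gives $\partial_X[\gamma(X/\varepsilon)] = \varepsilon^{-1}(\partial_x\gamma)(X/\varepsilon)$, and iterating,
\[
   \beta(X/\varepsilon) = (\partial_x^r\gamma)(X/\varepsilon) = \varepsilon^r\,\partial_X^r\big[\gamma(X/\varepsilon)\big].
\]
First I would pair this with a test function $\varphi\in\mathcal{S}$ and move the $r$ derivatives onto $\varphi$ by integrating by parts (no boundary terms arise since $\varphi$ and all its derivatives decay rapidly):
\[
   \int \beta(X/\varepsilon)\,\varphi(X)\,dX = (-1)^r\varepsilon^r \int \gamma(X/\varepsilon)\,\partial_X^r\varphi(X)\,dX.
\]
This rewrites the integral of the ``high-derivative'' process $\beta$ against $\varphi$ as an integral of the base process $\gamma$ against the Schwartz function $\partial_X^r\varphi$, at the cost of an explicit prefactor $\varepsilon^r$.

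Next I would apply the distributional limit of Lemma~\ref{Lemma3.4} to $\gamma$ with the test function $\psi := \partial_X^r\varphi \in \mathcal{S}$, giving
\[
   \int \gamma(X/\varepsilon)\,\psi(X)\,dX = {\sf E}(\gamma)\int\psi(X)\,dX + \sqrt{\varepsilon}\,\sigma_\gamma\int \partial_X B(X)\,\psi(X)\,dX + o(\sqrt{\varepsilon}).
\]
The crucial simplification is that the mean-value term drops out: since $r\ge 1$ and $\psi=\partial_X^r\varphi$ is an exact derivative of a Schwartz function, $\int\psi\,dX = 0$. (If $\gamma$ carries a nonzero mean one may simply pass to the fluctuation $\gamma-{\sf E}(\gamma)$, which is mean-zero, stationary and mixing with the same variance $\sigma_\gamma$; this changes nothing, since $\partial_x^r$ annihilates constants.) Substituting back leaves the single leading term
\[
   \int \beta(X/\varepsilon)\,\varphi(X)\,dX = (-1)^r\varepsilon^{r+1/2}\,\sigma_\gamma\int \partial_X B(X)\,\partial_X^r\varphi(X)\,dX + o(\varepsilon^{r+1/2}).
\]

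Finally I would integrate by parts once more, transferring the derivatives back onto $B$ in the distributional sense. Using $\langle \partial_X B,\partial_X^r\varphi\rangle = -\langle B,\partial_X^{r+1}\varphi\rangle$ together with $(-1)^r(-1)=(-1)^{r+1}$, one identifies
\[
   (-1)^r\int \partial_X B(X)\,\partial_X^r\varphi(X)\,dX = (-1)^{r+1}\int B(X)\,\partial_X^{r+1}\varphi(X)\,dX = \int \partial_X^{r+1}B(X)\,\varphi(X)\,dX,
\]
which yields exactly the asserted asymptotics with coefficient $\varepsilon^{r+1/2}\sigma_\gamma$. The main obstacle, and the only point requiring genuine care, is the legitimacy of these manipulations at the level of tempered distributions: $B$ is almost surely only continuous, so $\partial_X^{r+1}B$ exists only distributionally, and one must verify that the $o(\sqrt{\varepsilon})$ error supplied by Lemma~\ref{Lemma3.4} for the fixed Schwartz function $\partial_X^r\varphi$ genuinely multiplies up to an $o(\varepsilon^{r+1/2})$ error in the sense of convergence in law used throughout Section~\ref{homog}. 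Everything else is the bookkeeping of the two integrations by parts and the vanishing of the mean.
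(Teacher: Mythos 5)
Your argument is correct and follows essentially the same route as the paper's: the scaling identity $\beta(X/\varepsilon)=\varepsilon^r\partial_X^r[\gamma(X/\varepsilon)]$, $r$ integrations by parts onto $\varphi$, one further integration by parts to reach the level of the partial-sum process, and the invocation of the Donsker-type limit (the paper cites Lemma~\ref{Lemma3.3} via $Y_\varepsilon(\gamma)$ where you cite its packaged form, Lemma~\ref{Lemma3.4}). Your explicit observation that the mean term drops out because $\partial_X^r\varphi$ integrates to zero for $r\geq 1$ is a useful clarification that the paper leaves implicit.
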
  

\smallskip\noindent
\begin{proof}
Using $\varphi(X)$ as a test function,
\begin{eqnarray}
   \int \beta(\frac{X}{\varepsilon}) \varphi(X) \, dX 
   &=& \int \partial_x^r \gamma(\frac{X}{\varepsilon}) \varphi(X) \, dx
   \nonumber\\
    & =& (-1)^r \varepsilon^r \int \gamma
     (\frac{X}{\varepsilon}) 
       \partial_X^r \varphi(X) \, dX \nonumber\\ 
   &=& (-1)^{r+1} \varepsilon^{r+ 1/2} \sigma_\gamma 
   \int Y_\varepsilon(\gamma)(X) \partial_X^{r+1} \varphi(X) \, dX
   \nonumber \\
   &=& \varepsilon^{r+ 1/2} \sigma_\gamma \int 
       \partial_X^{r+1} B(X) \varphi(X) \, dX 
    + o(\varepsilon^{r+1/2}) ~.  \nonumber   
\end{eqnarray}
\end{proof}

There are  further technical results that we will use repeatedly 
in the analysis of the equations in the KdV asymptotic regime, 
having to do with limits in the sense of tempered distributions 
of products of scaled processes. In this context, consider 
 $\gamma = (\gamma_1, \gamma_2)$ a vector  of stationary processes
which satisfy the mixing conditions 
\eqref{Eq:MixingCondition}\eqref{Eq:MixingRate}. Consider their
product $\gamma_1(X/\varepsilon)\gamma_2((X+ct)/\varepsilon)$
for some nonzero constant $c$ as a tempered distribution in the limit
 $\varepsilon \to 0$ . Define the 
{\em covariance matrix} of the vector process by
$$
   C(\gamma) = \begin{pmatrix} \sigma^2_1 & \rho_{12} \\
                                \rho_{12} & \sigma^2_2 
               \end{pmatrix}
$$
where
\begin{equation}
    \sigma^2_j = 2 \int_0^\infty {\sf E}(\gamma_j(0)\gamma_j(y)) \, dy ,
     \qquad
    \rho_{12} = \rho_{21} = \int_{-\infty}^\infty 
      {\sf E}(\gamma_1(0)\gamma_2(y)) \, dy  ~.
\end{equation}

\begin{Lemma}\label{Lemma3.6a}
If the vector process $\gamma = (\gamma_1, \gamma_2)$ is 
stationary and satisfies the mixing conditions
\eqref{Eq:MixingCondition}\eqref{Eq:MixingRate}, then the process
\begin{equation}
   Y_\varepsilon(\gamma) = \sqrt{\varepsilon} 
     \Big( \int_0^{\frac{X}{\varepsilon}} \gamma_1(y) \, dy ,
           \int_0^{\frac{X}{\varepsilon}} \gamma_2(y) \, dy \Big)
\end{equation}
converges to the two-dimensional Brownian motion 
$B(X) = (B_{1}(X), B_{2}(X))$ with 
covariance matrix $C(\gamma)$. 
\end{Lemma}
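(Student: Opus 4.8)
The plan is to reduce the two-dimensional statement to the scalar invariance principle of Lemma~\ref{Lemma3.3} via the Cram\'er--Wold device. Convergence in law of the vector-valued process $Y_\varepsilon(\gamma)$ to a Gaussian process follows once one establishes convergence in law of every scalar linear combination $a \cdot Y_\varepsilon(\gamma)$, with $a = (a_1,a_2) \in {\mathbb R}^2$ fixed, together with tightness. First I would write
\[
   a \cdot Y_\varepsilon(\gamma)(X)
   = \sqrt{\varepsilon} \int_0^{X/\varepsilon}
     \bigl( a_1 \gamma_1(y) + a_2 \gamma_2(y) \bigr) \, dy
   = \sigma_a \, Y_\varepsilon(\beta_a)(X),
\]
where $\beta_a := a_1 \gamma_1 + a_2 \gamma_2$, $\sigma_a := \sigma_{\beta_a}$, and $Y_\varepsilon(\beta_a)$ is the normalized object of \eqref{Eq:Y-beta}. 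Since $\beta_a$ is a fixed linear combination of the jointly stationary pair $\gamma$, it is a zero-mean stationary process whose generated filtration is contained in that of $\gamma$; hence it inherits the mixing bound \eqref{Eq:MixingCondition} with the same rate $\alpha(y)$ satisfying \eqref{Eq:MixingRate}. Thus Lemma~\ref{Lemma3.3} applies to $\beta_a$ and gives $Y_\varepsilon(\beta_a) \rightharpoonup B_a$, a normalized Brownian motion, so that $a \cdot Y_\varepsilon(\gamma) \rightharpoonup \sigma_a B_a$.

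The next step is to identify the limiting variance $\sigma_a^2$ with the quadratic form of the covariance matrix. From the definition
\[
   \sigma_a^2 = 2 \int_0^\infty {\sf E}\bigl( \beta_a(0) \beta_a(y) \bigr) \, dy,
\]
expanding the product yields the diagonal contributions $a_1^2 \sigma_1^2$ and $a_2^2 \sigma_2^2$ directly. For the cross terms I would use stationarity in the form ${\sf E}(\gamma_2(0)\gamma_1(y)) = {\sf E}(\gamma_1(0)\gamma_2(-y))$, which combines the two half-line integrals into a single integral over all of ${\mathbb R}$:
\[
   2 \int_0^\infty \bigl[ {\sf E}(\gamma_1(0)\gamma_2(y)) + {\sf E}(\gamma_2(0)\gamma_1(y)) \bigr] \, dy
   = 2 \int_{-\infty}^\infty {\sf E}(\gamma_1(0)\gamma_2(y)) \, dy = 2 \rho_{12}.
\]
Collecting terms gives $\sigma_a^2 = a_1^2 \sigma_1^2 + 2 a_1 a_2 \rho_{12} + a_2^2 \sigma_2^2 = a^{\mathrm{T}} C(\gamma) a$, which is precisely the variance at ``time'' $X$ of $a_1 B_1(X) + a_2 B_2(X)$ for a two-dimensional Brownian motion $B = (B_1, B_2)$ with covariance matrix $C(\gamma)$. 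As this holds for every $a$, the Cram\'er--Wold device delivers convergence of the finite-dimensional distributions of $Y_\varepsilon(\gamma)$ to those of $B$; combined with the tightness inherited component-wise from the scalar case, the functional convergence $Y_\varepsilon(\gamma) \rightharpoonup B$ follows.

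I expect the only genuinely non-routine point to be the cross-covariance bookkeeping: the scalar invariance principle naturally produces the half-line integral $2\int_0^\infty$, whereas the off-diagonal entry $\rho_{12}$ is defined as a full-line integral, so reconciling the two hinges on the stationarity identity relating ${\sf E}(\gamma_2(0)\gamma_1(y))$ to ${\sf E}(\gamma_1(0)\gamma_2(-y))$. A secondary caveat is that Lemma~\ref{Lemma3.3} is stated under the assumption $\sigma_\beta \neq 0$; in any degenerate direction for which $\sigma_a = 0$, the scalar process $a \cdot Y_\varepsilon(\gamma)$ converges to the zero process rather than to a nondegenerate Brownian motion, as treated by the $\sigma_\beta = 0$ analysis surrounding Lemma~\ref{Lemma3.5}, which is consistent with $a^{\mathrm{T}} C(\gamma) a = 0$ in that direction. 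The passage from finite-dimensional convergence to functional convergence is routine, since tightness of the ${\mathbb R}^2$-valued paths is equivalent to tightness of each component.
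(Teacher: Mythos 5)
Your proposal is correct, but it supplies an argument where the paper supplies none: the paper's entire ``proof'' of Lemma~\ref{Lemma3.6a} is the remark that ``this result is analogous to Lemma~\ref{Lemma3.3} in the vector process case,'' i.e.\ the multidimensional invariance principle for stationary mixing processes is simply asserted by appeal to the literature (\cite{B68}, \cite{OY72}), including the identification of the limiting covariance matrix as $C(\gamma)$. Your Cram\'er--Wold reduction is the standard way to actually derive the vector statement from the scalar one, and the two genuinely substantive points you isolate are exactly the ones the paper leaves implicit: (i) that a fixed linear combination $\beta_a = a_1\gamma_1 + a_2\gamma_2$ is again stationary and inherits the $\alpha$--mixing bound because its natural filtration sits inside that of the pair, so Lemma~\ref{Lemma3.3} applies; and (ii) the reconciliation of the one-sided integral $2\int_0^\infty {\sf E}(\beta_a(0)\beta_a(y))\,dy$ with the two-sided definition of $\rho_{12}$ via ${\sf E}(\gamma_2(0)\gamma_1(y)) = {\sf E}(\gamma_1(0)\gamma_2(-y))$, which is precisely why the off-diagonal entry of $C(\gamma)$ is defined as a full-line integral while the diagonal entries are twice a half-line integral. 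Your handling of the degenerate directions $\sigma_a=0$ (limit is the zero process, consistent with $a^{\mathrm{T}}C(\gamma)a=0$; one can see this directly from the vanishing of the limiting variance rather than from Lemma~\ref{Lemma3.5}, which concerns the special case of exact derivatives) and of tightness (componentwise tightness suffices for ${\mathbb R}^2$-valued paths) closes the remaining loopholes. In short, the paper buys brevity by citation; your route buys a self-contained verification that the covariance matrix really is the $C(\gamma)$ used downstream in Lemma~\ref{Lemma3.6}.
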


This result is analogous to  Lemma \ref{Lemma3.3} in the
vector process case. From it, we derive the next useful 
result on products of two mixing processes.

\begin{Lemma}\label{Lemma3.6}
Suppose that $(\beta_1(x), \beta_2(x))$ is a
$C^1({\mathbb R})$ vector stationary ergodic process which 
satisfies the mixing condition 
\eqref{Eq:MixingCondition}\eqref{Eq:MixingRate}, and let 
$c$ be a nonzero constant. The new process formed by the product 
$\varepsilon^{-1}\beta_1(X/\varepsilon)\beta_2((X + ct)/\varepsilon)$
converges in the sense of distributions on space-time to products 
of derivatives of a pair of Brownian motions with covariance 
matrix $C(\beta)$. More precisely, for a test function 
$\varphi(X,t) \in {\mathcal S}$ then
\begin{equation}
\begin{array}{c}
  \int \beta_1(\frac{X}{\varepsilon})\beta_2(\frac{X + ct}{\varepsilon})
      \varphi(X,t) \, dXdt  \\
\noalign{\vskip6pt}
  = \varepsilon 
      \int \partial_X B_{1}(X) \partial_X B_{2}(X+ct) 
      \varphi(X,t) \, dXdt + o(\varepsilon),  
\end{array}
  \label{Eq:Process-beta} 
\end{equation}
where the covariance matrix of $(B_{1}(X), B_{2}(X))$
is given by $C(\beta)$. 
In case $\beta_j = \partial_x^{r_j}\gamma_j$ for indices $j = 1,2$,
with $\gamma_j \in C^{r_j+1}({\mathbb R})$ (so that 
$\sigma_{\beta_j} = 0$ if $r_j \not= 0$) the new process satisfies
\begin{equation}
\begin{array}{c}
  \int \beta_1(\frac{X}{\varepsilon})\beta_2(\frac{X + ct}{\varepsilon})
    \varphi(X,t) \, dXdt  \\
\noalign{\vskip6pt}
 = \varepsilon^{r_1+r_2+1} \int \partial_X^{r_1+1} B_{1}(X) 
       \partial_X^{r_2+1} B_{2}(X+ct) \varphi(X,t)
    dXdt + o(\varepsilon^{r_1+r_2+1}),
\end{array}
\end{equation}  
where $(B_{1}(X), B_{2}(X))$ are $C(\gamma)$-correlated.
\end{Lemma}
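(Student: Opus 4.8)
The plan is to deduce both assertions from the vector invariance principle of Lemma~\ref{Lemma3.6a}. Set $Y_{\varepsilon,j}(X)=\sqrt\varepsilon\int_0^{X/\varepsilon}\gamma_j(y)\,dy$, taking $\gamma_j=\beta_j$ in the first assertion, so that Lemma~\ref{Lemma3.6a} gives the joint convergence in law $(Y_{\varepsilon,1},Y_{\varepsilon,2})\rightharpoonup(B_1,B_2)$ to Brownian motions with covariance matrix $C(\gamma)$ (respectively $C(\beta)$). The starting point is the algebraic identity
\[
   \beta_j(\tfrac X\varepsilon)=\varepsilon^{\,r_j+1/2}\,\partial_X^{\,r_j+1}Y_{\varepsilon,j}(X),
\]
which follows from $\beta_j=\partial_x^{r_j}\gamma_j$ and the chain rule, and which requires $\gamma_j\in C^{r_j+1}$ precisely so that $Y_{\varepsilon,j}\in C^{r_j+2}$ and the right-hand side is a genuine continuous function. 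Multiplying the two factors yields
\[
   \beta_1(\tfrac X\varepsilon)\beta_2(\tfrac{X+ct}\varepsilon)
   = \varepsilon^{\,r_1+r_2+1}\,
     \partial_X^{\,r_1+1}Y_{\varepsilon,1}(X)\,
     \partial_X^{\,r_2+1}Y_{\varepsilon,2}(X+ct),
\]
which already exhibits the correct power $\varepsilon^{r_1+r_2+1}$; the hypothesis $c\neq0$ is what keeps the two arguments $X$ and $X+ct$ transverse, and it is exactly this transversality that renders the limiting product $\partial_X^{r_1+1}B_1(X)\,\partial_X^{r_2+1}B_2(X+ct)$ a well-defined tempered distribution on space-time.

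The heart of the reduction is the linear change of variables $(X,t)\mapsto(x_1,x_2):=(X,\,X+ct)$, which has Jacobian $c\neq0$ and turns the product into a genuine tensor product: $Y_{\varepsilon,1}$ depends on $x_1$ alone and $Y_{\varepsilon,2}$ on $x_2$ alone. In these coordinates the identities $\partial_X^{r_1+1}Y_{\varepsilon,1}(X)=\partial_{x_1}^{r_1+1}Y_{\varepsilon,1}(x_1)$ and $\partial_X^{r_2+1}Y_{\varepsilon,2}(X+ct)=\partial_{x_2}^{r_2+1}Y_{\varepsilon,2}(x_2)$ hold, so the first group of derivatives acts only on the first factor and the second group only on the second. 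Testing against $\varphi\in\mathcal{S}$ and integrating by parts separately in $x_1$ and in $x_2$ therefore transfers all $r_1+r_2+2$ derivatives onto the transformed test function, with no Leibniz cross-terms coupling $Y_{\varepsilon,1}$ to $Y_{\varepsilon,2}$, and leaves the bilinear functional
\[
   \int \beta_1(\tfrac X\varepsilon)\beta_2(\tfrac{X+ct}\varepsilon)\,\varphi\,dXdt
   = \varepsilon^{\,r_1+r_2+1}
     \int Y_{\varepsilon,1}(x_1)\,Y_{\varepsilon,2}(x_2)\,\Psi(x_1,x_2)\,dx_1dx_2,
\]
where $\Psi=\partial_{x_1}^{r_1+1}\partial_{x_2}^{r_2+1}\tilde\varphi\in\mathcal{S}(\Real^2)$ and $\tilde\varphi$ is the push-forward of $\varphi$. (For $r_1=r_2=0$, performing the integrations by parts directly in $(X,t)$ gives the explicit weight $\tfrac1c\partial_X\partial_t\varphi-\tfrac1{c^2}\partial_t^2\varphi$ paired with $Y_{\varepsilon,1}(X)Y_{\varepsilon,2}(X+ct)$, recovering the first assertion.)

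The main obstacle is the passage to the limit in this bilinear functional. Lemma~\ref{Lemma3.6a} supplies convergence in law of $(Y_{\varepsilon,1},Y_{\varepsilon,2})$ only in the topology of uniform convergence on compact sets, whereas the functional $(P,Q)\mapsto\int P(x_1)Q(x_2)\Psi\,dx_1dx_2$ is unbounded on path space, pairing a product of processes of Brownian growth against a Schwartz weight over all of $\Real^2$; the continuous mapping theorem thus does not apply directly. I would resolve this by truncation: for $R>0$ the functional restricted to $\{|x_1|,|x_2|\le R\}$ is continuous for uniform-on-compacts convergence, hence converges in law by the continuous mapping theorem, while the discarded tails are bounded uniformly in $\varepsilon$ by the variance estimate ${\sf E}|Y_{\varepsilon,j}(x)|^2\le C|x|$ (valid under the mixing rate \eqref{Eq:MixingRate}, cf.~\cite{B68}) together with Cauchy--Schwarz and the rapid decay of $\Psi$. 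Sending $R\to\infty$ after $\varepsilon\to0$, via the standard approximation theorem for weak convergence, yields convergence to $\int B_1(x_1)B_2(x_2)\Psi\,dx_1dx_2$. Reversing the integrations by parts at the level of $B_1,B_2$—legitimate since the tensor-product structure and the transversality identity $\partial_X B_2(X+ct)=c^{-1}\partial_t B_2(X+ct)$ hold verbatim in the distributional sense—re-expresses this limit as the pairing of $\partial_X^{r_1+1}B_1(X)\,\partial_X^{r_2+1}B_2(X+ct)$ with $\varphi$, which together with the prefactor $\varepsilon^{r_1+r_2+1}$ is exactly the claimed asymptotics.
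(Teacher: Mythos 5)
Your proposal is correct and follows essentially the same route as the paper: write each factor as a derivative of the partial-sum process $Y_{\varepsilon,j}$, decouple the two arguments via the change of variables $(X,t)\mapsto (X, X+ct)$ with Jacobian $c\neq 0$, integrate by parts to put all derivatives on the test function, and invoke the vector invariance principle of Lemma~\ref{Lemma3.6a}. The only point where you go beyond the paper is the truncation-plus-variance-bound argument for passing to the limit in the bilinear functional $\int Y_{\varepsilon,1}(x_1)Y_{\varepsilon,2}(x_2)\Psi\,dx_1dx_2$ --- the paper simply asserts that this expression is a ``continuous function of the processes,'' which, as you observe, is not literally true for the uniform-on-compacts topology when the integral extends over all of $\Real^2$, so your tail estimate via ${\sf E}|Y_{\varepsilon,j}(x)|^2\le C|x|$, Cauchy--Schwarz and the decay of $\Psi$ is a genuine (and welcome) repair of that step rather than a departure from the argument.
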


\noindent
\begin{proof}
Start with the case in which both $\sigma_{\beta_j}$ are nonzero, and
write
\begin{equation}
\begin{array}{l}
  \int \beta_1(\frac{X}{\varepsilon})\beta_2(
\frac{X+ct}{\varepsilon}) \varphi(X,t)
      \, dX dt 
    = \int \beta_1(\frac{X}{\varepsilon})\beta_2(\frac{X'}{\varepsilon})
    \varphi(X, \frac{X'-X}{c}) \, \frac{dX dX'}{c} \nonumber\\
\noalign{\vskip7pt}
   = \varepsilon^2 \int \partial_X \Bigl( \int_0^{\frac{X}{\varepsilon}}
  \beta_1(\tau) \, d\tau \Bigr) \partial_{X'} \Bigl( \int_0^{
\frac{X'}{\varepsilon}}
  \beta_2(\tau') \, d\tau' \Bigr) \varphi(X, \frac{X'-X}{c}) \,
  \frac{dXdX'}{c}     \\
 \noalign{\vskip7pt}
   = \varepsilon    
    \int \Bigl( \sqrt{\varepsilon} \int_0^{
\frac{X}{\varepsilon}} \beta_1(\tau) \, d\tau \Bigr) 
         \Bigl( \sqrt{\varepsilon} \int_0^{
\frac{X'}{\varepsilon}}\beta_2(\tau') \, d\tau' \Bigr) 
            \partial_X\partial_{X'}\varphi(X, \frac{X'-X}{c}) \,
    \frac{dXdX'}{c}. 
\end{array}
\end{equation} 
The latter expression is a continuous function of the processes
$Y_\varepsilon(\beta) = (Y_\varepsilon(\beta_1), Y_\varepsilon(\beta_2))$ 
of equation \eqref{Eq:Y-beta}, which itself converges in law to 
two-dimensional Brownian motion with covariance matrix $C(\beta)$ as 
described by Donsker's invariance principle. Therefore the 
asymptotic expression for \eqref{Eq:Process-beta} is given by 
\begin{equation}
   \varepsilon \int
     \partial_X B_{1}(X) \partial_X B_{2}(X+ct) \varphi(X, t) \,
     dXdt ~
\end{equation}
where $B_1(X)$ and $B_2(X)$ are two copies of Brownian
motions with the correlation matrix $C(\beta)$.
The general case reduces to the above particular case through 
integrations by parts. Indeed
\begin{eqnarray}
   \int \beta_1(\frac{X}{\varepsilon})\beta_2(
\frac{X + ct}{\varepsilon})
      \varphi(X,t) \, dXdt   \nonumber\\
  = \varepsilon^{r_1+r_2} \int \partial_X^{r_1}\gamma_1(\frac{X}{\varepsilon})
      \partial_X^{r_2} \gamma_2(
\frac{X + ct}{\varepsilon}) \varphi(X,t) \, dXdt \cr
   = (-1)^{r_1+r_2} \frac{\varepsilon^{r_1+r_2}}{ c^{r_2}} \int
      \gamma_1(\frac{X}{\varepsilon}) \gamma_2(
\frac{X+ct}{\varepsilon}) 
      \partial_X^{r_1} \partial_t^{r_2} \varphi(X,t) \, dXdt,
\end{eqnarray}
which reduces the problem to the previous case. 
\end{proof}


There is another integral that needs to be evaluated in our further
analysis. It has the form
\begin{equation}\label{test_integral}
\int_{X,t} \int_X^{X+\sqrt{gh}t} \beta\big(\frac{X}{\varepsilon})
\beta\big(\frac{\theta}{\varepsilon}) \varphi(\theta,X,t) d\theta dXdt .
\end{equation}
The next lemma shows that such integrals have  probability
measures whose weak limits converge with order at least $\BigOh{\varepsilon}$.

\begin{Lemma}\label{Lemma3.8}
Suppose that $(\beta_1(x), \beta_2(x))$ is a $C^1({\mathbb{R}})$
 vector stationary ergodic
process  which satisfy the mixing conditions
\eqref{Eq:MixingCondition} and  \eqref{Eq:MixingRate}. For test functions
$\varphi(\theta, X,t) \in {\mathcal S}$,

\begin{equation}
\int dX dt \int_X^{X+\sqrt{gh}t} \left[
\beta_1\big(\frac{X}{\varepsilon})\beta_2\big(\frac{\theta}{\varepsilon})
+
\beta_2\big(\frac{X}{\varepsilon})\beta_1\big(\frac{\theta}{\varepsilon})
 \right]
\varphi(\theta,X,t) d\theta
= \BigOh{\varepsilon}.\label{test_integral2}
\end{equation}
\end{Lemma}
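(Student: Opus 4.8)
The plan is to reduce the statement to the Donsker-type convergence already established in Lemma~\ref{Lemma3.3} and its vector form Lemma~\ref{Lemma3.6a}, by integrating by parts twice so as to trade each fast factor $\beta_j(\cdot/\varepsilon)$ for a slowly varying antiderivative. Set $\Gamma_\varepsilon^{(j)}(z) = \sqrt{\varepsilon}\int_0^{z/\varepsilon}\beta_j(\tau)\,d\tau = \sigma_{\beta_j}\,Y_\varepsilon(\beta_j)(z)$, so that $\beta_j(z/\varepsilon) = \sqrt{\varepsilon}\,\partial_z\Gamma_\varepsilon^{(j)}(z)$ and, by Lemma~\ref{Lemma3.6a}, the pair $\big(\Gamma_\varepsilon^{(1)},\Gamma_\varepsilon^{(2)}\big)$ converges in law to a $C(\beta)$-correlated two-dimensional Brownian motion. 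I would first integrate the inner $\theta$-integral by parts, which turns $\int_X^{X+\sqrt{gh}t}\beta_2(\theta/\varepsilon)\varphi\,d\theta$ into $\sqrt{\varepsilon}$ times a bounded functional of $\Gamma_\varepsilon^{(2)}$, exhibiting the self-averaging of the inner integral at order $\BigOh{\sqrt{\varepsilon}}$. Multiplying by $\beta_1(X/\varepsilon) = \sqrt{\varepsilon}\,\partial_X\Gamma_\varepsilon^{(1)}(X)$ and integrating by parts once more in $X$ extracts a second factor $\sqrt{\varepsilon}$, leaving an overall prefactor $\varepsilon$ in front of an integrand that is a continuous functional of $\big(\Gamma_\varepsilon^{(1)},\Gamma_\varepsilon^{(2)}\big)$. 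The continuous mapping theorem then gives convergence in law of the tested quantity divided by $\varepsilon$, which is exactly the assertion that it is of order $\BigOh{\varepsilon}$.

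To pin down the limit and confirm the order is $\BigOh{\varepsilon}$ rather than larger, I would compute the expectation directly. By stationarity, ${\sf E}\big(\beta_1(X/\varepsilon)\beta_2(\theta/\varepsilon)\big) = \rho_{12}\big((\theta-X)/\varepsilon\big)$ with $\rho_{12}(y)={\sf E}(\beta_1(0)\beta_2(y))$, while the symmetric term contributes $\rho_{12}\big(-(\theta-X)/\varepsilon\big)$. After the substitution $s=\theta-X$ followed by $z=s/\varepsilon$, the two contributions combine into $\varepsilon\int_{-\infty}^{+\infty}\rho_{12}(z)\,dz$ times $\int \varphi(X,X,t)\,dXdt$ in the limit, the integrability of $\rho_{12}$ being guaranteed by the mixing rate \eqref{Eq:MixingRate}. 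This is precisely where the symmetrization $\beta_1\beta_2+\beta_2\beta_1$ is essential: it replaces the one-sided integral of the cross-covariance by the full-line integral that defines the off-diagonal entry $\rho_{12}$ of $C(\beta)$, yielding a clean leading coefficient at order $\varepsilon$.

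The delicate point, and the step I expect to be the main obstacle, is the control of the boundary contributions generated by the second integration by parts, together with the near-diagonal region $\theta\to X$. Moving $\partial_X$ onto the $\Gamma_\varepsilon^{(2)}$-functional hits the endpoint term $\Gamma_\varepsilon^{(2)}(X+\sqrt{gh}t)$ and regenerates a fast factor $\beta_2\big((X+\sqrt{gh}t)/\varepsilon\big)$ at order $\varepsilon^{-1/2}$; one must check that, paired with the $\BigOh{1}$ process $\Gamma_\varepsilon^{(1)}(X)$, this is again of the product type controlled by Lemma~\ref{Lemma3.6} and Lemma~\ref{Lemma3.4}, and hence still $\BigOh{\varepsilon}$. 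Likewise, the small-separation region, where the two fast processes are strongly correlated rather than decorrelated, must be shown to contribute only at the same order; this follows because $\rho_{12}(s/\varepsilon)$ is effectively supported in an $s$-window of width $\BigOh{\varepsilon}$, so that the near-diagonal part is already captured by the finite constant $\rho_{12}$ above. Finally, the tightness needed to upgrade convergence of moments to convergence in law is supplied by the joint convergence in Lemma~\ref{Lemma3.6a}, since after the integrations by parts the whole expression is a continuous bounded functional of $\big(\Gamma_\varepsilon^{(1)},\Gamma_\varepsilon^{(2)}\big)$.
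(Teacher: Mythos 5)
Your overall strategy is exactly the paper's: write $\beta_j(z/\varepsilon)=\sqrt{\varepsilon}\,\partial_z\Gamma_\varepsilon^{(j)}(z)$ with $\Gamma_\varepsilon^{(j)}(z)=\sqrt{\varepsilon}\int_0^{z/\varepsilon}\beta_j$, integrate by parts once in $\theta$ and once in $X$ to extract the prefactor $\varepsilon$, and invoke Donsker's invariance principle (Lemma~\ref{Lemma3.6a}) together with the continuous mapping theorem for the resulting functionals of $(\Gamma_\varepsilon^{(1)},\Gamma_\varepsilon^{(2)})$. You also correctly flag the boundary terms as the delicate step. However, there is a genuine gap in how you resolve that step, and it concerns the one term for which the symmetrization in \eqref{test_integral2} is actually indispensable.

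The second integration by parts produces \emph{two} fast boundary factors, not one: besides the term at $\theta=X+\sqrt{gh}t$ that you discuss (which is handled by yet another integration by parts, writing $\varepsilon^{-1/2}\beta_j((X+\sqrt{gh}t)/\varepsilon)=(gh)^{-1/2}\partial_t\Gamma_\varepsilon^{(j)}(X+\sqrt{gh}t)$ and moving $\partial_t$ onto the test function --- the two evaluation points being separated by $\sqrt{gh}\,t$), there is the diagonal term
$\Gamma_\varepsilon^{(i)}(X)\,\varepsilon^{-1/2}\beta_j(X/\varepsilon)\,\varphi(X,X,t)
=\Gamma_\varepsilon^{(i)}(X)\,\partial_X\Gamma_\varepsilon^{(j)}(X)\,\varphi(X,X,t)$,
in which both factors are evaluated at the \emph{same} point. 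For a single ordering $(i,j)$ this is a stochastic integral $\int \Gamma_\varepsilon^{(i)}\,d\Gamma_\varepsilon^{(j)}\,\varphi$, which is \emph{not} a continuous functional of the paths in the uniform topology, so your concluding claim that ``after the integrations by parts the whole expression is a continuous bounded functional of $(\Gamma_\varepsilon^{(1)},\Gamma_\varepsilon^{(2)})$'' fails precisely there, and the continuous mapping theorem cannot be applied term by term. The paper's proof uses the symmetrization at exactly this point: summing the $(1,2)$ and $(2,1)$ diagonal contributions gives the exact derivative $\partial_X\bigl(\Gamma_\varepsilon^{(1)}\Gamma_\varepsilon^{(2)}\bigr)\varphi(X,X,t)$, which after one further integration by parts \emph{is} a continuous functional of the pair of paths, hence $\BigOh{\varepsilon}$. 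In your write-up the symmetrization is instead spent on the expectation computation (turning the one-sided covariance integral into $\rho_{12}$), and the near-diagonal region is controlled only through the support of $\rho_{12}(s/\varepsilon)$ --- an argument about the mean, which does not by itself give the weak convergence of the laws of $\varepsilon^{-1}$ times the tested quantity that the paper's notion of $\BigOh{\varepsilon}$ requires. To close the proof you need to route the symmetrization through the diagonal boundary term as above.
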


\begin{proof}
The integral is written as the sum of two terms, each one of the form
\begin{equation}
\begin{array}{l}
\int dXdt \int_X^{X+\sqrt{gh}t} \beta_i\big(\frac{X}{\varepsilon})
\beta_j\big(\frac{\theta}{\varepsilon}) \varphi(\theta,X,t) d\theta
\\
\noalign{\vskip6pt}
= \varepsilon \int dXdt \int_X^{X+\sqrt{gh}t}
\partial_X \Big( \sqrt{\varepsilon} \int_0^{\frac{X}{\varepsilon}} \beta_i(s) ds\Big)
\partial_\theta \Big( \sqrt{\varepsilon} \int_0^{\frac{\theta}{\varepsilon}} \beta_j(s)
ds\Big)
\varphi(\theta,X,t) d\theta
 \\
\noalign{\vskip6pt}
 = \varepsilon \int dXdt \int_X^{X+\sqrt{gh}t}
\Big( \sqrt{\varepsilon} \int_0^{\frac{X}{\varepsilon}} \beta_i(s) ds\Big)
\Big( \sqrt{\varepsilon} \int_0^{\frac{\theta}{\varepsilon}}\beta_j(s) ds\Big)
\partial_{X\theta} \varphi(\theta,X,t) d\theta
\\
\noalign{\vskip6pt}
\qquad + \varepsilon \int dXdt
\Big( \sqrt{\varepsilon} \int_0^{\frac{X}{\varepsilon}} \beta_i(s) ds\Big)
 \Big[\Big(\sqrt{\varepsilon} \int_0^{\frac{
X+\sqrt{gh}t}{\varepsilon}} \beta_j(s)
ds\Big)
\partial_{\theta}\varphi(X+\sqrt{gh}t,X,t)
 \\
\noalign{\vskip6pt}
\qquad\qquad\qquad\qquad\qquad
- \Big( \sqrt{\varepsilon} \int_0^{\frac{X}{\varepsilon}} \beta_j(s) ds\Big)
\partial_{\theta}\varphi(X,X,t) \Big] \
 \\
\noalign{\vskip6pt}
\qquad - \varepsilon \int dXdt
\Big( \sqrt{\varepsilon} \int_0^{\frac{X}{\varepsilon}} \beta_i(s) ds\Big)
 \Big[\Big( \sqrt{\varepsilon} \int_0^{\frac{X+\sqrt{gh}t}{\varepsilon}}
\beta_j(s) ds\Big)
\partial_{X}\varphi(X+\sqrt{gh}t,X,t)
\\
\noalign{\vskip6pt}
\qquad\qquad\qquad\qquad\qquad
- \Big( \sqrt{\varepsilon} \int_0^{\frac{X}{\varepsilon}} \beta_j(s) ds\Big)
\partial_{X}\varphi(X,X,t) \Big]
\\
\noalign{\vskip6pt}
\qquad -\varepsilon \int dXdt
\Big( \sqrt{\varepsilon} \int_0^{\frac{X}{\varepsilon}} \beta_i(s) ds\Big)
\Big[ \frac{1}{\sqrt{\varepsilon}} \beta_j(\frac{X+\sqrt{gh}t}{\varepsilon})
\varphi(X+\sqrt{gh}t,X,t)
\\
\noalign{\vskip6pt}
\qquad\qquad\qquad\qquad\qquad\qquad\qquad
- \frac{1}{\sqrt{\varepsilon}} \beta_j(\frac{X}{\varepsilon} )
\varphi(X,X,t) \Big]
 \\
\noalign{\vskip6pt}
\end{array}
\end{equation}
with $i,\,j\in\{1,2\}$ and $i\neq j$.
All of the terms have distributional limits which are at least
$\BigOh{\varepsilon}$. Simple cases which illustrate the estimate are:

\begin{equation}
\begin{array}{lll}
I:&=&
\varepsilon \int dXdt\left[
\Big( \sqrt{\varepsilon} \int_0^{\frac{X}{\varepsilon}} \beta_1(s) ds\Big)
\frac{1}{\sqrt{\varepsilon}} \beta_2(\frac{X}{\varepsilon} )+
\Big( \sqrt{\varepsilon} \int_0^{\frac{X}{\varepsilon}} \beta_2(s) ds\Big)
\frac{1}{\sqrt{\varepsilon}} \beta_1(\frac{X}{\varepsilon})\right]
\varphi(X,X,t)\\
\noalign{\vskip6pt}
&=& \frac{\varepsilon}{2}  \int dXdt\partial_X
\Big( \sqrt{\varepsilon} \int_0^{\frac{X}{\varepsilon}} \beta_1(s) ds
  \sqrt{\varepsilon}   \int_0^{\frac{X}{\varepsilon}} \beta_2(s) ds
\Big)
\varphi(X,X,t)
\\
\noalign{\vskip6pt}
& =&-\frac{\varepsilon}{2} \int dXdt
\Big( \sqrt{\varepsilon} \int_0^{\frac{X}{\varepsilon}} \beta_1(s) ds
 \sqrt{\varepsilon}\int_0^{\frac{X}{\varepsilon}} \beta_2(s) ds
\Big)
\partial_X \varphi(X,X,t).
\end{array}
\end{equation}
\begin{equation}
\begin{array}{lll}
II:&=&
\varepsilon \int dXdt
\Big( \sqrt{\varepsilon} \int_0^{\frac{X}{\varepsilon}} \beta_i(s) ds\Big)
\frac{1}{\sqrt{\varepsilon}} \beta_j(\frac{X+\sqrt{gh}t}{\varepsilon})
\varphi(X+\sqrt{gh}t,X,t)
 \\
\noalign{\vskip6pt}
& =&-\varepsilon\int dXdt
\Big( \sqrt{\varepsilon} \int_0^{\frac{X}{\varepsilon}} \beta_i(s) ds\Big)
\frac{1}{\sqrt{gh}} \partial_t \
\Big( \sqrt{\varepsilon} \int_0^{
\frac{X+\sqrt{gh}t}{\varepsilon}} \beta_j(s) ds\Big)
\varphi(X+\sqrt{gh}t,X,t)
\\
\noalign{\vskip6pt}
&=&- \frac{\varepsilon}{\sqrt{gh}} \int dXdt
\Big( \sqrt{\varepsilon} \int_0^{\frac{X}{\varepsilon}} \beta_i(s) ds\Big)
\Big( \sqrt{\varepsilon} \int_0^{\frac{X+\sqrt{gh}t}{\varepsilon}}
 \beta_j(s) ds\Big) \partial_t \varphi(X+\sqrt{gh}t,X,t).
\end{array}
\nonumber
\end{equation}
In these expressions, notice that the factors that appear are
continuous functionals on path space. 
Therefore, as $\varepsilon \mapsto 0$, they converge in law to 
functionals of Brownian motions \cite{B68}.  
The other remaining terms are easy to estimate.
\end{proof}


\subsection{Random characteristic coordinates}
\label{Subsection:RandomCharacteristicCoordinates}

Our method to derive the long-wave limit gives rise to a version
of the KdV equation which has coefficients which are realization
dependent. That is, the approximation process does not fully
homogenize, and there are persistent, realization dependent effects
that are as important as the classical effects of dispersion and of
nonlinear interactions. The principal manifestation of this is the
random overall wavespeed, expressed in the limit as 
$\varepsilon \to 0$ as 
\begin{equation}\label{Eq:Characteristics-0}
   c_0(X,\omega)
 =   \sqrt{gh}\Bigl( 1 
      - \frac{\varepsilon^{3/2}\sigma_\beta}{2h} 
        \partial_X B(X) -  \varepsilon^2 a_{KdV} \Bigr) ~.
\end{equation}
The constant $a_{KdV}$ is an adjustment to the characteristic velocity 
that is to be determined by an asymptotic analysis.
The normally expected procedure is to solve the characteristic
equations with this given wavespeed;
\begin{equation}\label{Eq:CharacteristicVectorField}
   \frac{dX}{dt} = c_0(X,\omega) ~, \qquad X(0) = Y ~,
\end{equation}
to obtain characteristic coordinates $(Y,t)$ describing a net
translational motion about which the more subtle nonlinear dispersive
evolution takes place. In the context of a random bottom environment, 
however, the characteristic velocity field $c_0(X,\omega)$ in 
\eqref{Eq:Characteristics-0} has a component which is white noise, 
and when the flow of the characteristic vector field 
\eqref{Eq:CharacteristicVectorField} is required, 
\eqref{Eq:Characteristics-0} is too singular to be able 
to make sense of a  solution. 

Our derivation of the KdV equation is nonetheless 
performed in characteristic coordinates. To do this, 
our alternative strategy is to use a natural regularization of the 
characteristic wavespeed given in \eqref{Eq:Characteristics-0} as
an approximation, and to consider the characteristic coordinates 
indicated by \eqref{Eq:CharacteristicVectorField} to be the limit 
as $\varepsilon \to 0$ of a sequence of more regular flows.
The regularized characteristic vector field that we use is 
\begin{eqnarray}\label{reguflow}
  && \frac{dX}{dt}  =  c_\varepsilon(X, \omega) :=
  \sqrt{gh}\Bigl( 1 - \frac{\varepsilon}{2h} \beta(\frac{X}{\varepsilon}) 
      -  \varepsilon^2 a_{KdV} \Bigr)
    \nonumber \\
   && X(0)  =  Y. \nonumber 
\end{eqnarray}
We remark that as long as $\beta(x, \omega) \in C^1({\mathbb R})$ for 
${\sf P}$-a.e. realization $\omega$, the characteristic vector field 
$c_\varepsilon(X,\omega)$ is $C^1$, and for a given realization
$\omega$ it is uniformly so in $\varepsilon$. Therefore the flows 
$X(t) = \Phi_t^\varepsilon(Y, \omega)$ exist for all $\varepsilon$, 
and lie in a bounded subset of $C^1$. The characteristics $X(t)$ are 
themselves $C^1$, and they are ordered by their initial values; if 
$Y_1 < Y_2$ then for all $t$, $X_1 = \Phi_t^\varepsilon(Y_1, \omega) 
< X_2 = \Phi_t^\varepsilon(Y_2, \omega)$. As $\varepsilon \to 0$ 
there will normally not be a $C^1$ limit of the flows, but by 
standard compactness arguments there are limits $\Phi_t^0(Y, \omega)$ 
in any $C^\alpha({\mathbb R})$, $ 0 \leq \alpha < 1$ which converge 
uniformly on compact sets, and which preserve the 
ordered property of the characteristics. Each such limit 
$X = \Phi_t^0(Y, \omega)$ can be taken to be a well-defined 
continuous and continuously invertible transformation.

To understand the asymptotic behavior of the  transformation to characteristic 
coordinates, write
\begin{equation}
   \Phi_t^\varepsilon(Y, \omega) = X^0(t) 
   + \varepsilon X^1(t) + \varepsilon^2 X^2(t) + \dots,
\end{equation}
where $X^0(0) = Y$, and $X^j(0) = 0$ for $j \geq 1$ provide the
initial conditions for the flow. Substituting this into the 
characteristic equation gives the result that
\begin{eqnarray}
   \frac{dX^0(t)}{dt} & = & \sqrt{gh} ~, \qquad 
      X^0(t) = Y + t\sqrt{gh}, \\
   \frac{dX^1(t)}{dt} & = & -\frac{1}{2}\sqrt{\frac{g}{h}} 
      \beta(X^0(t) / \varepsilon, \omega)  \nonumber \\
     &=& -\frac{1}{2}\sqrt{\frac{g}{h}} 
      \beta((Y + t\sqrt{gh}) / \varepsilon, \omega),
\end{eqnarray} 
thus, variations to the characteristics are given by
\begin{equation}
   X^1(t) =  -\frac{\varepsilon}{2h}
     \int_{Y/\varepsilon}^{(Y + t\sqrt{gh})/\varepsilon} 
     \beta(s, \omega) \, ds ~.
\end{equation}
The final term relevant to our considerations is 
\begin{equation}
\frac{dX^2(t)}{dt} = - \sqrt{gh}  ~a_{KdV}
\end{equation}
which integrates simply to 
$X^2(t) = - \sqrt{gh}a_{KdV} ~t.$
Studying the integral expressions for $X^1(t)$ more closely, 
we find that  
\begin{equation}
    X^1(t) =  - \frac{\varepsilon}{2h} \Bigl( 
      \int_0^{(Y + t\sqrt{gh})/\varepsilon} \beta(s) \, ds 
      - \int_0^{Y/\varepsilon} \beta(s) \, ds \Bigr) ,
\end{equation}
which converges in law to Brownian motion as $\varepsilon \to 0$, 
according to our discussion in Section~\ref{Section:ScaleSeparation}. 
Hence
\begin{equation}
    X^1(t) = 
    - \frac{\sqrt{\varepsilon}\sigma_\beta}{2h}
    \bigl( B(Y+t\sqrt{gh}) - B(Y)  \bigr) 
    + o(\sqrt{\varepsilon}) ~.
\end{equation}
In particular, the term $\varepsilon X^1(t)$ contributes at order
$\varepsilon^{3/2}$. Due to Brownian scaling and
to the property of independence of increments, 
\begin{equation}
   X^1(t) = 
  - \frac{\sqrt{\varepsilon}\sigma_\beta}{2h}
   B_{\omega(Y)}(t\sqrt{gh}) 
   = - \sqrt{\varepsilon} \Bigl( \frac{\sigma_\beta}{2h} \sqrt[4]{gh}
        \Bigr) B_{\omega(Y)}(t) ~.
\end{equation} 
We note that the realizations $\omega(Y)$ of Brownian motion depend 
on the different initial positions $Y$, and in particular that for
distinct initial points $Y_1$ and $Y_2$ the selection of realizations 
$B_{\omega(Y_1)}(t)$ and  $B_{\omega(Y_2)}(t)$
of Brownian motion are independent, as long as $Y_2 - Y_1 > \sqrt{gh}t$.  
Putting this information together, an expression for the
characteristic flow is given by 
\begin{eqnarray} 
  &&  X(t,Y; \varepsilon, \omega)  = Y + t\sqrt{gh} 
    - \frac{\varepsilon^2}{2h}
      \int_{Y/\varepsilon}^{(Y + t\sqrt{gh})/\varepsilon} 
      \beta(s, \omega) \, ds    \cr
&& - \sqrt{gh} ~a_{KdV}
      \varepsilon^2 t + \cdots ~.
\end{eqnarray}
As $\varepsilon$ tends to $0$, the characteristics tend to the 
limiting distribution of paths given by 
\begin{equation} 
  X(t,Y; \omega) = Y + t\sqrt{gh} 
    - \frac{\varepsilon^{3/2}\sigma_\beta}{2h}
      \sqrt[4]{gh}B_{\omega(Y)}(t)   
 - \varepsilon^2 \sqrt{gh}a_{KdV} t
     + \cdots ~. 
\end{equation} 
Inverting the expression gives a formula for $Y$ in terms of $X$ and $t$; 
\begin{eqnarray} 
  &&  Y(t,X; \varepsilon, \omega) 
    = \Phi_{-t}^\varepsilon(X; \omega) = X - t\sqrt{gh} 
    + \frac{\varepsilon^2}{2h} 
      \int_{(X - t\sqrt{gh})/\varepsilon}^{X /\varepsilon}
      \beta(s, \omega) \, ds       \cr
  && + \varepsilon^2 \sqrt{gh}
a_{KdV}  t
     + \cdots ~.
\end{eqnarray}
As $\varepsilon$ tends to $0$,
\begin{eqnarray} 
  && Y(t, X; \omega) = X - t\sqrt{gh} +
 \frac{\varepsilon^{3/2}\sigma_\beta}{2h}
      \sqrt[4]{gh} B_{\omega(X)}(t) 
+ \sqrt{gh} ~a_{KdV} \varepsilon^2 t + \cdots ~.    \nonumber 
\end{eqnarray}
The Jacobian of the flow has the following asymptotic expansion
\begin{equation}\label{Eq:JacobianAsymptotics}
   \frac{dX}{dY} = 1 - \frac{\varepsilon}{2h}
    \Bigl[\beta( \frac{Y +\sqrt{gh}t }{\varepsilon}) - \beta(\frac{Y}{\varepsilon})\Bigr]
   = 1 + \BigOh{\varepsilon} ~.
\end{equation}
In the limit as $\varepsilon$ tends to zero, the Jacobian 
\eqref{Eq:JacobianAsymptotics}, when multiplying a test function, 
behaves asymptotically as
\begin{equation}
  \frac{dX}{dY} \sim 1 - \frac{\varepsilon^{3/2}\sigma_\beta}{2h}
      \sqrt[4]{gh} \partial_X B_{\omega(Y)}(t) .    
\end{equation}


\section{Boussinesq regime}
\setcounter{equation}{0}

We now return to the expression (\ref{Eq:scaledhamil}) for the scaled 
Hamiltonian, in order to give a formal derivation of the appropriate 
Boussinesq system in this regime.
Recall that $\beta$ is a mean zero, stationary mixing process
 with correlation function $\rho_\beta$.
Using the analysis of the previous section, we write the leading order
contributions of the second and fourth terms  of (\ref{Eq:scaledhamil})
 in the form
\begin{equation}\label{Eq:Integral1}
   \int_{-\infty}^{+\infty}  \beta(\frac{X}{\varepsilon}, \omega)
 |D_X \xi(X)|^2 \, 
   dX ~ = \sqrt{\varepsilon}\sigma_\beta 
\int \partial_X B(X) |D_X \xi(X)|^2
   + o(\sqrt{\varepsilon})
\end{equation}
and 
\begin{equation}\label{Eq:Integral2}
   \int_{-\infty}^{+\infty}  
   \bigl( \beta D_x \tanh(hD_x)\beta \bigr) (\frac{X}{\varepsilon}) |D_X
   \xi(X)|^2 \, dX ~ = {\sf E}(\beta D_x \tanh(hD_x) \beta ) \int
 |D_X\xi(X)|^2 \, dX ~,
\end{equation}
where as in \eqref{Eq:StationaryFourierMultipliers} we calculate that 
\begin{equation}
  {\sf E}(\beta D_x \tanh(hD_x) \beta ) = (D_y  \tanh(hD_y)
  \rho_\beta)(0) := a_\beta.
\end{equation} 
The constant $a_\beta$ will contribute to  an adjustment
 of the linear wavespeed in the Boussinesq  regime.
The equalities in (\ref{Eq:Integral1})(\ref{Eq:Integral2})  
are to be taken in the sense of  law of the
corresponding random processes.

We now include these expansions into the Hamiltonian, retaining terms up to order 
${\mathcal O}(\varepsilon^5)$ and dropping those of higher order
\begin{eqnarray}
   &&H= \frac{\varepsilon^3}{2} \int \Big( (h - \varepsilon^{3/2} 
   \sigma_\beta \partial_X B(X) -\varepsilon^2 a_\beta)
 |D_X\xi|^2 + g \eta^2 \Big)dX
 \cr
&& \qquad +\frac{\varepsilon^5 }{2} 
\int  (\xi D_X \eta D_X \xi- \frac{h^3}{3} \xi D_X^4 \xi)  dX +
 o(\varepsilon^5).
\end{eqnarray} 
We note the r\^ole of a stochastic {\sl effective depth}
played by 
\begin{equation} 
  h_0(X) = h - \varepsilon^{3/2} \sigma_\beta
   \partial_X B(X) -\varepsilon^2 a_\beta +o(\varepsilon^2)
\end{equation}
which is a function of the long length scale variables alone.
Since it is normally not necessary to introduce characteristic 
coordinates in this derivation, a regularization such as described in 
section~\ref{Subsection:RandomCharacteristicCoordinates} is not required, 
and the limiting effective depth $h_0(X)$ is used directly in the averaged 
Hamiltonian. 

Changing the variables  $(\eta,\xi)$ to $(\eta, u = \partial_{X}
\xi)$, 
the Hamiltonian becomes
\begin{equation}\label{Hamil-eta-u}
   H_1= 
     \frac{\varepsilon^3}{2} \int\Big( h_0(X) u^2 + g \eta^2 
   - \varepsilon^2 ( \frac{ h^3}{3} (\partial_{X} u)^2 -
     \eta u^2 ) \Big)dX.
\end{equation}  
The symplectic structure  has to be modified  accordingly as in \cite{CGK05}.
Consider the transformation
$w \to v= f(w)$, which transforms Hamilton's equations
\begin{equation}
   \partial_t w
   = J \delta_w H(w)
\end{equation}
to the form 
\begin{equation}
   \partial_t  v
   = J_1\delta_v H_1(v)
\end{equation}
with a new symplectic structure 
\begin{equation}
   J_1= \partial_w f J (\partial_w f)^{\top} ~,
\end{equation}
where $\partial_w f$ is the Jacobian of the map $f$.
In our case,  $ w = \begin{pmatrix} \eta \\ \xi 
               \end{pmatrix} $,  $ ~v =  \begin{pmatrix} \eta \\ u
               \end{pmatrix} = \begin{pmatrix} I & 0 \\ 0 & \partial_X
\end{pmatrix} w $,
and the matrix 
$J = \varepsilon^{-3}  
     \begin{pmatrix} 0 & I \\ -I & 0
     \end{pmatrix} $
is transformed to $
   J_1 = \varepsilon^{-3}\begin{pmatrix} 0 & -\partial_X \\ -\partial_X & 0
         \end{pmatrix} $, where the power of $\varepsilon$ is due to
         the scaling transformations \eqref{BouScal}. 
The evolution equations take the form
\begin{equation}\label{Eq:HamiltonsEquations1}
   \partial_t  \begin{pmatrix} \eta \\ u 
               \end{pmatrix} 
   = J_1 \,
     \begin{pmatrix} \delta_\eta H_1  \\ \delta_u H_1 
     \end{pmatrix}.
\end{equation}
In the end we find the Boussinesq system in the form
\begin{eqnarray}\label{Boussinesqsystem}
   \partial_t \eta &=& -\partial_{X} ((h_0(X) +\varepsilon ^2 \eta) u) -
     \varepsilon^2 \frac{h^3}{3} \partial_{X}^3 u ~, \nonumber \\
   \partial_t u &=& -g \partial_{X} \eta -\varepsilon^2 u\partial_{X} u ~.
\end{eqnarray}
While this form of Boussinesq system appears most naturally from 
a direct expansion of the Hamiltonian of the problem of water waves, 
the resulting system of partial differential equations is not well
posed, and it is rarely used directly in modeling. In the present
setting, the situation is further aggravated by the fact that a 
coefficient in the above system is singular, as it involves the 
second derivative of a Brownian motion. Several routes to resolving 
these issues are possible, modifying the linear dispersion relation
for the Boussinesq system, and regularizing the coefficients as in
section~\ref{Subsection:RandomCharacteristicCoordinates}, for
example. However we will not pursue this direction of inquiry 
in the present paper, preferring to make a more systematic study 
of the KdV scaling regime.


\section{The KdV regime}\label{kdvregime}
\setcounter{equation}{0}
In the case of the Boussinesq derivation, the limit of certain integrals
in the water waves Hamiltonian will give rise to singular coefficients
in the resulting equations of motion.This is even more true in the case of 
the KdV regime; indeed the transformation to characteristic coordinates
will give rise to a modified symplectic structure which 
involves a second derivative of Brownian motion, something that is not
acceptable on an analytic level. To get around this difficulty, 
we regularize the linear wavespeed as described in 
section~\ref{Subsection:RandomCharacteristicCoordinates}, 
a process which consists of retaining certain terms with rapidly 
varying coefficients in the Hamiltonian, and 
only taking the limit after the long wave equations are derived.
 We assume that
$\sigma_\beta > 0$, which implies that the resulting realization
dependent fluctuations  are maximally significant in the limit, 
and we will perform the smoothing procedure in a way which is 
consistent with this assumption. 

%
%
\subsection{Successive changes of variables}

We start again from the expression (\ref{Eq:scaledhamil}) for the Hamiltonian.
As in the derivation of the Boussinesq system, we first 
change the variables $(\eta,\xi)$ to $(\eta, u=\partial_X\xi)$, 
leading to a transformed  Hamiltonian $H_1^{\varepsilon}$ defined by
\begin{equation}\label{Hamil1}
   H_1^{\varepsilon} 
   = \frac{\varepsilon^3}{2} \int \Big(h_\varepsilon(X) u^2 + g \eta^2 
     - \varepsilon^2( \frac{ h^3}{3} (\partial_{X} u)^2 
     - \eta u^2) \Big) dX
\end{equation} 
and a  modified  symplectic structure 
$J_1 = \varepsilon^{-3} \begin{pmatrix} 0 & -\partial_X
 \\ -\partial_X & 0
     \end{pmatrix}$. 
The next change of variables is defined by the transformation
\begin{equation} \label{variables-kdv}
      \eta = \sqrt[4]{\frac{h_\varepsilon}{4g}} (r + s) ~,
   \quad u = \sqrt[4]{\frac{g}{4 h_\varepsilon}} (r - s) ~.
\end{equation}
The new symplectic structure resulting from this transformation is
\begin{equation}
   J_2 = \varepsilon^{-3}\begin{pmatrix} -\partial_X & \displaystyle{\frac{1}{4} 
     \frac{\partial_X h_\varepsilon }{h_\varepsilon}} \\
   - \displaystyle{\frac{1}{4}\frac{\partial_X h_\varepsilon }
     {h_\varepsilon}} & \partial_X
     \end{pmatrix} ~,
\end{equation}
whose off-diagonal terms quantify the scattering of solutions due to
variations in the topography. 
In this expression,  we retain the regularized expression :
\begin{equation}\label{effective-h-eps}
   h_\varepsilon(X)= h - \varepsilon \beta(\frac{X}{\varepsilon})
 -\varepsilon^2 a_{\beta}.
\end{equation}
The Hamiltonian is written as 
\begin{eqnarray}
  &&
  H_2^{\varepsilon}(r,s) 
   = \frac{\varepsilon^3}{2} \int \Big(\sqrt{g h_\varepsilon(X)} (r^2+s^2 )
   - \frac{\varepsilon^2}{3} h^3 \Big[ \Bigl(\partial_X
     \sqrt[4]{\frac{g}{4h_\varepsilon}} r \Bigr)^2 \nonumber \\
  & & \quad\,\, - 
    2 \Bigl(\partial_X \sqrt[4]{\frac{g}{4h_\varepsilon}} r \Bigr) 
      \Bigl(\partial_X \sqrt[4]{\frac{g}{4h_\varepsilon}} s \Bigr) +
      \Bigl(\partial_X \sqrt[4]{\frac{g}{4h_\varepsilon}} s \Bigr)^2 
      \Big]
        \nonumber \\
    & & \quad\, + \, \frac{\varepsilon^2}{2} 
        \sqrt[4]{\frac{g}{4h_\varepsilon}} (r^3 - r^2 s - r s^2 + s^3) \,\Big)
        dX + \LittleOh{\varepsilon^5} ~.
\end{eqnarray}
Notice that, except for the first term in the Hamiltonian, $h_\varepsilon$ 
appears in terms that are already of order $\varepsilon^5$ and thus can 
be replaced there by the constant $h$ in this asymptotic calculation. 
Denoting by 
$$
   c_1 = \frac{h^3}{3} \sqrt{\frac{g}{4h}}, \, \, \quad
   c_2 = \frac{1}{2}\sqrt[4]{\frac{g}{4h}},
$$
we rewrite $H_2^{\varepsilon}$ in the form:
\begin{eqnarray}
   && H_2^{\varepsilon}
   =  \frac{\varepsilon^3}{2} \int \sqrt{gh_\varepsilon} (r^2+s^2 )
    - c_1 \varepsilon^2  \Big((\partial_{X} r )^2 -2 (\partial_{X} r )
      (\partial_{X} s )+ (\partial_{X} s)^2\Big)\nonumber \\
  & & \quad\,+ c_2\varepsilon^2  \Big(r^3 - r^2 s - r s^2 + s^3\Big)  \, dX
      + \LittleOh{\varepsilon^5} ~.
\end{eqnarray}
Hamilton's equations for $(r,s)$ take the form 
\begin{equation}\label{Eq:r-s}
   \partial_t  \begin{pmatrix} r \\ s 
               \end{pmatrix} 
   = J_2 \, \begin{pmatrix} \delta_r H_2^{\varepsilon} \\ 
                            \delta_s H_2^{\varepsilon} 
               \end{pmatrix} 
\end{equation}
where $\delta_r H_2^{\varepsilon}$ and $\delta_s H_2^{\varepsilon} $ 
are computed as follows:
\begin{equation}
\begin{array}{lcl}
  \delta_r H_2^{\varepsilon}& =&\frac{\varepsilon^3}{2} 
    \Big( \sqrt{g h_\varepsilon}  ~2 r 
  + c_1 \varepsilon^2 (2 \partial_{X}^2 r - 2 \partial_{X}^2 s) 
  + c_2 \varepsilon^2 (3 r^2 -2 r s -s^2) \Big)\\
\noalign{\vskip6pt}
\delta_s H_2^{\varepsilon} &=& \frac{\varepsilon^3}{2} 
    \Big( \sqrt{g h_\varepsilon} ~2 s
  - c_1\varepsilon^2 (2 \partial_{X}^2 r - 2 \partial_{X}^2 s)
  - c_2\varepsilon^2 ( r^2 +2 rs -3 s^2)\Big).
\end{array}
\end{equation}
Hamilton's equations are explicitly
\begin{equation}
\begin{array}{lcl}
\partial_t r &=& -\partial_X\Big[\sqrt{g h_\varepsilon} r + 
\varepsilon^2 \Big( c_1( \partial_X^2 r- \partial_{X}^2s) +\frac{1}{2} c_2
(3r^2 -2rs -s^2) \Big) \Big]  \\
\noalign{\vskip6pt}
&\quad&  +\frac{1}{4}  \frac{ \partial_X h_\varepsilon}{h_\varepsilon} 
\Big[ \sqrt{g h_\varepsilon}s +\varepsilon^2 \Big( c_1( \partial_{X}^2 s
- \partial_{X}^2 r) +\frac{1}{2} c_2 (-r^2-2rs+3s^2) \Big) \Big]
\end{array}
\label{eqforr} 
\end{equation}
\begin{equation}
\begin{array}{lcl}
\partial_t s &=& \partial_X\Big[\sqrt{g h_\varepsilon}s + 
\varepsilon^2 \Big( c_1( \partial_{X}^2 s- \partial_{X}^2 r)+\frac{1}{2} c_2
(-r^2-2rs+3s^2) \Big) \Big]  \\
\noalign{\vskip6pt}
&\quad &-\frac{1}{4}  \frac{ \partial_X h_\varepsilon}{h_\varepsilon} 
\Big[ \sqrt{g h_\varepsilon}r+\varepsilon^2 \Big( c_1( \partial_{X}^2 r
- \partial_{X}^2 s)+\frac{1}{2} c_2 (3r^2-2rs-s^2) \Big) \Big].
\end{array}
\label{eqfors}
\end{equation}
In the action of $J_2\delta H_2^{\varepsilon}$, there 
are products of $h_\varepsilon$ and its derivatives, and each factor
tends to a distribution (see Lemma \ref{Lemma3.2}) in the limit
$\varepsilon \to 0$. The product is nevertheless well 
defined because of the form it takes:
$$
   \int  \partial_X h_\varepsilon ~ {h_\varepsilon}^{-1/2}
   f(X) \, dX = 2 \int \partial_X  h_\varepsilon ^{1/2} f(X) \, dX ~.
$$
We perform an additional change of scale of $s$ relative to $r$ defined by 
\begin{equation}\label{Eq:InhomogeneousScaling}
               \begin{pmatrix} r \\ s_1 
               \end{pmatrix}   = 
      \begin{pmatrix}  1  & 0 \\ 0 & \varepsilon^{-3/2}\end{pmatrix}
      \begin{pmatrix}  r \\ s \end{pmatrix} ~,
\end{equation}
which puts forward $r(X,t)$ as the main component of the solution 
which is anticipated to be traveling principally to the right, with 
a relatively small scattered component $s_1(X,t)$ propagating 
principally to the left. The transformation leads to a modified
symplectic structure 
\begin{equation}
  J_3 = \frac{1}{\varepsilon^3} 
      \begin{pmatrix}  -\partial_X  & 
          \displaystyle{\frac{1}{4\varepsilon^{3/2}}
         \frac{\partial_X h_\varepsilon}{h_\varepsilon}} \\ 
        \displaystyle{ - \frac{1}{4\varepsilon^{3/2}}
         \frac{\partial_X h_\varepsilon}{h_\varepsilon}} &
          \displaystyle{\frac{1}{\varepsilon^{3}}} \partial_X \end{pmatrix}
\end{equation}
and a final Hamiltonian
\begin{equation}
\begin{array}{l}
 H_3^{\varepsilon}(r,s_1) = 
  \frac{\varepsilon^3}{2} \int \sqrt{gh_\varepsilon} (r^2+\varepsilon^3 s_1^2 )
    -c_1 \varepsilon^2  \Big((\partial_{X} r )^2 
 - 2 \varepsilon^{\frac32}(\partial_{X} r )
    (\partial_{X} s_1 )+\varepsilon^3 (\partial_{X} s_1)^2\Big) \\
\noalign{\vskip6pt}
   \qquad \qquad+ c_2\varepsilon^2  \Big(r^3 - \varepsilon^{\frac32} r^2 s_1 - 
     \varepsilon^3 r s_1^2 + \varepsilon^{\frac92}s_1^3\Big)  \, dX
      + \LittleOh{\varepsilon^5}.
\end{array}
\label{5.12}
\end{equation}
The equations stemming from the Hamiltonian (\ref{5.12}) and the 
above symplectic structure are 
\begin{equation}
\begin{array}{l}
 \partial_t r = -\partial_X \Big[ \sqrt{ g h_\varepsilon} r
     + \varepsilon^2 (c_1 \partial_X^2 r +\frac{3}{2} c_2 r^2) \\
\noalign{\vskip6pt}
  \qquad\qquad 
     + \varepsilon^2 (- \varepsilon^{\frac32} c_1 \partial_X^2 s_1 
       - \varepsilon^{\frac32} c_2 r s_1 - 
\frac{1}{2}\varepsilon^3 c_2 s_1^2)  
     \Big]        \\
\noalign{\vskip6pt}
  \quad - \frac{1}{4}\frac{ \partial_x \beta(\frac{X}{\varepsilon})}{h_\varepsilon} 
     \Big[ \varepsilon^{3/2} \sqrt{g h_\varepsilon} s_1 
   + \varepsilon^2 ( c_1 (- \partial_X^2 r 
     + \varepsilon^{3/2} \partial_X^2 s_1) \\
\noalign{\vskip6pt}  
\qquad\qquad   + c_2 ( - \frac{1}{2} r^2 - \varepsilon^{3/2} rs_1 
   + \frac{3}{2} \varepsilon^3 s_1^2 )) \Big]
\end{array}
\label{eqr1}
\end{equation}
\begin{equation}
\begin{array}{l}
 \partial_t s_1 = \partial_X \Big[ \sqrt{ g h_\varepsilon} s_1 
      + \varepsilon^2( c_1(-\varepsilon^{-\frac32}\partial_X^2 r +
 \partial_X^2 s_1)\\
\noalign{\vskip6pt}
  \quad\quad    + c_2( -\frac{1}{2} \varepsilon^{-\frac32} r^2 - rs_1 
      + \frac{3}{2} \varepsilon^{\frac32} s_1^2)) \Big]   \\
\noalign{\vskip6pt}
  \quad + \frac{1}{4}\frac{ \partial_x \beta(\frac{X}{\varepsilon})}{h_\varepsilon} 
      \Big[ \varepsilon^{-\frac32} \sqrt{g h_\varepsilon} r 
    + \varepsilon^{\frac12} ( c_1 (\partial_X^2 r - 
\varepsilon^{\frac32}\partial_X^2 s_1)\\
\noalign{\vskip6pt}
\qquad
   + c_2(\frac{3}{2} r^2 - \varepsilon^{\frac32} rs_1 
    - \frac{1}{2} \varepsilon^3 s_1^2)) \Big].
\end{array}
\label{eqs1}       
\end{equation}
It is ambiguous at this point precisely which terms of the above
system of partial differential equations play a r\^ole in the asymptotic
description of solutions in the limit as $\varepsilon$ tends to zero.
The transformation \eqref{Eq:InhomogeneousScaling} is not homogeneous 
in the perturbation parameter $\varepsilon$, and because of fluctuations 
there are numerous cancellations that occur in the remaining terms, 
not all of them having an influence on the asymptotic regime (see 
Lemmas \ref{Lemma3.2} and \ref{Lemma3.5} for example). 
We will show in the subsequent analysis of Section \ref{Section5.3}
 that the asymptotic 
behavior of solutions of equations (\ref{eqr1})(\ref{eqs1}) as 
$\varepsilon \to 0$ is governed by the following coupled 
system of equations, with an appropriate choice of the parameters
$a_{KdV}$ and $b$.
\begin{eqnarray} \label{eqrnew}
\partial_t r &=& -\partial_X \Big[ c_\varepsilon(X) r
     +\varepsilon^2 (c_1 \partial_X^2 r +\frac{3}{2} c_2 r^2)\Big]
+ \varepsilon^2  b r 
\end{eqnarray}
\begin{eqnarray}\label{eqsnew}
\partial_t s_1 &=&  \sqrt{g h} \partial_X s_1 
   + \frac{1}{4}\sqrt{\frac{g}{h}}\varepsilon^{-3/2} 
\partial_x \beta(\frac{X}{\varepsilon}) r,  
\end{eqnarray}
where the regularized velocity is 
$c_\varepsilon (X) = \sqrt{gh}(1-\frac{\varepsilon}{2h} \beta(X/\varepsilon)-
\varepsilon^2 a_{KdV})$. There are two free parameters in this system of 
equations, namely, $a_{KdV}$ and $b$. They will be determined by the
consistency analysis of Section \ref{Section5.3}
as fixed points of the solution process and the asymptotic analysis.
In the end we find that
\begin{eqnarray}
&&a_{KdV}= \frac{1}{2h}a_\beta +\frac{1}{4h^2} {\sf E}(\beta^2)
 + \frac{3c_1}{8h^2 \sqrt{gh}} {\sf E}( (\partial_x\beta)^2)
\\  
&&b= -\frac{7 c_1}{64h^3}  {\sf E}((\partial_x\beta)^3).
\end{eqnarray}




\subsection{Solution procedure for the random KdV equations}

In this section we describe a reduction procedure for the system of equations 
\eqref{eqrnew}-\eqref{eqsnew} 
that expresses the solution component  $r(X,t)$ in terms of a solution 
$q(Y,\tau)$ of a  deterministic equation similar to the KdV equation,
under a random change of
variables $(Y \mapsto X(t,Y))$ and a scaling $\tau=\varepsilon^2 t$
to the KdV time.   The scattered component 
$s_1(X,t)$ is an expression involving integrations along
characteristics.
The solution depends upon the two parameters $a_{KdV}$ and $b$.
 We retain the regularized 
form of the characteristic velocity $c_\varepsilon(X)$, only taking the 
limit as $\varepsilon \to 0$ in expressions for the solution.
 
Substitute $r= \partial_X R$ into \eqref{eqrnew}; the resulting
equation for $R$ is
\begin{equation}
   \partial_t R = - c_\varepsilon(X)  \partial_X R - \varepsilon^2 
   (c_1 \partial^3_{X}R + \frac{3}{2} c_2 (\partial_X R)^2) +
\varepsilon^2  b R.
\end{equation}
Transform to characteristic coordinates as in Section 
\ref{Subsection:RandomCharacteristicCoordinates},
\begin{equation}\label{theflow}
   \frac{dX}{dt} =  c_\varepsilon(X) ~, \qquad 
   X(0)=Y ~.
\end{equation}
We denote the flow by  $X=\Phi_t^\varepsilon(Y)$, which is a regularized
realization dependent change of variables. 
Define $Q(Y,\tau)=R(X,t)$ so that $Q$ satisfies 
\begin{equation}\label{Eq:ClassicalKdV}
   \partial_\tau Q = - c_1 \partial_Y^3 Q - \frac{3}{2}
 c_2 (\partial_YQ)^2  + b Q ~.
\end{equation}
To solve the initial value problem, set $q(Y,0) = r(Y,0)=r^0(Y)$, and 
solve the deterministic  equation 
\begin{equation}
   \partial_\tau q = - c_1 \partial_Y^3 q - 3
 c_2 q \partial_Y q  + b q \label{classicalKdV}
\end{equation}
for $q(Y,\tau)= \partial_Y Q(Y,\tau)$. 
If $b=0$, equation \eqref{classicalKdV} is the classical
 KdV equation. Additionally, for each realization 
$\beta(x,\omega)$ the regularized ODE \eqref{theflow} defining the flow 
has a solution given by $X = X(t,Y; \varepsilon, \omega)$. With these
two ingredients, the solution $r(X,t)$ of  equation \eqref{eqrnew} is given by
\begin{equation}
        r(X,t) = \partial_X Q(Y(t,X; \varepsilon, \omega),\varepsilon^2 t) 
        = \partial_Y Q(Y(t,X; \varepsilon, \omega),\varepsilon^2 t)
     \partial_X Y(t,X; \varepsilon, \omega) \label{r-sol}
\end{equation}
where $\partial_Y X(t,Y; \varepsilon, \omega)$ is the Jacobian of the flow \eqref{theflow}
as described in section \ref{Subsection:RandomCharacteristicCoordinates}, 
and $\partial_X Y(t,X; \varepsilon, \omega)$ is its inverse. 
This is an expression of the solution of the regularized equation.

The equation \eqref{eqsnew} describes the scattered component of 
 the KdV system above, whose solution is expressed by
integration of a forcing term which is
given in terms of $r(X,t)$ along left-moving characteristics . Explicitly,
\begin{equation}
\begin{array}{lcl}
 s_1(X,t) &=& s_1^0(X + \sqrt{gh}t) \\
\noalign{\vskip6pt}
&&
 + \frac{\varepsilon^{-\frac32}}{4}\sqrt{\frac{g}{h}} 
  \int_0^t \partial_x \beta\big(\frac{X+\sqrt{gh}(t-t')}{\varepsilon}\big)
       r(X + \sqrt{gh}(t-t'),t') \, dt'
\\
& =& s_1^0(X + \sqrt{gh}t) +
 \frac{\varepsilon^{-\frac32}}{4h}  \int_X^{X + \sqrt{gh}t}
\partial_x \beta\big(\frac{\theta}{\varepsilon}\big)
r\big(\theta, t+ \frac{X-\theta}{\sqrt{gh}}\big) d\theta.
\end{array}
\label{s_1-solution}
\end{equation}
The small parameter $\varepsilon$ is still present in the regularization; 
to complete the description we consider the limit of the expressions 
 (\ref{r-sol})(\ref{s_1-solution}) as  $\varepsilon$ tends to zero. The solution of
 \eqref{Eq:ClassicalKdV} is smooth, and 
admits a Taylor expansion in its arguments.  The inverse Jacobian 
has an asymptotic expression as well. Therefore, one writes
\begin{equation}
\begin{array}{lcl}
  r(X,t) &=& \partial_X Q(Y(X,t;\omega), \varepsilon^2 t) =
 \partial_Y Q(Y(X,t;\omega),t)
     \partial_X Y(X,t;\omega)   \\
 \noalign{\vskip6pt}
&=& q(X - \sqrt{gh}t,\varepsilon^2  t)\Big( 1 +
     \frac{\varepsilon}{2h}
     (\beta(\frac{X}{\varepsilon}) - \beta(\frac{X-\sqrt{gh}t}{\varepsilon})
 \Big)  
 \\
&\qquad& + \partial_X q(X - \sqrt{gh}t, \varepsilon^2 t)
 \frac{\varepsilon^2}{2h} 
     \int_{(X-\sqrt{gh}t)/\varepsilon}^{\frac{X}{\varepsilon}} \beta(t') \, dt' 
 +\cdots
\\
 \noalign{\vskip6pt}
 &=& q(X - \sqrt{gh} t, \varepsilon^2 t)  \\
&& + \partial_X \Big( q(X - \sqrt{gh}t, \varepsilon^2 t)
       \Bigl( \frac{\varepsilon^2}{2h} 
     \int_{(X-\sqrt{gh}t)/\varepsilon}^{\frac{X}{\varepsilon}} \beta(t') \, dt' 
    \Bigr)\Big) +  \BigOh{\varepsilon^2}.
\end{array}
\label{r-solution}
\end{equation}

\begin{Prop}
In the limit as $\varepsilon$ tends to zero, the expression 
\eqref{r-solution} for the solution of \eqref{eqrnew}
is asymptotic as a distribution to
\begin{equation}
\begin{array}{l}
   r(X,t) =  q(X - \sqrt{gh} t,\varepsilon^2 t) \\
\noalign{\vskip6pt}
\qquad\qquad
+ \frac{\varepsilon^{3/2}\sigma_\beta}{2h}\sqrt[4]{gh} 
 \partial_X \Big( q(X - \sqrt{gh}t,\varepsilon^2  t)
     B_{\omega(X)}(t) \Big)  +o(\varepsilon^{3/2}).
\end{array}
\label{r-limit}
\end{equation}
The expression for \eqref{s_1-solution} for the solution $s_1$ 
is asymptotic as a distribution to 
\begin{equation}
\begin{array}{lcl}
   &&  s_1(X,t) = s_1^0(X+\sqrt{gh}t)    \\
   \noalign{\vskip6pt}
   &&  + \frac{1}{4 h \sigma_\beta} \displaystyle{\int}_X^{X+\sqrt{gh}t} B(\theta)
         \frac{d^2}{d\theta^2} q(2\theta-X-\sqrt{gh}t,
         \varepsilon^2(t + \frac{X-\theta}{\sqrt{gh}})) d\theta \\
   \noalign{\vskip6pt}
   && + \frac{1}{4 h \sigma_\beta} 
      \Big( \partial_X B(X+\sqrt{gh}t) q(X+\sqrt{gh}t,0)
     -\partial_X B(X) q(X-\sqrt{gh}t,\varepsilon^2 t) \Big)\\
   \noalign{\vskip6pt}
   && -\frac{1}{2 h \sigma_\beta} 
     \Big( B(X+\sqrt{gh}t) \partial_X q(X+\sqrt{gh}t,0)
     -B(X) \partial_X q(X-\sqrt{gh}t,\varepsilon^2 t) \Big).
\end{array}
\label{s1-limit}
\end{equation}
\end{Prop}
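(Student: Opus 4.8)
The plan is to establish the two asymptotic formulas separately, in each case starting from the regularized solution expressions \eqref{r-solution} and \eqref{s_1-solution} and passing to the limit $\varepsilon \to 0$ in the sense of distributions, that is, in law. The two tools that drive the argument are Donsker's invariance principle in the form of Lemma~\ref{Lemma3.3} and the scale-separation estimates of Lemmas~\ref{Lemma3.6} and \ref{Lemma3.8}.

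For the $r$-component the leading term $q(X - \sqrt{gh}t, \varepsilon^2 t)$ of \eqref{r-solution} is already in final form, so only its first correction has to be treated. I would rewrite the random amplitude as an increment of the normalized partial sums \eqref{Eq:Y-beta},
$$\frac{\varepsilon^2}{2h}\int_{(X-\sqrt{gh}t)/\varepsilon}^{X/\varepsilon}\beta(t')\,dt' = \frac{\varepsilon^{3/2}\sigma_\beta}{2h}\bigl(Y_\varepsilon(\beta)(X) - Y_\varepsilon(\beta)(X-\sqrt{gh}t)\bigr).$$
By Lemma~\ref{Lemma3.3} the process $Y_\varepsilon(\beta)$ converges in law to a Brownian motion $B$, so this increment tends to $\tfrac{\varepsilon^{3/2}\sigma_\beta}{2h}(B(X)-B(X-\sqrt{gh}t))$; invoking stationarity of increments and the Brownian scaling $B(X)-B(X-\sqrt{gh}t)=\sqrt[4]{gh}\,B_{\omega(X)}(t)$ recorded in Section~\ref{Subsection:RandomCharacteristicCoordinates}, and substituting into \eqref{r-solution}, produces \eqref{r-limit}. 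All remaining terms of \eqref{r-solution} are $\BigOh{\varepsilon^2}=o(\varepsilon^{3/2})$ and therefore drop out.

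For the scattered component I would work from the second form of \eqref{s_1-solution}, carry $s_1^0(X+\sqrt{gh}t)$ along unchanged, and substitute the leading-order expression for $r$ supplied by \eqref{r-limit} into the forcing integral. Along the left-moving characteristic $t'=t+(X-\theta)/\sqrt{gh}$ the argument of $q$ collapses to $2\theta-X-\sqrt{gh}t$, which is precisely the argument appearing in \eqref{s1-limit}, and its slow-time argument reduces to $\varepsilon^2(t+(X-\theta)/\sqrt{gh})$. The singular prefactor $\varepsilon^{-3/2}$ is absorbed by the identity
$$(\partial_x\beta)(\tfrac{\theta}{\varepsilon}) = \varepsilon^{3/2}\,\partial_\theta^2\Bigl(\sqrt{\varepsilon}\!\int_0^{\theta/\varepsilon}\!\beta(s)\,ds\Bigr),$$
in which the inner antiderivative is a multiple of $Y_\varepsilon(\beta)$ and hence converges in law to a Brownian motion by Lemma~\ref{Lemma3.3}. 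After this cancellation the integrand is $\partial_\theta^2$ of a process converging to Brownian motion, tested against the smooth factor $q$, and two integrations by parts in $\theta$ transfer the derivatives onto $q$. The interior term yields the integral $\int_X^{X+\sqrt{gh}t}B(\theta)\,\partial_\theta^2 q\,d\theta$; the first boundary contribution yields the $\partial_X B\cdot q$ terms evaluated at $\theta=X$ and $\theta=X+\sqrt{gh}t$; and the second boundary contribution yields the $B\cdot\partial_X q$ terms, the chain-rule identity $\partial_\theta q(2\theta-\cdots)=2\,\partial_X q$ accounting for the factor-two difference between the coefficients on the last line of \eqref{s1-limit} and the preceding ones. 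Passing to the limit via Lemma~\ref{Lemma3.3} then assembles \eqref{s1-limit}.

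The hard part is justifying that only the deterministic leading part of $r$ survives inside the $s_1$ forcing integral. Because of the $\varepsilon^{-3/2}$ prefactor one may not simply discard the fluctuating $o(\varepsilon^{3/2})$ corrections to $r$ from \eqref{r-solution}: each such correction carries a second factor of $\beta$, so that the corresponding integrand has the product structure $\beta(\theta/\varepsilon)\,\beta(\cdot/\varepsilon)$ integrated over the characteristic interval $[X,X+\sqrt{gh}t]$, which is exactly the object controlled in Lemma~\ref{Lemma3.8}. The delicate step is to check, term by term, that after the $\varepsilon^{-3/2}$ amplification these cross contributions are still $o(1)$; this relies on the short-scale decorrelation of $\partial_x\beta$ against $\beta$ quantified in Lemmas~\ref{Lemma3.6} and \ref{Lemma3.8}, and not on naive power counting. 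A secondary point requiring care is that the limiting objects $\partial_X B$ in the boundary terms are genuine distributions in $(X,t)$, so every identity above must be read after pairing with a test function $\psi(X,t)$ and every convergence understood in law through the continuity of the relevant functionals on path space.
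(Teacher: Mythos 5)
Your proposal is correct and follows essentially the same route as the paper: the $r$-limit is obtained by writing the random amplitude as an increment of $Y_\varepsilon(\beta)$ and invoking Donsker's principle with Brownian scaling, and the $s_1$-limit by substituting the regularized expression for $r$ into the forcing integral, converting $\partial_x\beta$ into $\theta$-derivatives of the normalized partial sums, and integrating by parts twice (with $\partial_\tau q = \BigOh{\varepsilon^2}$) to produce the interior and boundary terms of \eqref{s1-limit}. The paper disposes of the quadratic-in-$\beta$ cross terms by the same inline integration-by-parts manipulations that underlie Lemmas~\ref{Lemma3.6} and \ref{Lemma3.8}, which you instead cite directly, so the two arguments coincide in substance.
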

\begin{proof}
The expression for the limit of $r_1$ follows directly from the
application of Lemma \ref{Lemma3.5}. It is  
an expression which exhibits both randomness in its amplitude, as well
as in location as per the random characteristic coordinates in which
it is expressed.  For the calculation for the limit of $s_1$, we substitute 
the expression (\ref{r-solution}) in (\ref{s_1-solution})  :
\begin{equation}
\begin{array}{l}
s_1(X,t) = s_1^0(X + \sqrt{gh}t)\\
\noalign{\vskip6pt}
\qquad+\displaystyle{\frac{\varepsilon^{-\frac32}}{4h}  \int}
_X^{X + \sqrt{gh}t}
\partial_x \beta\big(\frac{\theta}{\varepsilon}\big)
q\big(2\theta -X-\sqrt{gh}t,\varepsilon^2
( t+ \frac{X-\theta}{\sqrt{gh}})\big) d\theta\\
\noalign{\vskip6pt}
\qquad+ \displaystyle{\frac{\varepsilon^{1/2}}{8h^2}\int}_X^{X + \sqrt{gh}t}
\partial_x \beta\big(\frac{\theta}{\varepsilon}\big)
\Big[\int_{\frac{2\theta -X-\sqrt{gh}t}{\varepsilon}} ^{
\frac{\theta}{\varepsilon}}\beta(s)ds\\
\noalign{\vskip6pt}
\qquad\qquad\quad  
\times \partial_X q\big(2\theta -X-\sqrt{gh}t,\varepsilon^2(
 t+ \frac{X-\theta}{\sqrt{gh}})\big)
\Big]d\theta\\
\noalign{\vskip6pt}
\qquad + \displaystyle{\frac{\varepsilon^{-\frac12}}{8h^2}\int}_
X^{X + \sqrt{gh}t}
\Big(\partial_x \frac{\beta^2}{2}\big(\frac{\theta}{\varepsilon}\big)
-\partial_x \beta\big(\frac{\theta}{\varepsilon}\big)
\beta\big(\frac{2\theta-X-\sqrt{gh}t}{\varepsilon}\big)\Big)
\\
\noalign{\vskip6pt}
\qquad\qquad
\times q\big(2\theta -X-\sqrt{gh}t,\varepsilon^2( t+ \frac{X-\theta}{\sqrt{gh}})
\big) d\theta.
\end{array}
\label{s1-expression} 
\end{equation}
Except for the first term $s_1^0$ that remains unchanged, all the terms
appearing in the limiting expression \eqref{s1-limit} come from
the first integral in the expression of $s_1$, where we performed 
several integrations by parts and use the fact that $\partial_t 
q (X, \varepsilon^2 t) $ is 
$\BigOh{\varepsilon^2}$.
By more integration by parts, using the fact that 
$\partial_x = \varepsilon \partial_X$ we  can show  that the third term
 (third and fourth lines) in the expression
\eqref{s1-expression} is $\BigOh{\varepsilon^{1/2}}$.
Let us turn to the last term  (fifth and sixth lines) 
of \eqref{s1-expression}. For the term
containing $\partial_x \beta^2(\theta/\varepsilon)$, integration by parts
will produce an additional $\varepsilon$ and the term will eventually 
be of order 
$\BigOh{\varepsilon^{1/2}}$. To estimate the term containing the
product 
$\partial_x \beta\big(\frac{\theta}{\varepsilon}\big)
\beta\big(\frac{2\theta-X-\sqrt{gh}t}{\varepsilon}\big)$, the integration
by parts moves  
the derivative $\partial_x$ to all other terms. The only contribution
that will not produce an $\varepsilon$ is when the derivative
acts on  $\beta\big(\frac{2\theta-X-\sqrt{gh}t}{\varepsilon}\big)$.
For this term, we write
\begin{equation}
\partial_x\beta\big(\frac{2\theta-X-\sqrt{gh}t}{\varepsilon}\big)
= - \frac{\varepsilon}{\sqrt{gh}} 
\partial_t\beta\big(\frac{2\theta-X-\sqrt{gh}t}{\varepsilon}\big).
\end{equation}
After some simple manipulations, this term is again 
$\BigOh{\varepsilon^{1/2}}$.
\end{proof}

\subsection{Consistency of the resulting system of equations}\label{Section5.3}
In this subsection, we complete the cycle of a self-consistency
analysis for equations \eqref{eqrnew} and \eqref{eqsnew}, out
of which the two so-far undetermined constants $a_{KdV}$ and
$b$ are selected. It is clear that not all terms in equations
\eqref{eqr1} and \eqref{eqs1} are of equal importance in the
limit as $\varepsilon \to 0$. Recall the  criterion 
as presented in section \ref{homog}, which states that a term
$a(X,t; \varepsilon, \omega)$ is of order
$\BigOh{\varepsilon^r}$ if for any space-time test function
$\varphi(X,t) \in {\mathcal S}$ the measures ${\sf P}_\varepsilon$
induced by
$\varepsilon^{-r}\int a(X,t;\varepsilon,\omega)\varphi(X,t)\, dXdt$
converge weakly to a limit ${\sf P}_0$ as $\varepsilon$ tends to zero.
In the present case, the  analysis consists of
(i) the derivation of  an expression for the solutions of 
 \eqref{eqrnew}\eqref{eqsnew} which  are stated in \eqref{s_1-solution} and
\eqref{r-solution},  and depend
upon the two parameters $a_{KdV}$ and $b$;
(ii) the examination  of the terms in \eqref{eqr1}, including in particular
those which do not appear in \eqref{eqrnew} (respectively, all the
terms in \eqref{eqs1}, in particular those that do not appear in
\eqref{eqsnew}). Using the expressions 
\eqref{s_1-solution}\eqref{r-solution}
we then show that, except terms which appear in 
\eqref{eqrnew} (respectively \eqref{eqsnew}), they are asymptotically of
order $o(\varepsilon^2)$ (respectively, of order $o(1)$).
 Both the system
\eqref{eqrnew}\eqref{eqsnew} and the solution expressions
\eqref{s_1-solution} \eqref{r-solution} depend upon 
parameters $a_{KdV}$ and $b$. (iii) The demonstration that these
constants can be chosen so that there is a fixed
point  of this analysis. Namely, the solution
depending upon the constants $a_{KdV}$ and $b$ has
asymptotic behavior which satisfies the equations
\eqref{eqrnew}-\eqref{eqsnew} with the same choice of
constants.

\smallskip\noindent
Let us denote the  terms in  (\ref{eqr1})  by
\begin{equation}
\begin{array}{lcl}
{\rm{I}}_r &=& \varepsilon^2\partial_X \Big(
 - \varepsilon^{3/2} c_1 \partial_X^2 s_1 
       - \varepsilon^{3/2} c_2 r s_1 - \frac{1}{2}\varepsilon^3 c_2 s_1^2 
     \Big)\\
\noalign{\vskip6pt}
{\rm{II}}_r &=&
-\frac{1}{4}\frac{\partial_x \beta(\frac{X}{\varepsilon})}{h_\varepsilon} 
      \varepsilon^{3/2} \sqrt{g h_\varepsilon} s_1 \\
\noalign{\vskip6pt}
{\rm{III}}_r&=& -\frac{1}{4}\frac{\partial_x \beta(\frac{X}{\varepsilon})}
{h_\varepsilon} 
    \varepsilon^2  c_1 (- \partial_X^2 r 
     + \varepsilon^{3/2} \partial_X^2 s_1)   \\
\noalign{\vskip6pt}
{\rm{IV}}_r&=&-
\frac{1}{4}\frac{ \partial_x \beta(\frac{X}{\varepsilon})}{h_\varepsilon} 
\varepsilon^2 c_2 ( - \frac{1}{2} r^2 - \varepsilon^{3/2} rs_1 
   + \frac{3}{2} \varepsilon^3 s_1^2 ) .
\end{array}
\end{equation}
Similarly, we denote the terms in  (\ref{eqs1}) 
by

\begin{equation}
\begin{array}{lcl} 
{\rm{I}}_s &=&\varepsilon^2
\partial_X \Big(
 c_1(-\varepsilon^{-3/2}\partial_X^2 r + \partial_X^2 s_1)
      + c_2( -\frac{1}{2} \varepsilon^{-3/2} r^2 - rs_1 
      + \frac{3}{2} \varepsilon^{3/2} s_1^2) \Big)\\
\noalign{\vskip6pt}
{\rm{II}}_s &=&
 \frac{1}{4}\frac{ \partial_x \beta(\frac{X}{\varepsilon})}{h_\varepsilon} 
 \varepsilon^{-3/2} \sqrt{g h_\varepsilon} r \\
\noalign{\vskip6pt}
{\rm{III}}_s &=&
 \frac{1}{4}\frac{ \partial_x \beta(\frac{X}{\varepsilon})}{h_\varepsilon} 
\varepsilon^{1/2} c_1 (\partial_X^2 r - \varepsilon^{3/2}\partial_X^2 s_1)
\\
\noalign{\vskip6pt}
{\rm{IV}}_s &=&
 \frac{1}{4}\frac{ \partial_x \beta(\frac{X}{\varepsilon})}{h_\varepsilon} 
\varepsilon^{1/2}
c_2(\frac{3}{2} r^2 - \varepsilon^{3/2} rs_1 
    - \frac{1}{2} \varepsilon^3 s_1^2).
\end{array}
\end{equation}
The purpose is to evaluate the asymptotic behavior of
each of these terms as $\varepsilon \to 0$.

\begin{Lemma}\label{Lemma5.3}
The term ${\rm{II}}_r$ has the asymptotic behavior
\begin{equation} 
{\rm{II}}_r =  \frac{1}{8h} \sqrt{\frac{g}{h}}\varepsilon^2    
{\sf E}(\beta^2) \partial_X r (X ,t) +o(\varepsilon^2).
\end{equation}
\end{Lemma}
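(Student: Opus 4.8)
The plan is to substitute the explicit solution \eqref{s_1-solution} for $s_1$ into the definition of $\mathrm{II}_r$ and to isolate the leading contribution, which arises from the self-interaction of the random bottom along the characteristic. First I would replace the prefactor $\sqrt{gh_\varepsilon}/h_\varepsilon$ by its limit $\sqrt{g/h}$ via \eqref{effective-h-eps}; the correction carries an extra factor $\varepsilon\beta(X/\varepsilon)$ and, after testing, is $o(\varepsilon^2)$ by Lemma~\ref{Lemma3.2}. The boundary term $s_1^0(X+\sqrt{gh}t)$ in \eqref{s_1-solution} multiplies $\partial_x\beta(X/\varepsilon)$, which is $\BigOh{\varepsilon^{3/2}}$ as a distribution by Lemma~\ref{Lemma3.5} with $r=1$; combined with the explicit $\varepsilon^{3/2}$ this is $\BigOh{\varepsilon^3}$ and negligible. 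The essential contribution comes from the forcing integral in \eqref{s_1-solution}, where the prefactor $\varepsilon^{-3/2}$ cancels the explicit $\varepsilon^{3/2}$, leaving
\[
  \mathrm{II}_r = -\frac{1}{16h}\sqrt{\tfrac{g}{h}}\,\partial_x\beta(\tfrac{X}{\varepsilon}) \int_X^{X+\sqrt{gh}t} \partial_x\beta(\tfrac{\theta}{\varepsilon})\, r\big(\theta,\,t+\tfrac{X-\theta}{\sqrt{gh}}\big)\, d\theta + o(\varepsilon^2).
\]

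Next I would extract the leading behaviour from the diagonal self-correlation of the two factors $\partial_x\beta$. Writing $\beta'=\partial_x\beta$ and denoting by $\rho_{\beta'}$ its covariance, stationarity gives ${\sf E}\big(\beta'(X/\varepsilon)\beta'(\theta/\varepsilon)\big)=\rho_{\beta'}((\theta-X)/\varepsilon)$; after the substitution $u=(\theta-X)/\varepsilon$ the mean of the integral becomes $\varepsilon\int_0^{\sqrt{gh}t/\varepsilon}\rho_{\beta'}(u)\,F(X+\varepsilon u)\,du$, where $F(\theta)=r(\theta,t+\tfrac{X-\theta}{\sqrt{gh}})$ equals $q(2\theta-X-\sqrt{gh}t,\cdot)$ to leading order by \eqref{r-limit}. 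The crucial point is that freezing $F$ at $F(X)=r(X,t)$ produces the factor $\int_0^\infty\rho_{\beta'}(u)\,du=\tfrac12\sigma_{\partial_x\beta}^2$, which \emph{vanishes} because $\partial_x\beta$ is the derivative of a stationary process (first Lemma of Section~\ref{Subsection:StationaryProcesses}). One is thus forced to the next order, expanding $F(X+\varepsilon u)=r(X,t)+\varepsilon u\,F'(X)+\BigOh{\varepsilon^2}$ with $F'(X)=2\partial_X r(X,t)$, the factor $2$ coming from $\partial_\theta q(2\theta-X-\sqrt{gh}t,\cdot)|_{\theta=X}$ (equivalently $\partial_X r-\tfrac{1}{\sqrt{gh}}\partial_t r$ with $\partial_t r=-\sqrt{gh}\,\partial_X r+\BigOh{\varepsilon^2}$ from the transport part of \eqref{eqrnew}).

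This yields $\mathrm{II}_r = -\tfrac{1}{8h}\sqrt{g/h}\,\varepsilon^2\,\partial_X r\int_0^\infty u\,\rho_{\beta'}(u)\,du + o(\varepsilon^2)$, and it remains to evaluate the covariance moment. Using the standard relation $\rho_{\beta'}(u)=-\rho_\beta''(u)$ for the derivative of a stationary process and integrating by parts,
\[
  \int_0^\infty u\,\rho_{\beta'}(u)\,du = -\int_0^\infty u\,\rho_\beta''(u)\,du = \big[-u\rho_\beta'(u)+\rho_\beta(u)\big]_0^\infty = -\rho_\beta(0) = -{\sf E}(\beta^2),
\]
where the boundary contributions at infinity vanish by the decay guaranteed by the mixing condition \eqref{Eq:MixingRate}. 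Substituting gives exactly $\tfrac{1}{8h}\sqrt{g/h}\,\varepsilon^2\,{\sf E}(\beta^2)\,\partial_X r$, the claimed leading term.

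The main obstacle is to show that $\mathrm{II}_r$ is captured by this mean up to $o(\varepsilon^2)$, that is, that the mean-zero fluctuation of the product $\partial_x\beta(X/\varepsilon)\partial_x\beta(\theta/\varepsilon)$ contributes only at higher order (so that convergence in law to the deterministic limit above holds). For this I would move both derivatives off $\beta$ by integration by parts, using $\partial_x\beta(\cdot/\varepsilon)=\varepsilon\,\partial_{(\cdot)}\beta(\cdot/\varepsilon)$; this reduces the fluctuation to characteristic integrals of products $\beta(X/\varepsilon)\beta(\theta/\varepsilon)$ against smooth kernels, which are $\BigOh{\varepsilon}$ by Lemma~\ref{Lemma3.8} and require only the standing $C^1$ regularity. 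Combined with the explicit powers of $\varepsilon$ gained in the integrations by parts, the fluctuation is $o(\varepsilon^2)$. Finally, replacing $r$ by its deterministic leading order $q$ from \eqref{r-limit} costs only $\BigOh{\varepsilon^{3/2}}$ against a factor already of positive order in $\varepsilon$, hence $o(\varepsilon^2)$, which closes the estimate.
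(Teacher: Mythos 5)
Your proposal is correct and lands on the right constant, but it reaches the leading term by a genuinely different route from the paper. The paper works at the level of realizations: it writes ${\rm II}_r=\tfrac{\sqrt g}{2}\varepsilon^{3/2}\partial_X(\sqrt{h_\varepsilon}-{\sf E}(\sqrt{h_\varepsilon}))\,s_1$, tests against $\varphi$, integrates by parts in $X$ and then in $\theta$ along the characteristic, and identifies ${\sf E}(\beta^2)$ as arising from the coincident-point boundary term $\beta^2(X/\varepsilon)\,r(X,t)$ at $\theta=X$; the residual double integrals are killed by Lemmas~\ref{Lemma3.6} and \ref{Lemma3.8}. You instead compute the expectation of the double $\partial_x\beta$ correlation directly through the covariance function, and your key identity $\int_0^\infty u\,\rho_{\beta'}(u)\,du=-\rho_\beta(0)=-{\sf E}(\beta^2)$ is the exact dual, at the level of covariances, of the paper's realization-level integration by parts. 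Your version has the expository advantage of explaining \emph{why} the answer is $\rho_\beta(0)\,\partial_X r$ rather than $\sigma_{\partial_x\beta}^2\,r$: the first Lemma of Section~\ref{Subsection:StationaryProcesses} forces $\int_0^\infty\rho_{\beta'}=0$, so one is pushed to the next moment, which produces the derivative $\partial_X r$ and the factor $2$ from the characteristic direction. The factor $F'(X)=2\partial_X r$ and the final constant $\tfrac{1}{8h}\sqrt{g/h}$ both check out.

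Two steps need tightening before this is airtight. First, the Taylor expansion $F(X+\varepsilon u)=F(X)+\varepsilon u F'(X)+\BigOh{\varepsilon^2u^2}$ is used over the range $0\le u\le \sqrt{gh}\,t/\varepsilon$, where $\varepsilon u=\BigOh 1$ and where $\int_0^\infty u\,\rho_{\beta'}(u)\,du$ need not converge absolutely under the stated mixing rate $\alpha(y)=\BigOh{1/(y\log y)}$. The cure is the very integration by parts you perform on the moment: write $\rho_{\beta'}=-\rho_\beta''$ and integrate twice against $F(X+\varepsilon u)$ \emph{before} expanding; the $u=0$ boundary terms yield $\rho_\beta'(0)F(X)=0$ (evenness of $\rho_\beta$) and $-\varepsilon\rho_\beta(0)F'(X)$, while the bulk remainder is $\BigOh{\varepsilon^2}$ relative to the prefactor because $\rho_\beta\in L^1$. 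Second, the replacement of $r$ by $q$ inside the product of the two $\partial_x\beta$ factors is not a simple $\BigOh{\varepsilon^{3/2}}$ distributional cost: the correction $\tfrac{\varepsilon}{2h}q\,(\beta(X/\varepsilon)-\beta((X-\sqrt{gh}t)/\varepsilon))$ generates triple products of $\beta$ and its derivatives, which must be integrated by parts and dispatched with Lemmas~\ref{Lemma3.5}, \ref{Lemma3.6} and \ref{Lemma3.8} exactly as the paper does for the terms $A_2$ and $(i)$--$(iii)$ in its treatment of $A_3$. You invoke the right lemmas, but this bookkeeping is where most of the actual work of the paper's proof lies.
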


\begin{Lemma}\label{Lemma5.4}
The term ${\rm{II}}_s$ has the behavior 
\begin{equation}
  {\rm{II}}_s =\frac{\varepsilon^{-3/2}}{4}\sqrt{\frac{g}{h}}
 \partial_x \beta(\frac{X}{\varepsilon}) r + o(1),  
\end{equation}
and this expression has an asymptotic limit as $\varepsilon \to 0$ which is
\begin{equation}
\frac{1}{4}\sqrt{\frac{g}{h}} \sigma_\beta \partial_X^2 B(X,\omega) 
q(X-\sqrt{gh}t,\tau).
\label{asympIIs}
\end{equation}
\end{Lemma}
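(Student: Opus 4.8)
The plan is to treat the two assertions in turn, using the scale-separation lemmas of Section~\ref{Section:ScaleSeparation}. For the first assertion I would start from the definition ${\rm II}_s = \frac14\,\varepsilon^{-3/2}\,h_\varepsilon^{-1}\sqrt{g h_\varepsilon}\,\partial_x\beta(\tfrac{X}{\varepsilon})\,r$ and expand the scalar factor $\sqrt{g h_\varepsilon}/h_\varepsilon = \sqrt{g/h_\varepsilon}$. Inserting the regularized depth $h_\varepsilon = h - \varepsilon\beta(\tfrac{X}{\varepsilon}) - \varepsilon^2 a_\beta$ from \eqref{effective-h-eps} and Taylor expanding gives $\sqrt{g/h_\varepsilon} = \sqrt{g/h}\bigl(1 + \tfrac{\varepsilon}{2h}\beta(\tfrac{X}{\varepsilon}) + \BigOh{\varepsilon^2}\bigr)$. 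The leading term reproduces the stated expression $\tfrac{\varepsilon^{-3/2}}{4}\sqrt{g/h}\,\partial_x\beta(\tfrac{X}{\varepsilon})\,r$, and the first correction contributes $\tfrac{\varepsilon^{-1/2}}{8h}\sqrt{g/h}\,\beta\,\partial_x\beta(\tfrac{X}{\varepsilon})\,r$. Since $\beta\,\partial_x\beta = \tfrac12\partial_x(\beta^2)$ and $\partial_x(\beta^2)(\tfrac{X}{\varepsilon}) = \varepsilon\,\partial_X[\beta^2(\tfrac{X}{\varepsilon})]$, one integration by parts in $X$ together with Lemma~\ref{Lemma3.2} (which replaces $\beta^2(\tfrac{X}{\varepsilon})$ by ${\sf E}(\beta^2)$ up to $o(1)$) shows this term is $\BigOh{\varepsilon^{1/2}} = o(1)$ as a distribution, establishing the first assertion.

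For the second assertion I would insert the solution formula \eqref{r-solution}, writing $r(X,t) = q(X-\sqrt{gh}t,\varepsilon^2 t) + r_{\rm corr}$ with leading correction $r_{\rm corr} = \tfrac{\varepsilon}{2h}\,q\,\bigl(\beta(\tfrac{X}{\varepsilon}) - \beta(\tfrac{X-\sqrt{gh}t}{\varepsilon})\bigr)$ plus smaller integral terms. For the leading piece I would treat $\psi(X,t) := q(X-\sqrt{gh}t,\varepsilon^2 t)\,\varphi(X,t)$ as a Schwartz test function (legitimate since $\partial_t q = \BigOh{\varepsilon^2}$, so $q$ is slowly varying and acts as a smooth multiplier) and apply Lemma~\ref{Lemma3.5} with derivative order one and $\gamma=\beta$, namely $\int \partial_x\beta(\tfrac{X}{\varepsilon})\,\psi\,dX = \varepsilon^{3/2}\sigma_\beta\int \partial_X^2 B(X)\,\psi\,dX + o(\varepsilon^{3/2})$. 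After multiplication by $\tfrac{\varepsilon^{-3/2}}{4}\sqrt{g/h}$ this yields precisely the claimed limit \eqref{asympIIs}, with the two spatial derivatives falling on $B$.

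The main obstacle is the contribution of $r_{\rm corr}$, where I expect the genuine delicacy to lie: because both the rapidly oscillating factor $\partial_x\beta(\tfrac{X}{\varepsilon})$ and the Brownian correction in $r$ are built from the same field $\beta$, one cannot factorize the product of their limits and must return to the finite-$\varepsilon$ expression. Multiplying $\tfrac{\varepsilon^{-3/2}}{4}\sqrt{g/h}\,\partial_x\beta(\tfrac{X}{\varepsilon})$ by $r_{\rm corr}$ produces, apart from prefactors, a diagonal term $\partial_x\beta\,\beta$ at the coincident point $X/\varepsilon$, equal to $\tfrac{\varepsilon}{2}\partial_X[\beta^2(\tfrac{X}{\varepsilon})]$ and hence $o(1)$ by Lemma~\ref{Lemma3.2} after integration by parts, and an off-diagonal term $\partial_x\beta(\tfrac{X}{\varepsilon})\,\beta(\tfrac{X-\sqrt{gh}t}{\varepsilon})$, a product of two correlated stationary processes at the shifted arguments $X/\varepsilon$ and $(X-\sqrt{gh}t)/\varepsilon$. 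This last is exactly the setting of Lemma~\ref{Lemma3.6} with $(r_1,r_2)=(1,0)$ and shift $c=-\sqrt{gh}$; that lemma gives convergence at order $\varepsilon^{r_1+r_2+1}=\varepsilon^2$, so after the accompanying $\varepsilon^{-3/2}$ and $\varepsilon$ prefactors the whole contribution is $\BigOh{\varepsilon^{3/2}}=o(1)$. The remaining integral corrections in \eqref{r-solution} are of still higher order. Hence only the leading term survives, which completes the argument.
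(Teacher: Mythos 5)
Your proposal is correct and follows essentially the same route as the paper: expand $\sqrt{g/h_\varepsilon}$ to isolate the leading term and kill the $\beta\,\partial_x\beta=\tfrac12\partial_x(\beta^2)$ correction by integration by parts, then substitute \eqref{r-solution}, apply Lemma~\ref{Lemma3.5} to $\partial_x\beta(\tfrac{X}{\varepsilon})\,q$ for the limit \eqref{asympIIs}, and dispose of the diagonal and shifted cross terms via Lemmas~\ref{Lemma3.5} and~\ref{Lemma3.6} exactly as the paper does. The only cosmetic differences are that you invoke Lemma~\ref{Lemma3.2} where the paper cites Lemma~\ref{Lemma3.5} for the $\partial_x(\beta^2)$ term (both suffice for $o(1)$), and you assert rather than verify that the $\partial_X q\cdot\tfrac{\varepsilon^2}{2h}\int\beta$ correction is negligible, which the paper confirms by the same integration-by-parts count.
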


\begin{Lemma}\label{Lemma5.5}
\begin{equation}\label{IIIr}
{\rm{III}}_r = \frac{3c_1}{8h^2} \varepsilon^2 {\sf E}((\partial_x\beta)^2)
\partial_X r - \frac{7c_1}{64h^3} \varepsilon^2 {\sf E}((\partial_x\beta)^3) r
+o(\varepsilon^2),
\end{equation}
\begin{equation}
{\rm{III}}_s =  \varepsilon^{-3/2}  {\rm{III}}_r =\BigOh{\varepsilon^{1/2}}.
\end{equation}
\end{Lemma}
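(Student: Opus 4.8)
The plan is to treat the two claims separately, noting first that they are linked: by inspection of the definitions, $\mathrm{III}_s=\tfrac14 \tfrac{\partial_x\beta(X/\varepsilon)}{h_\varepsilon}\varepsilon^{1/2}c_1(\partial_X^2 r-\varepsilon^{3/2}\partial_X^2 s_1)=\varepsilon^{-3/2}\mathrm{III}_r$, so once $\mathrm{III}_r=\BigOh{\varepsilon^2}$ is established the estimate $\mathrm{III}_s=\BigOh{\varepsilon^{1/2}}$ is immediate. Thus all the work is in $\mathrm{III}_r=\tfrac{c_1\varepsilon^2}{4}\tfrac{\partial_x\beta(X/\varepsilon)}{h_\varepsilon}\partial_X^2 r-\tfrac{c_1\varepsilon^{7/2}}{4}\tfrac{\partial_x\beta(X/\varepsilon)}{h_\varepsilon}\partial_X^2 s_1$. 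The second piece is discarded as $o(\varepsilon^2)$ (the $\varepsilon^{7/2}$ prefactor defeats the worst oscillatory amplification of $\partial_X^2 s_1$), leaving the first piece to analyze. Into it I would substitute the solution expansion \eqref{r-solution}, the Neumann series $h_\varepsilon^{-1}=h^{-1}(1+\tfrac{\varepsilon}{h}\beta(X/\varepsilon)+\cdots)$ from \eqref{effective-h-eps}, and the inverse-Jacobian expansion coming from \eqref{Eq:JacobianAsymptotics}, then classify every resulting monomial by its distributional order.

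The governing principle for the classification is that a lone rapidly oscillating factor $\partial_x\beta(X/\varepsilon)$ tested against a smooth function is only $\BigOh{\varepsilon^{3/2}}$ (Lemma~\ref{Lemma3.5} with $r=1$, since $\sigma_{\partial_x\beta}=0$), so the surviving $\BigOh{\varepsilon^2}$ contributions must arise from \emph{same-point} products of oscillatory factors evaluated at $X/\varepsilon$, whose nonzero ensemble means are extracted by Lemma~\ref{Lemma3.2}. Products mixing a factor at $X/\varepsilon$ with one at $(X-\sqrt{gh}t)/\varepsilon$ decorrelate under the mixing hypothesis (these points are separated by $\sqrt{gh}\,t/\varepsilon\to\infty$) and vanish in the limit, as in Lemma~\ref{Lemma3.6}. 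The delicate bookkeeping point I would flag at the outset is that $\partial_X^2$ amplifies each oscillatory factor by $\varepsilon^{-2}$; consequently \emph{every} oscillatory term of \eqref{r-solution} through order $\varepsilon^2$ feeds into the $\BigOh{\varepsilon^2}$ part of $\mathrm{III}_r$, whereas the purely slow factors $q(X-\sqrt{gh}t,\varepsilon^2t)$ may be frozen at leading order. This forces one to carry \eqref{r-solution} to its full oscillatory order $\varepsilon^2$, rather than truncating at the displayed terms.

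The $\mathsf{E}((\partial_x\beta)^2)$ coefficient is the clean part. Differentiating the inverse-Jacobian correction $\tfrac{\varepsilon}{2h}q(\beta_1-\beta_0)$ once and the integral correction $\tfrac{\varepsilon^2}{2h}\partial_X q\!\int\!\beta$ twice produces two copies of the slow-times-oscillatory term $\tfrac{1}{h}\partial_X q\,(\partial_x\beta)(X/\varepsilon)$ and $\tfrac{1}{2h}\partial_X q\,(\partial_x\beta)(X/\varepsilon)$ in $\partial_X^2 r$; multiplying their sum $\tfrac{3}{2h}\partial_X q\,(\partial_x\beta)(X/\varepsilon)$ by $\tfrac{c_1\varepsilon^2}{4h}(\partial_x\beta)(X/\varepsilon)$ gives $\tfrac{3c_1\varepsilon^2}{8h^2}\big((\partial_x\beta)(X/\varepsilon)\big)^2\partial_X q$, and Lemma~\ref{Lemma3.2} replaces the square by $\mathsf{E}((\partial_x\beta)^2)$, yielding exactly $\tfrac{3c_1}{8h^2}\varepsilon^2\mathsf{E}((\partial_x\beta)^2)\partial_X r$ to leading order. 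The $\mathsf{E}((\partial_x\beta)^3)$ coefficient is the genuinely laborious part: the cubic same-point products originate from the \emph{quadratic} inverse-Jacobian term $\tfrac{\varepsilon^2}{4h^2}q(\beta_1-\beta_0)^2$, whose second derivative supplies $\tfrac{q}{2h^2}\big(((\partial_x\beta)(X/\varepsilon))^2+\beta(X/\varepsilon)(\partial_x^2\beta)(X/\varepsilon)\big)$, together with the $\tfrac{\varepsilon}{h}\beta(X/\varepsilon)$ correction from $h_\varepsilon^{-1}$ acting on the $(\partial_x^2\beta)(X/\varepsilon)$ term of $\partial_X^2 r$. The mixed moments are reduced to the pure third moment through the identity $0=\mathsf{E}\big(\partial_x(\beta(\partial_x\beta)^2)\big)=\mathsf{E}((\partial_x\beta)^3)+2\mathsf{E}(\beta(\partial_x\beta)(\partial_x^2\beta))$, i.e. $\mathsf{E}(\beta(\partial_x\beta)(\partial_x^2\beta))=-\tfrac12\mathsf{E}((\partial_x\beta)^3)$, while even-moment terms such as $(\partial_x\beta)(\partial_x^2\beta)=\tfrac12\partial_x((\partial_x\beta)^2)$ integrate to $o(\varepsilon^2)$ after one integration by parts.

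The main obstacle, as the preceding paragraph signals, is the exhaustive and sign-careful accounting needed to land precisely on $-\tfrac{7c_1}{64h^3}$: one must verify that no oscillatory $\varepsilon^2$ term of the solution has been dropped from \eqref{r-solution}, assemble all triple same-point products (from the quadratic Jacobian, from the $h_\varepsilon^{-1}$ correction, and from any cross terms between the Jacobian and Taylor expansions), reduce each mixed moment via the stationarity identities above, and confirm that the remaining decorrelating and even-moment contributions are genuinely $o(\varepsilon^2)$. I would organize this as a single table of monomials indexed by their source, compute each surviving mean by Lemmas~\ref{Lemma3.2}, \ref{Lemma3.5} and \ref{Lemma3.6}, and sum; the fractional coefficient $-\tfrac{7}{64}$ is the hallmark that the cancellations among these competing cubic contributions have been tracked correctly. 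With $\mathrm{III}_r=\BigOh{\varepsilon^2}$ in hand, the relation $\mathrm{III}_s=\varepsilon^{-3/2}\mathrm{III}_r$ closes the proof.
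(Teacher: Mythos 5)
Your reduction of ${\rm III}_s=\varepsilon^{-3/2}{\rm III}_r$ and your treatment of the first piece $C=\tfrac{c_1\varepsilon^2}{4}\tfrac{\partial_x\beta(X/\varepsilon)}{h_\varepsilon}\partial_X^2 r$ track the paper closely: the $\tfrac{3c_1}{8h^2}\varepsilon^2{\sf E}((\partial_x\beta)^2)\partial_X r$ term and the use of ${\sf E}(\beta\,\partial_x\beta\,\partial_x^2\beta)=-\tfrac12{\sf E}((\partial_x\beta)^3)$ are exactly the paper's $C_1$ and $C_2$. The genuine gap is that you discard the second piece $D=-\tfrac{c_1}{4}\varepsilon^{7/2}\tfrac{\partial_x\beta(X/\varepsilon)}{h_\varepsilon}\partial_X^2 s_1$ as $o(\varepsilon^2)$, and it is not negligible. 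The solution formula \eqref{s_1-solution} carries an explicit prefactor $\varepsilon^{-3/2}$ and an integrand $\partial_x\beta(\theta/\varepsilon)$; two $X$-derivatives hit the boundary terms of the $\theta$-integral and produce, among other things, $-\tfrac{1}{4h}\varepsilon^{-5/2}\partial_x^2\beta(X/\varepsilon)\,r(X,t)$, which sits at the \emph{same} fast point as the prefactor $\partial_x\beta(X/\varepsilon)$. The resulting same-point product is $\tfrac{c_1\varepsilon}{32h}\,h_\varepsilon^{-1}\,\partial_x\bigl((\partial_x\beta)^2\bigr)\,r$: its mean vanishes at order $\varepsilon$, but after centering, one integration by parts, the expansion $h_\varepsilon^{-1}=h^{-1}(1+\tfrac{\varepsilon}{h}\beta)+\cdots$, and the substitution $\partial_X r=\partial_X q+\tfrac{q}{2h}(\partial_x\beta(X/\varepsilon)-\cdots)+\BigOh{\varepsilon}$, it leaves the nonzero order-$\varepsilon^2$ contribution $-\tfrac{3c_1}{64h^3}\varepsilon^2{\sf E}((\partial_x\beta)^3)\,r$. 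That is three sevenths of the cubic coefficient; without it your $C$-only computation yields at best $-\tfrac{c_1}{16h^3}=-\tfrac{4c_1}{64h^3}$ and cannot reach $-\tfrac{7c_1}{64h^3}$.

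A secondary point: to close the books on $C$ you propose to carry \eqref{r-solution} to include the quadratic inverse-Jacobian correction $\tfrac{\varepsilon^2}{4h^2}q(\beta_1-\beta_0)^2$. The paper does not do this; it computes $\partial_X^2 r$ from the displayed terms of \eqref{r-solution} only, obtaining the oscillatory part $\tfrac{1}{2h}\bigl(3\partial_X q\,\partial_x\beta+\varepsilon^{-1}q\,\partial_x^2\beta\bigr)$. If you do include that extra term, its second derivative generates same-point triple products whose mean is $\tfrac12{\sf E}((\partial_x\beta)^3)$, i.e.\ a contribution of the opposite sign to your $C_2$, so by your own accounting you would land near zero rather than at $-\tfrac{7}{64}$. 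The correct repair is not to enlarge the expansion of $r$ but to restore $D$: expand $\partial_X^2 s_1$ from \eqref{s_1-solution}, discard the $s_1^0$ term and the two-point (decorrelated) products via Lemma \ref{Lemma3.6}, and extract the surviving same-point mean as above.
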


\begin{Lemma}\label{Lemma5.2}
The remaining terms have the following asymptotic behavior 
\begin{equation}
{\rm{I}}_r = o(\varepsilon^2 ) ~,  \qquad   {\rm{IV}}_r =o(\varepsilon^2)
\end{equation}
and 
\begin{equation}
{\rm{I}}_s = o(1) ~, \qquad {\rm{IV}}_s =o(1).
\end{equation}
\end{Lemma}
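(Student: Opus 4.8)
The plan is to split the four quantities into two groups according to their structure and to test each against an arbitrary Schwartz function $\varphi(X,t)$, using throughout the solution formulas \eqref{r-solution} and \eqref{s_1-solution} together with the fact (from the Proposition, i.e. \eqref{r-limit}--\eqref{s1-limit}) that $r$ and $s_1$ are $\BigOh{1}$ as distributions. The terms ${\rm I}_r$ and ${\rm I}_s$ are exact $X$-derivatives carrying explicit powers of $\varepsilon$, and I would dispatch them by pure power counting. The terms ${\rm IV}_r$ and ${\rm IV}_s$ carry the rapidly oscillating factor $\partial_x\beta(X/\varepsilon)$ multiplying nonlinear expressions in $r,s_1$, and for these the scale separation results of Section~\ref{Section:ScaleSeparation} are the essential tool. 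At the outset I would also record that $h_\varepsilon^{-1}$ and $\sqrt{gh_\varepsilon}$, with $h_\varepsilon$ as in \eqref{effective-h-eps}, expand around the constants $h^{-1}$ and $\sqrt{gh}$ with corrections of order $\varepsilon\beta(X/\varepsilon)$, so these factors may be replaced by their leading constants up to terms that bring an extra power of $\varepsilon$ (and a second $\beta$, handled as in the ${\rm IV}$ analysis).

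For ${\rm I}_r$ and ${\rm I}_s$ I would integrate the outer derivative $\partial_X(\cdots)$ by parts onto $\varphi$, leaving the bracketed monomials tested against the Schwartz function $\partial_X\varphi$. Every monomial inside ${\rm I}_r$ carries a factor $\varepsilon^{3/2}$ or higher, so after the prefactor $\varepsilon^2$ the worst contribution is $\BigOh{\varepsilon^{7/2}}=o(\varepsilon^2)$; here one uses only that $\partial_X^2 s_1$, $r s_1$ and $s_1^2$ are $\BigOh{1}$ once their derivatives are transferred to $\varphi$. For ${\rm I}_s$ the only delicate feature is the factor $\varepsilon^{-3/2}$ accompanying $\partial_X^2 r$ and $r^2$, but it is always multiplied by $\varepsilon^2$, so these borderline terms are $-c_1\varepsilon^{1/2}\partial_X^3 r$ and $-\tfrac12 c_2\varepsilon^{1/2}\partial_X(r^2)$, each $\BigOh{\varepsilon^{1/2}}=o(1)$, while the remaining monomials are $\BigOh{\varepsilon^2}$ or smaller. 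This settles ${\rm I}_r=o(\varepsilon^2)$ and ${\rm I}_s=o(1)$.

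For ${\rm IV}_r$ and ${\rm IV}_s$ the mechanism is the distributional smallness of the oscillatory factor. I would use the identity
\[
   (\partial_x\beta)(X/\varepsilon) = \varepsilon\,\partial_X\bigl[\beta(X/\varepsilon)\bigr],
\]
so that, by Lemma~\ref{Lemma3.4} (equivalently Lemma~\ref{Lemma3.5} with $r=1$), $(\partial_x\beta)(X/\varepsilon)$ is of order $\varepsilon^{3/2}$, behaving as $\varepsilon^{3/2}\sigma_\beta\,\partial_X^2 B$ when paired against a smooth function. Substituting \eqref{r-solution}, the leading pieces are $\tfrac{c_2\varepsilon^2}{8h_\varepsilon}(\partial_x\beta)(X/\varepsilon)\,r^2$ in ${\rm IV}_r$ and $\tfrac{3c_2\varepsilon^{1/2}}{8h_\varepsilon}(\partial_x\beta)(X/\varepsilon)\,r^2$ in ${\rm IV}_s$, where to leading order $r^2$ is the deterministic smooth function $q^2(X-\sqrt{gh}t,\varepsilon^2 t)$. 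Pairing $\partial_x\beta$ against $q^2\varphi$ via Lemma~\ref{Lemma3.5} produces the gain $\varepsilon^{3/2}$, giving $\BigOh{\varepsilon^{7/2}}=o(\varepsilon^2)$ for ${\rm IV}_r$ and $\BigOh{\varepsilon^{2}}=o(1)$ for ${\rm IV}_s$. The remaining monomials $-\varepsilon^{3/2}rs_1$ and $\varepsilon^3 s_1^2$ carry enough surplus powers of $\varepsilon$ from the inhomogeneous scaling \eqref{Eq:InhomogeneousScaling} to be manifestly higher order even with $\partial_x\beta$ treated as pointwise bounded.

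\emph{The main obstacle} I anticipate is precisely these two leading pieces: a naive pointwise bound on $(\partial_x\beta)(X/\varepsilon)$ would only give $\BigOh{\varepsilon^2}$ and $\BigOh{\varepsilon^{1/2}}$, which are exactly borderline, so the strict $o$-estimates genuinely require the distributional cancellation of the oscillatory factor. One must then check that the random correction to $r^2$ — the $\varepsilon^{3/2}$ Brownian term in \eqref{r-limit} — and the scattered contributions through $s_1$ do not resonate with $\partial_x\beta(X/\varepsilon)$ to reconstitute a lower-order term. These cross contributions are products of two scaled mixing processes (one of them evaluated along a shifted argument), and controlling them is exactly the role of Lemma~\ref{Lemma3.6} and, for the characteristic integrals arising from $s_1$ in \eqref{s_1-solution}, of Lemma~\ref{Lemma3.8}: both show such products are at least $\BigOh{\varepsilon}$, which after the prefactors is comfortably negligible. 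Verifying that no diagonal self-correlation survives at lower order is the delicate step, and it is where the characteristic-integral estimate \eqref{test_integral2} is indispensable.
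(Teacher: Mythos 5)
Your proposal is correct and follows exactly the route the paper intends: the paper's own proof of Lemma~\ref{Lemma5.2} is a single sentence (``integrate against test functions and move derivatives onto $\varphi$''), and your argument is simply that sentence carried out in detail, with the borderline terms ${\rm IV}_r$, ${\rm IV}_s$ handled, as they must be, by the distributional gain $\partial_x\beta(X/\varepsilon)=\varepsilon\,\partial_X[\beta(X/\varepsilon)]=\BigOh{\varepsilon^{3/2}}$ from Lemma~\ref{Lemma3.5} and the cross-correlations by Lemmas~\ref{Lemma3.6} and~\ref{Lemma3.8}. One small imprecision: $s_1^2$ is only $\BigOh{\varepsilon^{-1}}$ as a distribution (the diagonal self-correlation of the $\varepsilon^{-1/2}\beta(X/\varepsilon)q$ piece of $s_1$ survives as $\varepsilon^{-1}{\sf E}(\beta^2)q^2$), not $\BigOh{1}$ as you assert, but every occurrence of $s_1^2$ carries at least $\varepsilon^{7/2}$ in front, so your conclusions are unaffected.
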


\begin{Lemma}\label{Lemma5.6}
Finally the linear term $-\partial_X(\sqrt{gh_\varepsilon} r) $
in the equation (\ref{eqrnew})  has the asymptotic behavior
\begin{equation}
-\partial_X(\sqrt{gh_\varepsilon} r)
= -\sqrt{gh} \partial_X\Big[ 
 \Big(1-\frac{\varepsilon}{2h} \beta(\frac{X}{\varepsilon})
- \frac{\varepsilon^2}{2h} (a_\beta 
 +\frac{1}{4h}
{\sf E}(\beta^2))  \Big)  r \Big] + o(\varepsilon^2).
\end{equation}  
\end{Lemma}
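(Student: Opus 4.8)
The plan is to reduce the assertion to the scalar homogenization result of Lemma~\ref{Lemma3.2}. Since the operator $-\partial_X$ appears on both sides of the identity and is continuous on ${\mathcal S}'$, it suffices to expand $\sqrt{g h_\varepsilon}\,r$ as a tempered distribution and then apply $-\partial_X$. Starting from the regularized depth \eqref{effective-h-eps}, I would factor out $h$ and use the binomial series $\sqrt{1+x} = 1 + \frac{1}{2}x - \frac{1}{8}x^2 + \BigOh{x^3}$ with $x = -\frac{\varepsilon}{h}\beta(\frac{X}{\varepsilon}) - \frac{\varepsilon^2}{h}a_\beta$. The linear part of the series reproduces the regularized velocity correction $-\frac{\varepsilon}{2h}\beta(\frac{X}{\varepsilon}) - \frac{\varepsilon^2}{2h}a_\beta$, while the only order-$\varepsilon^2$ contribution of the quadratic part is $-\frac{\varepsilon^2}{8h^2}\beta^2(\frac{X}{\varepsilon})$, all cross terms and the cubic remainder being $\BigOh{\varepsilon^3}$. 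This gives
\begin{equation}
\sqrt{g h_\varepsilon}
 = \sqrt{gh}\Bigl(1 - \frac{\varepsilon}{2h}\beta(\tfrac{X}{\varepsilon})
   - \frac{\varepsilon^2}{2h}a_\beta - \frac{\varepsilon^2}{8h^2}\beta^2(\tfrac{X}{\varepsilon})\Bigr) + \BigOh{\varepsilon^3} ~.
\end{equation}

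The heart of the argument is to replace the oscillating coefficient $\beta^2(\frac{X}{\varepsilon})$ by its ensemble average ${\sf E}(\beta^2)$. Since $\beta \in C^1$, the square $\beta^2$ is a continuous function of the stationary process and is therefore itself stationary and ergodic, as explained in the discussion following Lemma~\ref{Lemma3.2}; consequently Lemma~\ref{Lemma3.2} applies to $\gamma = \beta^2$. Testing $\sqrt{g h_\varepsilon}\,r$ against a function $\varphi \in {\mathcal S}$, the relevant term is $\frac{\varepsilon^2}{8h^2}\sqrt{gh}\int \beta^2(\frac{X}{\varepsilon})\, r(X,t)\,\varphi(X,t)\,dXdt$. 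To leading order the solution $r$ equals the slow profile $q(X - \sqrt{gh}t, \varepsilon^2 t)$ of \eqref{r-solution}, so that $r\,\varphi$ is, to leading order, a fixed integrable function of $X$ for each $t$; applying Lemma~\ref{Lemma3.2} in $X$ and integrating in $t$ by dominated convergence yields $\int \beta^2(\frac{X}{\varepsilon})\, q\,\varphi\,dX = {\sf E}(\beta^2)\int q\,\varphi\,dX + o(1)$. After multiplication by the prefactor $\frac{\varepsilon^2}{8h^2}\sqrt{gh}$ the $o(1)$ error becomes $o(\varepsilon^2)$, and the surviving contribution to the coefficient of $\sqrt{gh}\,r$ is exactly $-\frac{\varepsilon^2}{8h^2}{\sf E}(\beta^2) = -\frac{\varepsilon^2}{2h}\cdot\frac{1}{4h}{\sf E}(\beta^2)$, which is precisely the new piece inside the bracket of the claimed identity. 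Reassembling the terms and applying $-\partial_X$ reproduces the asserted formula.

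The main obstacle is to confirm that neither the fluctuation of $\beta^2$ about its mean nor the corrections to the leading profile of $r$ disturb the order-$\varepsilon^2$ balance. For the fluctuation I would note that $\beta^2 - {\sf E}(\beta^2)$ is a mean-zero stationary mixing process, so by the scale-separation mechanism of Lemma~\ref{Lemma3.3} its integral against a smooth function is $\BigOh{\sqrt{\varepsilon}}$; carrying the factor $\varepsilon^2$ this is $o(\varepsilon^2)$, sharper than required. For the solution, the expansion \eqref{r-solution} shows that $r$ differs from $q(X - \sqrt{gh}t, \varepsilon^2 t)$ by a term carrying an explicit factor $\frac{\varepsilon}{2h}\beta(\frac{X}{\varepsilon})$; multiplied by the $\frac{\varepsilon^2}{8h^2}\beta^2(\frac{X}{\varepsilon})$ coefficient this produces a contribution of formal size $\varepsilon^3\beta^3(\frac{X}{\varepsilon})$, which is $\BigOh{\varepsilon^3}$ and hence negligible. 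A final point requiring care, and the reason the distributional reading is indispensable, is that one must never differentiate $\beta(\frac{X}{\varepsilon})$ directly: $\partial_X\beta(\frac{X}{\varepsilon}) = \varepsilon^{-1}(\partial_x\beta)(\frac{X}{\varepsilon})$ would be singular in the limit. Keeping $\partial_X$ on the test function throughout, as the placement of $\partial_X$ outside the bracket in the statement dictates, avoids this difficulty and allows the homogenization to proceed term by term.
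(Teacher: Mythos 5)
Your proposal is correct and follows essentially the same route as the paper: expand $\sqrt{gh_\varepsilon}$ by the binomial series to isolate the $-\frac{\varepsilon^2}{8h^2}\beta^2$ term, split $\beta^2$ into ${\sf E}(\beta^2)$ plus a mean-zero stationary fluctuation controlled at order $\BigOh{\sqrt{\varepsilon}}$ by the scale-separation lemmas, and use $r = q + \BigOh{\varepsilon}$ to justify testing against the slow profile. The paper's proof is just a terser version of this same decomposition, performing the integration by parts $\langle \varphi, \partial_X(\beta^2 r)\rangle = \langle \varphi, {\sf E}(\beta^2)\partial_X r\rangle - \langle \partial_X\varphi, (\beta^2 - {\sf E}(\beta^2)) r\rangle$ up front.
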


\smallskip\noindent
The proofs of these lemmas are the content of Section \ref{proofsoflemmas}. 
Using these asymptotic results in system \eqref{eqr1}\eqref{eqs1},
and retaining only the leading terms, it reduces to \eqref{eqrnew}\eqref{eqsnew}, 
with possibly different parameter values. When the parameters are
chosen appropriately, the asymptotic behavior of the equations matches that of
the solutions and the consistency procedure is closed.
\begin{Thm} \label{theorem5.7}
The result of the consistency analysis is that the free parameters in 
equations (\ref{eqrnew})(\ref{eqsnew}) are
\begin{eqnarray}
&&a_{KdV}= \frac{1}{2h}     a_\beta +\frac{1}{4h^2} {\sf E}(\beta^2) 
  + \frac{3c_1}{8h^2\sqrt{gh}} {\sf E}( (\partial_x\beta)^2),
\\  
&&b= -\frac{7c_1}{64h^3}  {\sf E}((\partial_x\beta)^3).
\end{eqnarray}
\end{Thm}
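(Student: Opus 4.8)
The plan is to prove Theorem \ref{theorem5.7} by substituting the solution expressions \eqref{r-solution} and \eqref{s1-limit} into the full equations \eqref{eqr1}--\eqref{eqs1}, invoking Lemmas \ref{Lemma5.3}--\ref{Lemma5.6} to classify each term by its order in $\varepsilon$, and then matching the surviving $\BigOh{\varepsilon^2}$ contributions against the reduced equations \eqref{eqrnew}\eqref{eqsnew}. The constants $a_{KdV}$ and $b$ are then read off from this matching. I would treat the equation for $r$ and the equation for $s_1$ separately, since the first determines both parameters while the second must be checked to introduce none.

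For the $r$--equation I would first expand the reduced right-hand side of \eqref{eqrnew}. Writing $c_\varepsilon(X)=\sqrt{gh}\bigl(1-\frac{\varepsilon}{2h}\beta(X/\varepsilon)-\varepsilon^2 a_{KdV}\bigr)$, the leading transport piece and the $\BigOh{\varepsilon}$ scattering piece $\frac{\varepsilon}{2h}\sqrt{gh}\,\partial_X(\beta r)$ are common to both sides and cancel in the matching; the only adjustable contributions from \eqref{eqrnew} are the order-$\varepsilon^2$ term $\sqrt{gh}\,\varepsilon^2 a_{KdV}\,\partial_X r$ and the linear term $\varepsilon^2 b\,r$. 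On the full-equation side, Lemma \ref{Lemma5.6} shows that $-\partial_X(\sqrt{gh_\varepsilon}\,r)$ produces, beyond those common pieces, the extra $\partial_X r$ contribution $\sqrt{gh}\,\frac{\varepsilon^2}{2h}\bigl(a_\beta+\frac{1}{4h}{\sf E}(\beta^2)\bigr)\partial_X r$; Lemma \ref{Lemma5.3} gives a further $\partial_X r$ term from ${\rm II}_r$; and Lemma \ref{Lemma5.5} splits ${\rm III}_r$ into a $\partial_X r$ part and a bare-$r$ part. By Lemma \ref{Lemma5.2} the remaining terms ${\rm I}_r$ and ${\rm IV}_r$ are $o(\varepsilon^2)$ and drop out. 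Collecting every contribution proportional to $\varepsilon^2\partial_X r$ and dividing by $\sqrt{gh}$ yields
\begin{equation}
a_{KdV} = \Big(\frac{a_\beta}{2h}+\frac{{\sf E}(\beta^2)}{8h^2}\Big) + \frac{{\sf E}(\beta^2)}{8h^2} + \frac{3c_1}{8h^2\sqrt{gh}}{\sf E}((\partial_x\beta)^2),
\end{equation}
the three groups coming respectively from Lemma \ref{Lemma5.6}, from ${\rm II}_r$, and from the $\partial_X r$ part of ${\rm III}_r$; this collapses to the stated formula. Equating the bare-$r$ part of ${\rm III}_r$ with $\varepsilon^2 b\,r$ gives $b=-\frac{7c_1}{64h^3}{\sf E}((\partial_x\beta)^3)$, so both identities follow from bookkeeping of the lemmas. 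For the $s_1$--equation I would verify that no new constant is forced: by Lemmas \ref{Lemma5.2} and \ref{Lemma5.5} the terms ${\rm I}_s$, ${\rm III}_s$, ${\rm IV}_s$ are all $o(1)$, so the only surviving forcing at leading order is ${\rm II}_s$, which by Lemma \ref{Lemma5.4} reproduces exactly the inhomogeneous term of \eqref{eqsnew}, confirming that equation without additional parameters.

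The point requiring genuine care is to check that this matching is a true fixed point and not a circular definition, since the solution expressions \eqref{r-solution}\eqref{s1-limit} from which the lemmas are evaluated themselves depend on $a_{KdV}$ and $b$. I would resolve this by observing that the order-$\varepsilon^2$ coefficients produced in Lemmas \ref{Lemma5.3}--\ref{Lemma5.5} depend on $r$ only through its leading-order deterministic part $q(X-\sqrt{gh}t,\varepsilon^2 t)$: the influence of $a_{KdV}$ on the characteristic flow enters the solution only at order $\varepsilon^2$, and the term $bq$ alters $q$ over the KdV time scale only at order $\varepsilon^2 b$, so both are negligible at the order at which the constants are fixed. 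Hence the right-hand sides of the matching relations are independent of the input values of $a_{KdV}$ and $b$, and the fixed point is explicit rather than iterative. The main obstacle is therefore not in the present theorem but is carried entirely by the scale-separation and homogenization estimates of Lemmas \ref{Lemma5.2}--\ref{Lemma5.6} (which rest in turn on the ergodic averaging of Lemma \ref{Lemma3.2} and on Donsker's invariance principle); granting those, Theorem \ref{theorem5.7} is the assembly of their conclusions into the two displayed formulas.
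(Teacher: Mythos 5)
Your proposal is correct and follows essentially the same route as the paper: the theorem is obtained by assembling the conclusions of Lemmas \ref{Lemma5.3}--\ref{Lemma5.6}, collecting the $\varepsilon^2\partial_X r$ coefficients (which sum to $\frac{a_\beta}{2h}+\frac{1}{8h^2}{\sf E}(\beta^2)+\frac{1}{8h^2}{\sf E}(\beta^2)+\frac{3c_1}{8h^2\sqrt{gh}}{\sf E}((\partial_x\beta)^2)$, exactly as you group them) and reading $b$ off the bare-$r$ part of ${\rm III}_r$. Your closing observation that the matching is an explicit rather than iterative fixed point, because the $\BigOh{\varepsilon^2}$ coefficients depend only on the leading deterministic part $q$ of the solution, is a point the paper leaves implicit but relies on in the same way.
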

The parameter $a_{KdV}$ represents  an adjustment at $\BigOh{\varepsilon^2}$ to
the overall wavespeed, while the sign of $b$  governs the stability  
of solutions. In many cases, $b$ vanishes.

\begin{Prop} \label{Proposition5.7}
If the statistics of the ensemble $(\Omega,\mathcal{M},\sf{P})$ are reversible in $x$, then $b=0$.
\end{Prop}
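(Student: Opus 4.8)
The plan is to reduce the statement entirely to the vanishing of a single third moment. By Theorem~\ref{theorem5.7} we have $b = -\frac{7c_1}{64h^3}{\sf E}((\partial_x\beta)^3)$, so it suffices to show that the reversibility hypothesis forces ${\sf E}((\partial_x\beta)^3) = 0$; the conclusion $b=0$ is then immediate. The whole argument is a parity cancellation, so the task is really to set up the reflection symmetry precisely.

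First I would fix the meaning of reversibility. To say that the statistics of the ensemble $(\Omega,\mathcal{M},{\sf P})$ are reversible in $x$ means that the reflected process $\tilde\beta(x;\omega) := \beta(-x;\omega)$ has the same law as $\beta$, that is ${\sf E}(\Phi(\tilde\beta)) = {\sf E}(\Phi(\beta))$ for every measurable functional $\Phi$ on path space. Since $\tilde\beta$ is bounded in $C^1$ for a.e.\ realization, the derivative process $\partial_x\beta$ is well defined pointwise, and by stationarity the third moment ${\sf E}((\partial_x\beta(x))^3)$ does not depend on $x$; I would evaluate it at $x=0$.

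Next I would track how the reflection acts on the derivative. Differentiating $\tilde\beta(x) = \beta(-x)$ gives $\partial_x\tilde\beta(x) = -(\partial_x\beta)(-x)$, so at the origin $\partial_x\tilde\beta(0) = -\partial_x\beta(0)$. Applying the reversibility identity to the odd functional $\Phi(\gamma) = (\partial_x\gamma(0))^3$ then yields
\[
   {\sf E}((\partial_x\beta(0))^3) = {\sf E}((\partial_x\tilde\beta(0))^3) = {\sf E}((-\partial_x\beta(0))^3) = -{\sf E}((\partial_x\beta(0))^3),
\]
so that ${\sf E}((\partial_x\beta)^3) = 0$ and hence $b = 0$.

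The argument is short, and the only point requiring care is the bookkeeping of signs: spatial reflection sends $\beta \mapsto \beta(-\cdot)$ and thereby flips the sign of the derivative, so the cubic (odd) moment is forced to cancel, while even moments such as ${\sf E}((\partial_x\beta)^2)$ and $a_\beta = {\sf E}(\beta D_x\tanh(hD_x)\beta)$, which enter $a_{KdV}$, are left unaffected and need not vanish. Thus the main obstacle is not analytic but conceptual, namely fixing the precise notion of reversibility and confirming that it applies to this particular functional of the derivative process; once that is settled, the result follows from the odd/even parity dichotomy.
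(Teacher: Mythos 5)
Your proposal is correct and is essentially the paper's own argument: the paper defines reversibility as invariance of ${\sf P}$ under $x\mapsto -x$ and notes this forces ${\sf E}((\partial_x\beta)^3)=0$, whence $b=0$ from the formula in Theorem~\ref{theorem5.7}. You simply spell out the sign-flip of the derivative under reflection, which the paper leaves implicit.
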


By reversible, we  mean that the inversion $x\to -x$ preserves the probability 
measure ${\sf P}$, implying that
${\sf E}((\partial_x\beta)^3) =0$.

\subsection{Proofs of the above lemmas}\label{proofsoflemmas}

 In the analysis of the numerous integrals
that go in to this consistency result, it is convenient
to use the bracket notation as shorthand for integrations;
$$
  \langle f ,g\rangle := \int \int_{\Real^2} f(X,t) g(X,t) dXdt ~.
$$ 

\smallskip\noindent
{\em Proof of Lemma \ref{Lemma5.3}:}
We first rewrite ${\rm{II}}_r$ as
\begin{equation}
{\rm{II}}_r= \frac{\sqrt{g}}{2} \varepsilon^{3/2}
\partial_X (\sqrt{h_\varepsilon}) s_1 = \frac{\sqrt{g}}{2} \varepsilon^{3/2}
\partial_X (\sqrt{h_\varepsilon} - {\sf E}(\sqrt{h_\varepsilon}))s_1.
\end{equation}
For any test function $\varphi(X,t)$, we compute 
$\langle \varphi, {\rm{II}}_r\rangle$ by 
substituting the expression (\ref{s_1-solution}) for $s_1$. This 
gives two terms, the first being
\begin{equation}
 \varepsilon^{3/2}\frac{\sqrt{g}}{2}
\langle  \varphi, 
 \partial_X(\sqrt{h_\varepsilon} -{\sf E}(\sqrt{h_\varepsilon})) 
s_1^0\rangle 
 = -  \frac{\sqrt{g}}{2}\varepsilon^{3/2}
\langle (\sqrt{h_\varepsilon} - {\sf E}(\sqrt{h_\varepsilon})) ,
 \partial_X(s_1^0\varphi) \rangle.
\end{equation}
Since
\begin{equation}
{\sf E}(\sqrt{h_\varepsilon}) = \sqrt{h}+\BigOh{\varepsilon^2},
\nonumber
\end{equation}
and because
\begin{equation}
\sqrt{h_\varepsilon}-{\sf E}(\sqrt{h_\varepsilon}) =
\sqrt{h}\big(1 -\frac{\varepsilon}{2h} \beta(\frac{X}{\varepsilon})\big) -\sqrt{h}
+ \BigOh{\varepsilon^2}  = 
-\frac{\varepsilon}{2\sqrt{h}} \beta(\frac{X}{\varepsilon})+
\BigOh{ \varepsilon^2},
\end{equation}
the first term in $\langle \varphi, {\rm{II}}_r\rangle$ is of
order $o(\varepsilon^2)$.
The second term in the expression of $ \langle \varphi, {\rm{II}}_r\rangle$
is
 \begin{equation}
A:=\frac{\sqrt{g}}{8h}  \langle \varphi, \partial_X (\sqrt{h_\varepsilon}
-{\sf E}(\sqrt{h_\varepsilon}) )\int_X^{X + \sqrt{gh}t}
\partial_x \beta\big(\frac{\theta}{\varepsilon}\big)
r\big(\theta, t+ \frac{X-\theta}{\sqrt{gh}}\big) d\theta \rangle.
\end{equation}
By integration by parts,
\begin{equation}
\begin{array}{l}
A =
-\frac{\sqrt{g}}{8h}
~\langle\partial_X \varphi, (\sqrt{h_\varepsilon}
-{\sf E}(\sqrt{h_\varepsilon}) )
  \int_X^{X + \sqrt{gh}t}
\partial_x \beta\big(\frac{\theta}{\varepsilon}\big)
r\big(\theta, t+ \frac{X-\theta}{\sqrt{gh}}\big) d\theta \rangle
 \\
\noalign{\vskip7pt}
\qquad 
-\frac{\sqrt{g}}{8h}  \langle
\varphi,\big(\sqrt{h_\varepsilon} -{\sf E}(\sqrt{h_\varepsilon})\big)
\frac{1}{\sqrt{gh}}
\int_X^{X+\sqrt{gh}t}\partial_x \beta\big(\frac{\theta}{\varepsilon}\big)
\partial_t r\big(\theta, t+ \frac{X-\theta}{\sqrt{gh}}\big) d\theta
\rangle
 \\
\noalign{\vskip7pt}
\qquad-\frac{\sqrt{g}}{8h}  \langle
\varphi,\big(\sqrt{h_\varepsilon} -{\sf E}(\sqrt{h_\varepsilon})\big)
\Big[ \partial_x 
\beta(\frac{X+\sqrt{gh}t}{\varepsilon}) r^0(X+\sqrt{gh}t)
-\partial_x 
\beta(\frac{X}{\varepsilon}) r(X,t) \Big] \rangle
 \\
\noalign{\vskip7pt}
\quad= -\frac{\sqrt{g}}{8h}
~\langle(\partial_X - \frac{1}{\sqrt{gh}}\partial_t )
\varphi, (\sqrt{h_\varepsilon} -{\sf E}(\sqrt{h_\varepsilon}) )
  \int_X^{X + \sqrt{gh}t}
\partial_x \beta\big(\frac{\theta}{\varepsilon}\big)
r\big(\theta, t+ \frac{X-\theta}{\sqrt{gh}}\big) d\theta \rangle
 \\
\noalign{\vskip7pt}
 \qquad 
+\frac{\sqrt{g}}{8h}  ~\langle
\varphi,\big(\sqrt{h_\varepsilon} -{\sf E}(\sqrt{h_\varepsilon})\big)
 \partial_x 
\beta(\frac{X}{\varepsilon}) r(X,t) \rangle\\
\quad\equiv A_1+A_2 .
\end{array}
\end{equation}
Analyze the second term first,
\begin{equation}
A_2 =  - \frac{1}{32h} \sqrt{\frac{g}{h}}\varepsilon
\langle \varphi, \partial_x(\beta^2)~ r \rangle.
\end{equation}
Replacing $r$ by its expression (\ref{r-solution}),
\begin{equation}
\begin{array}{l}
A_2 = 
- \frac{1}{64h^2} \sqrt{\frac{g}{h}}\varepsilon
~\langle \varphi, \partial_x (\beta^2)
\Big[\varepsilon^2  \partial_X q 
\int_{\frac{X-\sqrt{gh}t}{\varepsilon}}^{\frac{X}{\varepsilon}}
 \beta(t') \, dt' 
\\
\noalign{\vskip6pt}
\qquad \qquad
+ \varepsilon q ~\big(\beta(\frac{X}{\varepsilon}) - 
\beta(\frac{X-\sqrt{gh}t}{\varepsilon}) \big)
\Big] \rangle +\BigOh{\varepsilon^{\frac52}}. 
\end{array}
\label{A2}
\end{equation}
By integration by parts, the first term of $A_2$ is $o(\varepsilon^2)$.
The second to the last term of $A_2$ can be rewritten 
as 
\begin{equation}
- \frac{1}{64h^2} \sqrt{\frac{g}{h}}\varepsilon^2 ~\langle \varphi,
\partial_x (\frac{2}{3}\beta^3) ~q\rangle
\end{equation}
which  again by integration by parts  contributes to  $o(\varepsilon^2)$.
The last term of $A_2$  contributes only
$o(\varepsilon^2)$
 due to Lemma \ref{Lemma3.6}. Now turn to $A_1$.
\begin{equation}
\begin{array}{l}
A_1=-\frac{1}{16h} \sqrt{\frac{g}{h}}
 \varepsilon^2  \langle (\partial_X- \frac{1}{\sqrt{gh}}\partial_t) \varphi,  
\beta(\frac{X}{\varepsilon})
  \int_X^{X + \sqrt{gh}t}  \beta\big(\frac{\theta}{\varepsilon}\big)
\frac{d}{d\theta}r\big(\theta, t+ \frac{X-\theta}
{\sqrt{gh}}\big) d\theta \rangle 
\\
\noalign{\vskip6pt}
\qquad + \frac{1}{16h} \sqrt{\frac{g}{h}}\varepsilon^2  \langle
 (\partial_X- \frac{1}{\sqrt{gh}}\partial_t)\varphi,  
\beta(\frac{X}{\varepsilon})
\Big[\beta(\frac{X+\sqrt{gh}t}{\varepsilon}) r^0(X+\sqrt{gh}t)
-\beta(\frac{X}{\varepsilon}) r(X,t)\Big] \rangle\\
\noalign{\vskip6pt}
\qquad+o(\varepsilon^2). 
\end{array}
\label{termA1}
\end{equation}
The first term in the second line of 
 $A_1$ is $o(\varepsilon^2)$ due to Lemma \ref{Lemma3.6}.
The last term of $A_1$ is 
\begin{equation}
\begin{array}{l}
-\frac{\varepsilon^2}{16h} \sqrt{\frac{g}{h}}{\sf E}(\beta^2)  ~\langle
(\partial_X- \frac{1}{\sqrt{gh}}\partial_t) \varphi,  
 q ~\rangle + o(\varepsilon^2) \\
\noalign{\vskip6pt}
\qquad = 
\frac{\varepsilon^2}{8h} \sqrt{\frac{g}{h}}{\sf E}(\beta^2)~\langle
\varphi,\partial_X q ~\rangle +
 o(\varepsilon^2),
\end{array}
\end{equation}
leading to a contribution to ${\rm II}_r$  of
\begin{equation}
\frac{\varepsilon^2  }{8h} \sqrt{\frac{g}{h}}  
{\sf E}(\beta^2) \partial_X r (X ,t) +
 o(\varepsilon^2).
\end{equation}
We now turn to the first term of $A_1$ which  we denote $A_3$, and write it 
as 
\begin{equation}
\begin{array}{l}
A_3= 
-\frac{\varepsilon^2}{16h} \sqrt{\frac{g}{h}}
  \langle (\partial_X- \frac{1}{\sqrt{gh}}\partial_t) \varphi,  
\beta(\frac{X}{\varepsilon})
  \int_X^{X + \sqrt{gh}t}  \beta\big(\frac{\theta}{\varepsilon}\big)
 (\partial_X- \frac{1}{\sqrt{gh}}\partial_t)  r 
(\theta, t+ \frac{X-\theta}
{\sqrt{gh}}\big) d\theta \rangle.  
\end{array}
\end{equation}
We express $(\partial_X- \frac{1}{\sqrt{gh}}\partial_t) r$ in terms of $q$
as 
\begin{equation} 
(\partial_X- \frac{1}{\sqrt{gh}}\partial_t) r (X,t) = 
2 \partial_X q + \frac{1}{2h}  \Big( \partial_x\beta(\frac{X}{\varepsilon})
- 2\partial_x\beta(\frac{X -\sqrt{gh}t}{\varepsilon}) \Big) q 
+\BigOh{\varepsilon}.
\end{equation}
Substitution of the above in $A_3$ gives rise to three terms,
$(i)$, $(ii)$, and $(iii)$ which have the form (after we have dropped  
 the constants):
\begin{equation}
\begin{array}{l}
(i) = \varepsilon^2\langle  (\partial_X- \frac{1}{\sqrt{gh}}\partial_t) \varphi,
\beta (\frac{X}{\varepsilon})
\int_X^{X+\sqrt{gh}t }
\beta (\frac{\theta}{\varepsilon}) \partial_Xq (2\theta-X-\sqrt{gh}t,
\varepsilon^2(  t+ \frac{X-\theta}{\sqrt{gh}})\big) d\theta \rangle 
\\
\noalign{\vskip9pt}
(ii) = \varepsilon^2
\langle  (\partial_X- \frac{1}{\sqrt{gh}}\partial_t) \varphi,
\beta (\frac{X}{\varepsilon})
\int_X^{X+\sqrt{gh}t }
\frac{1}{2}\partial_x (\beta^2 (\frac{\theta}{\varepsilon}))
 q (2\theta-X-\sqrt{gh}t,\varepsilon^2
   (t+ \frac{X-\theta}{\sqrt{gh}})\big) d\theta \rangle 
\\
\noalign{\vskip9pt}
(iii) =\varepsilon^2 
\langle 
(\partial_X- \frac{1}{\sqrt{gh}}\partial_t) \varphi,\\
\noalign{\vskip5pt}
\qquad\qquad\qquad\beta(\frac{X}{\varepsilon})
\int_X^{X+\sqrt{gh}t }
\beta (\frac{\theta}{\varepsilon})  \partial_x\beta
(\frac{2\theta -X-\sqrt{gh}t} {\varepsilon})
  q (2\theta-X-\sqrt{gh}t,\varepsilon^2
  ( t+ \frac{X-\theta}{\sqrt{gh}})) d\theta \rangle.
\end{array}
\nonumber
\end{equation}
The term $(i)$ is of the form 
\begin{equation} 
\langle \int_X^{X+\sqrt{gh}t } \beta(\frac{X}{\varepsilon})
\beta(\frac{\theta}{\varepsilon}) \psi(\theta,X,t) d\theta \rangle.
\end{equation}
Applying Lemma \ref{Lemma3.8}, we show that 
this term is $\BigOh{\varepsilon^3}$ , 
and thus does not contribute to the limit of  ${\rm II}_r$. 
By integration by parts, the term $(ii)$
is $\BigOh{\varepsilon^3}$. Finally, for  term $(iii)$, we write
$ \partial_x \beta( \frac{2\theta- X-\sqrt{gh}t}{\varepsilon})
=-\frac{\varepsilon}{\sqrt{gh}} \frac{d}{dt} 
\beta( \frac{2\theta- X -\sqrt{gh}t}{\varepsilon})$, 
leading to $(iii)$ being again of order $\BigOh{\varepsilon^3}$.

\qed

\medskip\noindent
{\em Proof of Lemma \ref{Lemma5.4}:}
Using that $h_\varepsilon = h  -\varepsilon \beta(\frac{X}{\varepsilon}) 
+\BigOh{\varepsilon^2} $
\begin{equation}
{\rm{II}}_s =
 \frac{1}{4} \partial_x \beta(\frac{X}{\varepsilon})
\sqrt{\frac{g}{h}}\varepsilon^{-3/2}
 \Big(1+ \frac{\varepsilon}{2h} \beta (\frac{X}{\varepsilon})\Big) r.
\label{auxil1}
\end{equation}
Since  $r(X,t)= q(X-\sqrt{gh}t,\varepsilon^2 t) +\BigOh{\varepsilon}$,
the second term of (\ref{auxil1}) is
\begin{equation}
 \frac{\varepsilon^{-1/2}}{16h}\sqrt{\frac{g}{h}}
 \partial_x \beta^2(\frac{X}{\varepsilon})
 \Big(   q(X-\sqrt{gh}t,\varepsilon^2 t) +\BigOh{\varepsilon}\Big)
= \BigOh{\varepsilon^{1/2}}
\end{equation}
due to Lemma \ref{Lemma3.5}.
Compute the limit as $\varepsilon \to 0$ of 
${\rm{II}}_s$. Substituting the expression
(\ref{r-solution}) for $r$, we get, for any test function $\varphi(x,t)$
\begin{equation}
\begin{array}{lcl}
\langle \varphi,  \frac{\varepsilon^{-\frac32}}{4}\sqrt{\frac{g}{h}}
 \partial_x \beta(\frac{X}{\varepsilon}) r \rangle 
&=&\frac{\varepsilon^{-\frac32}}{4}\sqrt{\frac{g}{h}}~\langle \varphi,
\partial_x \beta(\frac{X}{\varepsilon})q\rangle \\
\noalign{\vskip6pt}
&& +\frac{\varepsilon^{-\frac12}}{4}\sqrt{\frac{g}{h}}~\langle \varphi,
\partial_x \beta(\frac{X}{\varepsilon}) \partial_X q
\frac{\varepsilon}{2h} \int_{\frac{X-\sqrt{gh}t}{\varepsilon}}^{
\frac{X}{\varepsilon}} 
\beta(t') \, dt' \rangle\\
\noalign{\vskip6pt}
&& +\frac{\varepsilon^{-\frac32}}{4}\sqrt{\frac{g}{h}}~\langle \varphi,
\partial_x \beta(\frac{X}{\varepsilon})\frac{\varepsilon}{2h} q 
\Big( \beta(\frac{X}{\varepsilon}) -\beta(\frac{X-\sqrt{gh}t}
{\varepsilon})  \Big)\rangle.
\end{array}
\label{asympauxIIs}
\end{equation}
The first term of the RHS of (\ref{asympauxIIs}) tends 
to the first term of (\ref{asympIIs}) by application of Lemma \ref{Lemma3.5}.
The second term of (\ref{asympauxIIs})
 is rewritten, by integration by parts, as
\begin{equation}
\begin{array}{l}
-\frac{\varepsilon^{\frac12}}{8h}\sqrt{\frac{g}{h}} ~\langle \partial_X(\varphi
\partial_X q),
\beta(\frac{X}{\varepsilon})
 \int_{(X-\sqrt{gh}t)/\varepsilon}^{
\frac{X}{\varepsilon}} 
\beta(t') \, dt' \rangle
- \frac{\varepsilon^{\frac12}}{8h}\sqrt{\frac{g}{h}} \langle
\varphi,\beta^2(\frac{X}{\varepsilon}) \partial_X q \rangle\\
\noalign{\vskip6pt}
\qquad+ \frac{\varepsilon^{\frac12}}{8h}\sqrt{\frac{g}{h}} \langle
\varphi,\beta(\frac{X}{\varepsilon}) \beta(\frac{X-\sqrt{gh}t}{\varepsilon})
 \partial_X q\rangle.
\end{array}
\nonumber
\end{equation}
Clearly  all terms are $o(1)$.
The third term of (\ref{asympauxIIs}) is rewritten 
\begin{equation}
\begin{array}{l}
\frac{\varepsilon^{-\frac12}}{16h}\sqrt{\frac{g}{h}} \langle \varphi,
\partial_x\beta^2(\frac{X}{\varepsilon}) q \rangle - 
\frac{\varepsilon^{-\frac12}}{8h}\sqrt{\frac{g}{h}} ~\langle \varphi,
\beta(\frac{X}{\varepsilon}) \partial_x \beta(
\frac{X-\sqrt{gh}t}{\varepsilon})  q\rangle,
\end{array}
\end{equation}
which is $o(1)$ by application of Lemmas \ref{Lemma3.5} and \ref{Lemma3.6}.

\qed

\medskip\noindent
{\em Proof of Lemma \ref{Lemma5.5}:}
Decompose ${\rm{III}}_r$ as the sum of the two terms 
\begin{eqnarray}
&&C=\frac{c_1}{4}\varepsilon^2 \frac{\partial_x \beta(\frac{X}{\varepsilon})}
{h_\varepsilon}   \partial_X^2 r ,\label{termCr}
\\
&&D=-\frac{c_1}{4} \varepsilon^{2+3/2}
\frac{\partial_x \beta(\frac{X}{\varepsilon})}
{h_\varepsilon}   \partial_X^2 s_1 .\label{termDr}
\end{eqnarray}
We compute $\partial_X^2 r $ from (\ref{r-solution}) and do not write terms
that will clearly  give a contribution  of $o(\varepsilon^2)$.
We get
\begin{equation}
\langle \varphi , C \rangle =    \frac{c_1}{8 h} \varepsilon^2
\langle  \varphi, \frac{\partial_x\beta}{h_\varepsilon} 
(3 \partial_X q\partial_x\beta   + \varepsilon^{-1}  q\partial_x^2 \beta  )
 \rangle  +o(\varepsilon^2). \label{termC}
\end{equation}
The first term in (\ref{termC}), denoted $C_1$  is
\begin{equation}
\langle \varphi ,C_1\rangle=  \frac{3c_1}{8 h^2} 
\varepsilon^2 {\sf E}((\partial_x\beta)^2)
\langle  \varphi, \partial_X q(X-\sqrt{gh}t,\varepsilon^2t) \rangle.
\end{equation}
We substitute $\partial_X q$ in terms of $r$ in $C_1$ using (\ref{r-solution})
and we write 
\begin{equation}
\partial_X q= \partial_X r - \frac{q}{2h}  \Big( \partial_x 
\beta(\frac{X}{\varepsilon}) - 
\partial_x \beta(\frac{X-\sqrt{gh}t}{\varepsilon})\Big) + \BigOh{\varepsilon}.
\end{equation}
We then conclude that $C_1$ can be written as a functional of $r$ as
\begin{equation}
\langle \varphi, C_1 \rangle =  \frac{3c_1}{8 h^2} \varepsilon^2 {\sf E}((\partial_x\beta)^2)
\langle  \varphi, \partial_X  r \rangle.
\end{equation}
The second term in $C$, denoted by $C_2$ is
\begin{eqnarray}
\langle \varphi, C_2 \rangle&=&   \frac{c_1}{16 h} \varepsilon
\langle  \varphi, \frac{1}{h_\varepsilon} \partial_x
(\partial_x\beta)^2  q
 \rangle  \nonumber\\
&=&  \frac{c_1}{16 h^2} \varepsilon \langle ~ \varphi, \partial_x
(\partial_x\beta)^2 (1+ \frac{\varepsilon}{h} \beta) ~q\rangle+
 \BigOh{\varepsilon^3} \nonumber \\
&=&  \frac{c_1}{16 h^3} \langle  \varphi,\varepsilon^2 \partial_x
(\partial_x\beta)^2 \beta q\rangle +
 o(\varepsilon^2) \nonumber \\
&=& - \frac{c_1}{16 h^3} \varepsilon^2 {\sf E}
\big((\partial_x\beta)^3\big)  \langle \varphi,q\rangle +
 o(\varepsilon^2).
\end{eqnarray}
We conclude that the term $C$ of ${\rm{III}}_r$ is 
\begin{equation}
C=  \frac{3c_1}{8 h^2} \varepsilon^2 {\sf E}((\partial_x\beta)^2)
\partial_X  r - \frac{c_1}{16 h^3} \varepsilon^2 {\sf E}
\big((\partial_x\beta)^3\big) r + o(\varepsilon^2).
\end{equation}
We compute the term $D$  of ${\rm{III}}_r$
given in (\ref{termDr}).
For this, we compute $\partial_X ^2 s_1$ in terms of $r$ and get:
\begin{equation}
\begin{array}{l}
\partial_X ^2 s_1(X,t) =\partial_X ^2 s_1^0(X+ \sqrt{gh}t)
 + \frac{1}{4h}\varepsilon^{-\frac32} \Big[
\frac{1}{\varepsilon} \partial_x^2 \beta(\frac{X+\sqrt{gh}t}{\varepsilon})
r^0(X+\sqrt{gh}t)  \\
\noalign{\vskip8pt}
\qquad+ \partial_x \beta(\frac{X+\sqrt{gh}t}{\varepsilon})
\partial_X r^0(X+\sqrt{gh}t)
- \frac{1}{\varepsilon} \partial_x^2 \beta(\frac{X}{\varepsilon}) r(X,t) 
- \partial_x \beta(\frac{X}{\varepsilon}) \partial_X r(X,t)  
 \\
\noalign{\vskip8pt}
\qquad+ \frac{1}{\sqrt{gh}} \partial_x \beta(\frac{X+\sqrt{gh}t}{\varepsilon})
 \partial_t r(X+\sqrt{gh}t,0) - \frac{1}{\sqrt{gh}}
 \partial_x \beta(\frac{X}{\varepsilon})\partial_t r(X,t)  \\
\noalign{\vskip8pt}
\qquad+\frac{1}{gh} \int_X^{X+\sqrt{gh}t}
\partial_x\beta\big(\frac{\theta}{\varepsilon}\big)
\partial_{tt} r\big(\theta, t+ \frac{X-\theta}{\sqrt{gh}}\big) d\theta
\Big].
\end{array}
\end{equation}
All terms containing the process $\beta$ or its derivatives at
two different points $X/\varepsilon$ and $(X+\sqrt{gh}t)/\varepsilon$
will not contribute because of Lemma \ref{Lemma3.6}. The term containing
$s_1^0$ will be $o(\varepsilon^2)$. The remaining terms that need attention
are
\begin{equation} \label{termD1}
\begin{array}{l}
\frac{c_1 \varepsilon^2}{16h}
  \langle \varphi,\frac{\partial_x\beta}{h_\varepsilon}
\Big[ \varepsilon^{-1} \partial_x^2 \beta ~r(X,t) + \partial_x\beta
( \partial_X r(X,t)+\frac{1}{\sqrt{gh}} \partial_t r(X,t)) \Big] \rangle
\\
\noalign{\vskip6pt}
\qquad - \frac{c_1 \varepsilon^2}{16} \frac{1}{gh^2}
  \langle \varphi,\frac{\partial_x\beta}{h_\varepsilon}
\int_X^{X+\sqrt{gh}t}
\partial_x\beta\big(\frac{\theta}{\varepsilon}\big)
\partial_{tt} r\big(\theta, t+ \frac{X-\theta}{\sqrt{gh}}\big) d\theta
\rangle.
\end{array}
\end{equation}
Noting that
 $ \partial_X r + \frac{1}{\sqrt{gh}} \partial_t r = \BigOh{\varepsilon}$,
we have that 
\begin{equation}
\frac{c_1 \varepsilon^2}{16h}
  \langle \varphi,\frac{\partial_x\beta}{h_\varepsilon}
\Big( \partial_x\beta
( \partial_X r(X,t)+\frac{1}{\sqrt{gh}} \partial_t r(X,t)) \Big) \rangle
= \BigOh{\varepsilon^3}.
\end{equation}
The first term in (\ref{termD1}) has the form
\begin{equation}\label{5.72}
\begin{array}{l}
\frac{c_1 \varepsilon}{32h^2}
  \langle \varphi, (1+ \frac{\varepsilon}{h} \beta(\frac{X}{\varepsilon}))
\partial_x( (\partial_x\beta)^2) r \rangle +o(\varepsilon^2)
 \\
\noalign{\vskip7pt}
\quad = \frac{c_1 \varepsilon}{32h^2}
  \langle \varphi, \partial_x( (\partial_x\beta)^2 -
{\sf E}((\partial_x\beta)^2) )r \rangle
- \frac{c_1 \varepsilon^2}{32h^3}
\langle \varphi,{\sf E}((\partial_x\beta)^3) r \rangle +o(\varepsilon^2).
\end{array}
\end{equation}
Integrating by parts the first term of \eqref{5.72}, we get two
 contributions; when the derivative acts on $\varphi$, it is 
$o(\varepsilon^2)$ using Lemma \ref{Lemma3.5} and the fact that 
$r= q+\BigOh{\varepsilon}$. When the derivative acts on $r$, we get:
\begin{equation}\label{termD2}
-\frac{c_1 \varepsilon^2}{32h^2}
  \langle ~\varphi,\big( (\partial_x\beta)^2 -
{\sf E}((\partial_x\beta)^2)\big)\partial_X  r ~\rangle.
\end{equation}
Here we replace $\partial_X r$ by its expression in terms of $q$:
\begin{equation}
\partial_X r= \partial_X q + \frac{1}{2h}  q \Big( \partial_x \beta
(\frac{X}{\varepsilon}) -\partial_x \beta(\frac{X-\sqrt{gh}t}{\varepsilon})
\Big) +\BigOh{\varepsilon}.
\end{equation}
The resulting contribution for (\ref{termD2}) is
\begin{equation} 
-\frac{c_1 \varepsilon^2}{32h^2}
  \langle ~\varphi,\big((\partial_x\beta)^2 -
{\sf E}((\partial_x\beta)^2)\big)\partial_X  r ~\rangle
=- \frac{c_1 \varepsilon^2}{64h^3}
\langle ~\varphi,{\sf E}((\partial_x\beta)^3) r ~\rangle +o(\varepsilon^2).
\end{equation}
The last term to consider is the fourth term of (\ref{termD1}) where
the derivatives with respect to $t$ can be moved outside the integral 
using the fact that
\begin{equation} \label{5.76}
\begin{array}{l}
\int_X^{X+\sqrt{gh}t}
\partial_x\beta\big(\frac{\theta}{\varepsilon}\big)
\partial_{tt} r\big(\theta, t+ \frac{X-\theta}{\sqrt{gh}}\big) d\theta
=\partial_{tt} \int_X^{X+\sqrt{gh}t}
\partial_x\beta\big(\frac{\theta}{\varepsilon}\big)
 r\big(\theta, t+ \frac{X-\theta}{\sqrt{gh}}\big) d\theta \\
\noalign{\vskip7pt}
\qquad-\sqrt{gh} \partial_x\beta(\frac{X+\sqrt{gh}t}{\varepsilon}) 
\partial_t r(X+\sqrt{gh}t,0)- 
gh\partial_x\beta(\frac{X+\sqrt{gh}t}{\varepsilon}) 
\partial_X r(X+\sqrt{gh}t,0)
\\
\noalign{\vskip7pt}
\qquad -
\frac{gh}{\varepsilon} \partial_{xx}\beta(\frac{X+\sqrt{gh}t}{\varepsilon})
 r(X+\sqrt{gh}t,0).
\end{array}
\end{equation}
Using Lemma \ref{Lemma3.6} again,
\begin{equation}
\begin{array}{l}
- \frac{c_1 \varepsilon^2}{16} \frac{1}{gh^2}
  \langle \varphi,\frac{\partial_x\beta}{h_\varepsilon}
\int_X^{X+\sqrt{gh}t}
\partial_x\beta\big(\frac{\theta}{\varepsilon}\big)
\partial_{tt} r\big(\theta, t+ \frac{X-\theta}{\sqrt{gh}}\big) d\theta\rangle
\\
\noalign{\vskip7pt}
\qquad
= - \frac{c_1 \varepsilon^2}{16} \frac{1}{gh^2}
  \langle \partial_{tt}\varphi,\frac{\partial_x\beta}{h_\varepsilon}
\int_X^{X+\sqrt{gh}t}
\partial_x\beta\big(\frac{\theta}{\varepsilon}\big)
 r\big(\theta, t+ \frac{X-\theta}{\sqrt{gh}}\big) d\theta\rangle
+o(\varepsilon^2).
\end{array}
\end{equation}
Using the derivative  in the first factor of $\partial_x\beta $
appearing in the above expression and integrating by parts leads
to the appearance of an additional $\varepsilon$, making the
expression  $\BigOh{\varepsilon^3}$.
We have obtained that 
\begin{equation} 
D = -\frac{3 c_1\varepsilon^2 }{64h^3}{\sf E}((\partial_x\beta)^3) r 
+o(\varepsilon^2).
\end{equation}
Adding the expression for $C$ and $D$ , we have shown that (\ref{IIIr}) 
describes the asymptotic behavior of  ${\rm {III_r}}$.
\qed

\medskip\noindent
{\em Proof of Lemma \ref{Lemma5.2}:} Following  the criterion of 
Section 3, these terms are integrated  against
test functions $\varphi$ , and derivatives can be moved to $\varphi$ by
integration by parts. 

\qed

\medskip\noindent
{\it Proof of Lemma \ref{Lemma5.6}}. \
The regularized depth $h_\varepsilon$ is defined as
$h_\varepsilon(X) = h -\varepsilon \beta(\frac{X}{\varepsilon}) -\
\varepsilon ^2 a_\beta$. Thus the regularized linear wave speed is
\begin{equation}
\sqrt{g h_\varepsilon} = 
\sqrt{gh}\big(1- \frac{\varepsilon}{2h} \beta
-\varepsilon^2\frac{a_\beta}{2h} -\varepsilon^2\frac{\beta^2}{8h^2}\big)
+o(\varepsilon^2).
\end{equation}
The term $\langle \varphi, \partial_X(\beta^2 r)\rangle $ is
calculated as
\begin{equation}\label{linearterm}
\langle \varphi, \partial_X (\beta^2 r)\rangle =
\langle \varphi, {\sf E}(\beta^2)\partial_X r\rangle
- \langle \partial_X\varphi, (\beta^2- {\sf E}(\beta^2)) r\rangle.
\end{equation}
Using that $r =q + \BigOh{\varepsilon}$, we get that the second term in 
(\ref{linearterm}) is   $\BigOh{\sqrt{\varepsilon}}$.
\qed
%
%

\section{Remarks on the  expectation of solutions}
\setcounter{equation}{0}
It is normal to calculate ${\sf E}( r(X,t,\omega)) =p(X,t)$ as a basic
prediction of the solution $r(X,t,\omega)$ itself.  We remark
that $r(X,t,\omega)$ is  a realization dependent function where the randomness
manifests itself on the same level as dispersive and nonlinear effects.
In  the paper \cite{NS03} on {\it{apparent diffusion}}, the authors present an
 analysis of the function $p(X,t)$ in the case of the linear water wave
 problem with bottom given by $ \{ y=-h +\sqrt{\varepsilon}
 \beta(X/\varepsilon) \}$. 
In the fully nonlinear regime of the 
present paper, diffusion is weaker, and occurs only on time scales
larger than those of $\BigOh{1}$ in KdV time $\tau$, as the 
following calculation shows. 

In the sense of weak limits of probability measures,
as  $\varepsilon \to 0$, 
\begin{equation}
   r(X,t) = q(Y,\tau),
\end{equation}
where 
\begin{equation}
   Y = X-\sqrt{gh}t + \frac{\varepsilon^{3/2}}{2h}
      (gh)^{1/4} \sigma_\beta B(t)
    + \varepsilon^2 a_{KdV} \sqrt{gh} t,
       \quad {\rm{and}} \quad \tau=\varepsilon^2 t. 
\end{equation}
Compute the expectation of the main component of the solution $r$ : 
\begin{equation}
\begin{array}{l}
   {\sf E} (r(X,t)) =\displaystyle{\int_{-\infty}^{\infty}}
     q( X-\sqrt{gh}t + \frac{\varepsilon^{3/2}}{2h} \sigma_\beta (gh)^{1/4} u
     + \varepsilon^2 a_{KdV} \sqrt{gh} t, \tau) d\mu_{B(t)} (u)
     \\
     \noalign{\vskip7pt}
   \qquad = \displaystyle{
     \frac{1}{\sqrt{2\pi t }}\int_{-\infty}^\infty} 
     q( X-\sqrt{gh}t + \frac{\varepsilon^{3/2}}{2h}\sigma_\beta
     (gh)^{\frac14} u
   + \varepsilon^2 a_{KdV} \sqrt{gh}t, \tau) \text{e}^{-\frac{u^2}{2t}} du.
\end{array}
\end{equation}
Assuming that $\max_\tau  |q(.,\tau)|_{L^1} < \infty$, we have
for fixed t, 
\begin{eqnarray} 
   \max_X {\sf E} (r(X,t)) \le \max_{X'}
      \frac{1}{\sqrt{2\pi t }}\displaystyle{\int}_
     {-\infty}^{\infty} |q( X' 
   + \frac{\varepsilon^{3/2}}{2h} \sigma_\beta (gh)^{\frac14} u, \tau)| du    
            \nonumber \\
   \qquad \qquad \le  \frac{2h\varepsilon^{-\frac32}}{\sqrt{2\pi t}}
     (gh)^{-\frac14} \int_{-\infty}^{\infty} |q(v,\tau)| dv.
\end{eqnarray}
This time decay of order $\varepsilon^{-3/2} t^{-1/2} =
(\varepsilon \tau)^{-1/2} $ shows that the diffusion coefficient is
of order $\BigOh{\varepsilon}$, meaning that diffusion
effects 
occur at an order higher that the one considered for the
derivation of the KdV equation.
To observe diffusion created by random effect at the order of
the relevant terms for the KdV would require a scaling 
for the bottom variations of the form
$ -h+\sqrt{\varepsilon}\beta(x, \omega)$, which is a `rougher'
bottom that the one considered in this paper.
This is the natural scaling that was considered in the linear analysis
of  \cite{NS03}.
 However, such a hypothesis
also affects the nonlinear and dispersive nature of solutions and indeed it
will introduce additional terms in the nonlinear coupled system of equations
for $(r, s)$  that would have to be taken into account. 
This is beyond the scope of the present paper and is planned as the
focus of a subsequent study.



\section*{Acknowledgments}
 WC would like to thank
S. R. S. Varadhan for his suggestions at the beginning of this
project.
WC has been partially supported by  the Canada Research Chairs Program and 
NSERC through grant number 238452-01, 
ODE  by a CRC postdoctoral fellowship, 
PG by the University of Delaware Research Foundation and NSF through grant
 number DMS-0625931,
and CS by NSERC through grant number 46179-05.



\end{document}